\documentclass[11pt]{article}
\usepackage[letterpaper,margin=1in]{geometry}

\usepackage{amsmath}
\usepackage{amssymb}
\usepackage{mathtools}
\usepackage{tikz}
\usepackage{graphicx}
\usepackage{amsthm}
\usepackage{fullpage}

\usepackage[backgroundcolor=white, textwidth=2cm]{todonotes}

\theoremstyle{plain}
\newtheorem{thm}{Theorem}[section]
\newtheorem{cor}[thm]{Corollary}
\newtheorem{lem}[thm]{Lemma}
\newtheorem{obs}[thm]{Observation}

\theoremstyle{definition}
\newtheorem{dfn}[thm]{Definition}
\newtheorem{exm}[thm]{Example}

\newcommand{\versone}{\textup{I}}
\newcommand{\verstwo}{\textup{II}}


\usepackage[colorlinks]{hyperref}
\definecolor{lightblue}{rgb}{0.5,0.5,1.0}
\definecolor{darkred}{rgb}{0.5,0,0}
\definecolor{darkgreen}{rgb}{0,0.5,0}
\definecolor{darkblue}{rgb}{0,0,0.5}

\hypersetup{colorlinks,linkcolor=darkred,filecolor=darkgreen,urlcolor=darkred,citecolor=darkblue}

\begin{document}
\title{A Systematic Study of Isomorphism Invariants of Finite Groups via the Weisfeiler-Leman Dimension
}
\author{Jendrik Brachter and Pascal Schweitzer}

\date{\today}
\maketitle
\begin{abstract}
We investigate the relationship between various isomorphism invariants for finite groups.
Specifically, we use the Weisfeiler-Leman dimension (WL) to characterize, compare and quantify the effectiveness and complexity of invariants for group isomorphism.

It turns out that a surprising number of invariants and characteristic subgroups that are classic to group theory can be detected and identified by a low dimensional Weisfeiler-Leman algorithm. These include the center, the inner automorphism group, the commutator subgroup and the derived series, the abelian radical, the solvable radical, the Fitting group and $\pi$-radicals.
A low dimensional WL algorithm additionally determines the isomorphism type of the socle as well as the factors in the derived series and the upper and lower central series.

We also analyze the behavior of the WL algorithm for group extensions and prove that a low dimensional WL algorithm determines the isomorphism types of the composition factors of a group.

Finally we develop a new tool to define a canonical maximal central decomposition for groups. This allows us to show that the Weisfeiler-Leman dimension of a group is at most one larger than the dimensions of its direct indecomposable factors. In other words the Weisfeiler-Leman dimension increases by at most 1 when taking direct products.
\end{abstract}

\section{Introduction}
Tasks of classifying finite groups up to isomorphism and generating particular classes of finite groups are fundamental and recurring themes in computational group theory. 
Yet, in particular the computational complexity of such problems remains most illusive to date. 

For example, for most orders up to 20.000 the number of non-isomorphic finite groups has been computed and the groups have been exhaustively generated~\cite{Eick2017}. But there are currently 38
notoriously difficult, exceptional cases, for which this information is beyond our current means~(see \cite{Eick2017}). The varying difficulty across different orders is in part caused by the erratic fluctuation of the number of isomorphism classes of finite groups as the order increases. 
This number appears to be closely linked to the multiplicities of the prime factors of the respective order, but even estimating the number of groups of a given order is non-trivial.

 Generation tasks for classes of groups have a long tradition dating back to Cayley~\cite{doi:10.1080/14786445408647421}. Nowadays, there is extensive work on generating particular classes of groups. For example 
there are practically efficient algorithms for the generation of finite nilpotent or finite solvable groups~\cite{MR3197178}. However, the algorithms come without efficient running time guarantees. 

One of the difficulties for a complexity analysis stems from the group isomorphism problem. Indeed, the group isomorphism problem for finite groups stays among the few standard tasks in computational group theory with uncertain complexity. In principle, we desire algorithms with an efficient worst case running time measured in the number of generators through which the groups are given. However, we do not even have algorithms with an efficient worst case running time when measured in the order of the group. In fact the only improvement for the worst case complexity over Tarjan's classic~$n^{\log(n)+O(1)}$ algorithm are~$n^{\frac{1}{c}\cdot \log(n)+O(1)}$ algorithms with a small constant~$c$ depending on the model of computation (randomization, quantum computing etc.) \cite{DBLP:journals/corr/Luks15,DBLP:journals/corr/abs-1304-3935,DBLP:conf/soda/Rosenbaum13}. There is however a nearly-linear time algorithm that solves group isomorphism for most orders~\cite{DietrichWilson}. 

A closely related problem is that of computing isomorphism invariants to distinguish groups. 
Efficiently computable complete invariants are sufficient for general isomorphism testing. However, we do not know efficiently computable complete invariants even for very special cases, such as nilpotent~$p$-groups of class 2. Partial invariants only give incomplete isomorphism tests, but they still find application in generation tasks allowing for heuristic fast pruning~\cite{Eick2017}. 
Given the long history of (algorithmic) group theory, there is an abundance of partial invariants.

Generally the techniques involved in generation and isomorphism computations exploit the existence of various characteristic subgroups classic to group theory. As outlined in~\cite{Eick2017}, these include exploiting the Frattini subgroup~$\Phi(G)$~\cite{DBLP:journals/jsc/BescheE99}, the exponent-$p$-central series~\cite{DBLP:journals/jsc/OBrien90}, characteristic series~\cite{SmithThesis} and similar.

Overall, many of the techniques currently in use are ad-hoc, focused on practical performance, and do not lead to efficient worst case upper bounds for the complexity of the algorithmic problems.  As a consequence, the general picture for finite groups is somewhat chaotic. There is often no structured way of comparing or combining invariants for group isomorphism. E.g., two given invariants may be incomparable in their distinguishing power, making it unclear which invariant to use. Also the required time to evaluate an invariant may be difficult to estimate and can depend significantly on the input group. Even when we are given a class of efficiently computable invariants, it will generally be unclear which invariants to choose or how to efficiently combine their evaluation algorithmically. 

In Summary, we lack the formal means to characterize, compare, or quantify the effectiveness and complexity of invariants for group isomorphism. We therefore propose a systematic study of computationally tractable invariants for finite groups.

For inspiration on how to systematize such a study, we turn to algorithmic finite model theory and specifically descriptive complexity theory. This allows us to characterize the complexity of an invariant by considering a formula within a logic that captures the invariant. A natural choice for a logic from which to choose the formulas is the powerful fixed point logic with counting. Not only can this logic express all polynomial time computable languages on ordered structures~\cite{DBLP:journals/iandc/Immerman86,DBLP:conf/stoc/Vardi82}, but in the context of graphs it has also proven to be an effective tool in comparing invariants (see~\cite{KieferThesis}). As a measure for the complexity of an invariant we can then use the number of variables required to express the invariant in fixed point logic with counting. Crucially there is a corresponding algorithm, the~$k$-dimensional Weisfeiler-Leman algorithm (WL), that (implicitly) simultaneously evaluates  all invariants that are expressible by formulas requiring at most~$k+1$ variables in polynomial time\footnote{For groups there are actually two natural closely related versions of the logic and of the algorithm, $k$-WL$_\versone$ and $k$-WL$_\verstwo$, see Section~\ref{SecTechnicalBasics}.}.

Thus, to enable a quantification and comparison of the complexity of invariants we suggest the Weisfeiler-Leman algorithm. More specifically we suggest to use the Weisfeiler-Leman dimension, which determines how many variables are required to express a given invariant as a formula. 
This gives us a natural and robust framework for studying group invariants. In fact, the~$k$-dimensional Weisfeiler-Leman algorithm is universal for all invariants of the corresponding dimension, resolving the issue of how to combine invariants.  With this approach we also include an abundance of invariants that have not been considered before.
However, it is a priory not clear at all that commonly used invariants can even be captured by the framework, i.e., that they even have bounded WL-dimension.

\paragraph{Contribution}
The first contribution of this paper is to show that a surprising number of isomorphism invariants and subgroups that are classic to group theory can be detected and identified by a low dimensional Weisfeiler-Leman algorithm.
 
Specifically, we show first that for a small value of~$k$, groups not distinguished by~$k$-WL$_\verstwo$ have centers ($k\geq 2$),
inner automorphism groups ($k\geq 4$), derived series~($k\geq 3$), abelian radicals ($k\geq 3$), solvable radicals ($k\geq 2$), fitting groups ($k\geq 3$) and $\pi$-radicals ($k\geq 3$) that are indistinguishable by~$k$-WL$_\verstwo$.
They also have isomorphic socles ($k\geq 5$), stepwise isomorphic factors in the derived series ($k\geq 4$), upper central series ($k\geq 4$), and lower central series ($k\geq 4$). Our techniques regarding characteristic subgroups are fairly general. We thus expect them to be applicable to a large variety of other isomorphism invariants. In particular they should facilitate the analysis of combinations of invariants one might be interested in (such as the Fitting series or the hypercenter).

Beyond these characteristic subgroups, in our second contribution we show that composition factors are incorporated in the invariant computed by a Weisfeiler-Leman algorithm of bounded dimension, in the following sense.

\begin{thm}
 If $k\geq 5$ and $G$ is indistinguishable from $H$ via $k$-WL$_\versone$, then $G$ and $H$ have the same (isomorphism types of) composition factors (with multiplicities).
\end{thm}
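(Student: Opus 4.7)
The plan is to proceed by induction on $|G|$, peeling off at each step a characteristic subgroup whose composition factors can be read off directly. A natural choice is the socle $\operatorname{Soc}(G)$: the preceding result of the paper already provides the key input, namely that at $k\geq 5$, WL$_\versone$ determines the isomorphism type of $\operatorname{Soc}(G)$. Moreover, since membership in $\operatorname{Soc}(G)$ is WL-definable (it is the join of all minimal normal subgroups, and minimality is preserved under WL-indistinguishability), the socle is identified as a characteristic subgroup, not merely as an abstract isomorphism type.

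From the isomorphism type of $\operatorname{Soc}(G)$ one directly reads off its composition factors: $\operatorname{Soc}(G)$ decomposes as a direct product of characteristically simple groups, each of which is a direct power $T^{m}$ of a simple group $T$, and the composition factors of $\operatorname{Soc}(G)$ are exactly these $T$'s counted with the appropriate multiplicity.

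The inductive step then hinges on a quotient stability statement: if $G$ and $H$ are indistinguishable by $k$-WL$_\versone$ and their socles have been correspondingly marked by the WL coloring, then $G/\operatorname{Soc}(G)$ and $H/\operatorname{Soc}(H)$ are also indistinguishable by $k$-WL$_\versone$. Granting this, the inductive hypothesis applied to the strictly smaller pair $(G/\operatorname{Soc}(G),H/\operatorname{Soc}(H))$ yields agreement of their composition factor multisets. By Jordan--H\"older, joining these with the composition factors of $\operatorname{Soc}(G)$ (respectively $\operatorname{Soc}(H)$) produces the full composition factor multiset of $G$ (respectively $H$), and these then agree.

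The main obstacle is the quotient stability lemma. WL operates on the full structure of $G$, and to transport its indistinguishability to the quotient $G/\operatorname{Soc}(G)$ one must show that the refinement performed on $G$ descends compatibly to the quotient. I would approach this via a pebble-game argument: a winning strategy for Duplicator on the quotient pair can be lifted from a strategy on $(G,H)$ by tracking coset representatives of the socle and exploiting that $\operatorname{Soc}(G)$ and $\operatorname{Soc}(H)$ are already ``pinned down'' by the WL coloring. This interpretation of the quotient inside the original group is the technical core of the argument, and the extra pebbles (raising $k$ to $5$) are likely consumed precisely to capture socle membership and coset structure simultaneously.
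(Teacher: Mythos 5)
Your proposal matches the paper's proof essentially step for step: detect the socle, read off its composition factors from its identified isomorphism type (as a direct product of simple groups), pass to the quotients via the quotient-stability result (Theorem~\ref{MainThm1}, itself proved by the coset-tracking pebble-game argument of Lemma~\ref{QuotientColoringLem} that you sketch), and conclude by induction and Jordan--H\"older. The only cosmetic difference is that the paper phrases the lifting contrapositively (lifting a Spoiler win from the quotient pair to $(G,H)$), but the content is the same.
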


The theorem shows that the WL algorithm, which is a purely combinatorial algorithm, can compute group theoretic invariants that do not even appear as a canonical subset of the group. In particular, the composition factors cannot be localized within the group, and at first sight it might not be clear that WL grasps quotient groups.

Our third contribution, having the most technical proof and building on our other results, regards direct products of groups.
Here we consider the decomposition of a group into direct factors. We show that direct products indistinguishable by~$k$-WL must arise from factors that are indistinguishable by~$(k-1)$-WL.

\begin{thm}
	Let $G=G_1\times\dots\times G_d$ be a direct product and $k\geq 5$. If $G$ and~$H$ are not distinguished by~$k$-WL$_\verstwo$ then there are
		direct factors $H_i\leq H$ such that $H=H_1\times\dots\times H_d$ and such that for all~$i$ the groups $G_i$ and~$H_i$ are not distinguished by~$(k-1)$-WL$_\verstwo$.
\end{thm}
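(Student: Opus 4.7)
The plan is to combine the canonical maximal central decomposition announced in the abstract with a pebble-game argument for the WL algorithm. I proceed in four steps.

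First, I reduce to the case where each factor $G_i$ is directly indecomposable. By the Krull--Schmidt theorem, any direct decomposition of a finite group refines, up to isomorphism, to a unique decomposition into directly indecomposable factors. Hence it suffices to prove the theorem for decompositions into indecomposables: once matching indecomposable factors of $H$ are identified, we can re-group them to match the coarser decomposition $G = G_1 \times \cdots \times G_d$.

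Second, I invoke the canonical maximal central decomposition, the new tool introduced in the paper. For a direct product $G = A \times B$ one has $Z(G) = Z(A) \times Z(B)$ and $[G,G] = [A,A] \times [B,B]$; more generally, any direct decomposition induces compatible decompositions of the center and of other characteristic subgroups. The canonical maximal central decomposition exploits these compatibilities to produce a direct decomposition that is canonical as a set of subgroups, not merely unique up to isomorphism. Combined with the earlier results of the paper --- in particular that the center, the commutator subgroup, and the derived series are identifiable by $k$-WL$_\verstwo$ for small $k$ --- this decomposition is detectable by $k$-WL$_\verstwo$ when $k\geq 5$.

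Third, I transport the decomposition to $H$. Since $G$ and $H$ are not distinguished by $k$-WL$_\verstwo$, the canonical decomposition of $G$ has a counterpart in $H$, yielding candidate factors $H_1,\dots,H_d$ with $H=H_1\times\cdots\times H_d$. The matching preserves the canonical labels produced by $k$-WL$_\verstwo$, so that $G_i$ and $H_i$ become the ``same'' components of their respective canonical decompositions. Finally, to establish the dimension drop, I use the bijective pebble game characterizing $(k-1)$-WL$_\verstwo$: the strategy spends one of the $k$ available pebbles to single out the factor under consideration (by placing it on a canonical witness of the decomposition, e.g.\ a central generator marking $G_i$ within $G$), leaving $k-1$ pebbles to play the game on $G_i$ versus $H_i$. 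Duplicator's winning strategy on $(G,H)$ restricts to a winning strategy for Duplicator on $(G_i,H_i)$ with one fewer pebble, giving the $(k-1)$-WL$_\verstwo$ indistinguishability.

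The main obstacle is the second step: constructing the canonical maximal central decomposition and showing that it is definable within the $k$-variable fragment of counting logic. Canonicity \emph{as subgroups}, rather than merely up-to-isomorphism uniqueness as in Krull--Schmidt, is essential because the bijective pebble game can only preserve structures that are invariant under the arbitrary choices appearing in the latter. This is where the bulk of the technical work of the theorem will reside, and also where the threshold $k\geq 5$ enters, inherited from the detection bounds for the relevant characteristic subgroups.
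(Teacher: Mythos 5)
There is a genuine gap, and it sits exactly where you locate ``the bulk of the technical work'': your second step asserts that the canonical maximal central decomposition is ``a direct decomposition that is canonical as a set of subgroups.'' No such object exists in general, and this impossibility is the central difficulty the paper has to circumvent. What Krull--Remak--Schmidt gives (Lemma~\ref{lem:uniquenes:of:direct:factors}) is only that the collection $\{G_iZ(G)\}_i$ is canonical; the factors $G_i$ themselves can be perturbed by central elements, so a group element cannot in general be assigned to a direct factor in a well-defined way, and hence WL cannot detect the $G_i$ as subsets. The paper's canonical object is a \emph{central} decomposition into non-abelian components $N_1\cdots N_m$ (built from centralizer orders and the non-commuting graph, Lemma~\ref{PropertiesNonAbelianComponents}), from which one only recovers the subgroups $G_iZ(G)$, not the $G_i$. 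Consequently your third step also fails as stated: there is no ``canonical witness of the decomposition'' (no central generator marking $G_i$) on which Spoiler could park a pebble, so the claimed restriction of Duplicator's strategy to $(G_i,H_i)$ with one fewer pebble does not go through.

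The missing content is the passage from $G_iZ(G)\equiv^{\verstwo}_k H_iZ(H)$ to $G_i\equiv^{\verstwo}_{k-1}H_i$, i.e., stripping off the (maximal) abelian direct factor. The paper does this via the semi-abelian case (Lemma~\ref{lem:semi-abelian-case}): it constructs a WL-detectable \emph{component-wise filtration} of the center with respect to the splitting $H\times A$ (Definition~\ref{dfn:component:wise:filt}, Lemma~\ref{lem:iso_type_max_abelian_factor}, built from the detectable sets of splitting elements and of central $p$-th powers), and then invokes Lemma~\ref{ResepctSubgroupChains} to trade one pebble pair for the guarantee that all of Duplicator's bijections respect the cosets of every subgroup in the chain. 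Only with that coset-compatibility can one show Duplicator must map $H\times\{1\}$ to a transversal of $\{1\}\times\hat A$ and hence restrict the game to the non-abelian parts; this is where the dimension drops from $k$ to $k-1$, not from marking a factor with a pebble. Your first step (reducing to indecomposable factors and regrouping) and your use of the non-commuting-graph components to transport the decomposition to $H$ are consistent with the paper, but without the filtration machinery the proof of the key dimension-drop step is not there.
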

In other words, the Weisfeiler-Leman dimension increases by at most 1 when taking direct products. The main difficulty here is that decompositions into direct products are not unique, and thus not definable. These complications arise mainly due to central elements. However we manage to define a canonical maximal central decomposition, that is generally finer than a decomposition into direct factors. We then show that this canonical decomposition is implicitly computed by the WL algorithm.

One way of interpreting our results is that the Weisfeiler-Leman algorithm comprises a unified way of computing all the mentioned invariants and characteristics simultaneously. The dimension can therefore be used to compare the complexity of invariants.

\paragraph{Techniques} To show the various results on characteristic subgroups, we prove a general result on group expressions. It essentially shows that subsets that can be defined by equation systems can be detected by~$k$-WL (see Lemma~\ref{LemGroupExpression}).

The result on composition factors involves a technique that relates~$k$-WL distinguishability of groups to detectable normal subgroups and detectable quotients (Theorem~\ref{MainThm1}).

To deal with direct products, we extend the technique to simultaneously relate chains of subgroups in two indistinguishable groups (Lemma~\ref{ResepctSubgroupChains}). Here we exploit well-known connections of pebble games to Weisfeiler-Leman algorithms. However, the main difficulty regarding our result on decompositions into direct factors is that such decompositions are not unique. In fact in general, a group element cannot be assigned to a direct factor in a well defined sense, making it impossible for WL to detect direct factors. 
For this purpose we develop a new technical tool, component-wise filtrations~(Definition~\ref{dfn:component:wise:filt}), which compensate for the non-uniqueness to extract at least the isomorphism type of the direct factors (Lemma~\ref{lem:semi-abelian-case}). We also exploit the non-commuting graph of the group and show that certain subsets, which we call non-abelian components, can be detected by~$k$-WL (Lemma~\ref{PropertiesNonAbelianComponents}). These non-abelian components lead to a WL-definable maximal central decomposition of every finite group.

\paragraph{Outline} Section~\ref{SecPrelim} provides preliminaries. Section~\ref{SecTechnicalBasics} treats 
WL-refinement in the context of colored groups. 
In Section~\ref{SecWLvsGroupStructure}, we show that invariants generated via WL-refinement fulfill group theoretic closure properties. Section~\ref{SecSpecificInvariants} is an extensive collection of specific structure properties and invariants which Weisfeiler-Leman algorithms detect in finite groups. Finally, in Section~\ref{SecDirectDecompositions} we investigate the ability of WL-refinement to detect direct product decompositions, building on the results of the previous sections.

\paragraph{Further related work}
We should point out that there are various results in the literature on decomposing groups into indecomposable direct factors for various input models of groups. For example there is a polynomial time algorithm to decompose permutation groups into direct products~\cite{WilsonDirect}. Finally, there is a recent algorithm that finds direct product decompositions of permutation groups with factors having disjoint support~\cite{CHANG20221}.
There is also a polynomial time algorithm that  computes direct factors efficiently for groups given by multiplication table~\cite{DBLP:conf/icalp/KayalN09}. Aspects of this algorithm are related to arguments we use for studying the behavior of WL on direct products (see the beginning of Section~\ref{SecDirectDecompositions} for a discussion).

Regarding group isomorphism problems, for isomorphism of Abelian groups a linear time algorithm is known~\cite{DBLP:journals/jcss/Kavitha07} and there are near linear time algorithms for some 
classes of non-abelian groups (e.g,~\cite{DBLP:conf/csr/DasS19}).
Recent directions relate group isomorphism to tensor problems~\cite{DBLP:conf/innovations/GrochowQ21}. The Weisfeiler-Leman algorithm has also been incorporated as a subroutine within other sophisticated group isomorphism algorithms~\cite{DBLP:journals/corr/abs-1905-02518}.

Regarding Weisfeiler-Leman algorithms, the literature is somewhat limited when it comes to groups~\cite{WLonGroups,DBLP:journals/corr/abs-1905-02518} but quite extensive when it comes to graphs. In~\cite{DBLP:journals/jcss/ArvindFKV20}, for example the authors investigate some graph invariants that are captured by the Weisfeiler-Leman algorithm. We refer to~\cite{KieferThesis} for an introduction and an extensive overview over recent results for WL on graphs.

\newpage
 
\section{Preliminaries}\label{SecPrelim}

\paragraph{Sets \& Partitions}
Maximal or minimal sets are always considered with respect to inclusion. We denote multisets as $\{\!\{\dots\}\!\}$. Given disjoint sets $M$ and $N$, their union is $M\uplus N$. An \textit{equipartition} $\mathcal{P}=\{M_1,\dots M_n\}$ of a set $M$ is a partition $M=M_1\uplus\dots\uplus M_n$ such that $|M_i|=|M_1|$ for all $1\leq i\leq n$. A \textit{system of representatives modulo $P$}
is a subset $R\subseteq M$ such that for all $r,s\in R$ and for all $i$ it holds that $(r\in M_i \wedge s\in M_i)\Longrightarrow r=s$. $R$ is \textit{full} if $R$ is a maximal system of representatives. We refer to the $m$-th Cartesian power of $M$ as $M^{(m)}$.

\paragraph{Graphs}
\textit{Graphs} are assumed to be \textit{undirected, simple} and \textit{finite}. We use $V(\Gamma)$ and $E(\Gamma)$ to refer to the vertices or edges of a graph $\Gamma$. For a subset $S\subseteq V(\Gamma)$, let $\Gamma[S]$ denote the subgraph induced by the set $S$. A graph $\Gamma$ is called \textit{bipartite} if we can write $V(\Gamma)=L\uplus R$ such that there are no edges in $\Gamma[L]$ or $\Gamma[R]$. A \textit{matching} on a graph is a collection of disjoint edges. A matching is \textit{perfect} if it covers all vertices. 

\paragraph{Groups}
Groups are assumed to be finite. The symmetric group on $m$ symbols is denoted by $S_m$.
The order of a group element $g\in G$ is the order of the group generated by $g$, i.e., $|g|:=|\langle g\rangle|$. Given a finite set of primes $\pi$, a \textit{$\pi$-group} is a group whose order is only divisible by primes in $\pi$. A group element is called a \textit{$\pi$-element} if it generates a $\pi$-group. For any $d\in\mathbb{Z}$, let $(G)^d:=\langle\{g^d\mid g\in G\}\rangle$.

For a group $G$ and $g,h\in G$, we define the \textit{commutator} $[g,h]:=ghg^{-1}h^{-1}$. We abbreviate the conjugation action to $g^h:=hgh^{-1}$. If $M,N\subseteq G$ we set $[M,N]:=\langle [m,n]\mid m\in M, n\in N\rangle$ and in the special case $M=N=G$ we write $G':=[G,G]$ for the \textit{derived subgroup} of $G$.

Given $m$-tuples of group elements $\bar{g}:=(g_1,\dots,g_m)\in G^{(m)},\bar{h}:=(h_1,\dots,h_m)\in H^{(m)}$, we say 
$\bar{g}$ and $\bar{h}$ have the same \textit{ordered isomorphism type} if there is a group isomorphism
$\varphi:\langle\bar{g}\rangle\to\langle\bar{h}\rangle$ with $\varphi(g_i)=h_i$ for all $1\leq i\leq m$.

\section{Colored Groups \& Weisfeiler-Leman Algorithms}\label{SecTechnicalBasics}
We recapitulate various notions regarding WL-algorithms on groups. For WL on graphs we refer to~\cite{KieferThesis}. For uncolored groups, versions of WL were defined in \cite{WLonGroups}. For our purpose however, we need to formally generalize the concepts to the setting of colored groups. Let us point out that in the setting of colored graphs, colors can be replaced by gadget constructions to obtain uncolored graphs while maintaining the combinatorial properties of the structure. However, for groups it is unclear how to do this. Nevertheless, we will still use colors on groups to restrict the set of possible automorphisms.

\subsection{Colorings on Finite Groups}
Given a natural number $k$ and a finite group $G$, a ($k$-)\textbf{coloring} (over $G$) is just a map $\gamma: G^{(k)}\to\mathcal{C}$ where $\mathcal{C}$ denotes some finite set of colors. A $k$-coloring $\gamma$ partitions $G^{(k)}$ into \textbf{color classes}. We refer to $1$-colorings as \textbf{element-colorings}.

The range $\mathcal{C}$ of target colors is often omitted. Considering two natural numbers $m<k$, a \textbf{$k$-coloring} $\gamma:G^{(k)}\to\mathcal{C}$ induces
an $m$-coloring $\gamma^{(m)}:G^{(m)}\to\mathcal{C}$ via $\gamma^{(m)}((g_1,\dots,g_m)):=\gamma((g_1,\dots,g_m,1,\dots,1))$. To keep our notation simpler we may write $\gamma$ again instead of $\gamma^{(m)}$ and instead of $\gamma^{(1)}$ we use $\gamma^{(G)}$ to emphasize that the coloring is pulled back to group elements.

\begin{dfn}
	A \textbf{colored group} is a group $G$ together with an element-coloring
	$\gamma$ over $G$. Colored groups $(G,\gamma_G)$ and $(H,\gamma_H)$ are \textbf{isomorphic} if there is a
	group isomorphism $\varphi :G\to H$ that respects colors, i.e., $\gamma_H\circ\varphi=\gamma_G$.
\end{dfn}

Given a colored group $(G,\gamma)$ we set Aut$_\gamma(G):=\{ \varphi\in\text{Aut}(G) | \gamma\circ\varphi=\gamma \}$.
\begin{dfn}
	Let $(G,\gamma)$ be a colored group. We say $M\subseteq G$ is \textbf{$\gamma$-induced} if it holds that $\gamma(M)\cap\gamma (G\setminus M)=\emptyset$, i.e., $M$ is a union of $\gamma$-color classes.
\end{dfn}

\subsection{Weisfeiler-Leman Refinement on Colored Groups}
In \cite{WLonGroups}, we introduced three versions of Weisfeiler-Leman algorithms on groups. For the present work it is sufficient to consider two of these versions. The relevant definitions and results are discussed below but we refer to \cite{WLonGroups} for more details.

For $k\geq 2$ we devise a \textbf{Weisfeiler-Leman algorithm of dimension $k$ ($k$-WL)} that takes as input a colored group $(G,\gamma)$ and computes an Aut$_{\gamma}(G)$-invariant coloring on $G^{(k)}$. The algorithm computes an initial coloring from isomorphism invariant properties of $k$-tuples and then iteratively refines color classes until the process stabilizes. The \textbf{stable colorings} arising from~$k$-WL provide (possibly incomplete) polynomial-time non-isomorphism tests.
\vspace{10pt}\newline\underline{Version {\versone} ($k$-WL$_{\versone}$):}
\newline The initial coloring $\chi^{\versone, k}_{\gamma,0}$ is defined via the group's multiplication relation while also taking into account element-colors. Two tuples $\bar{g}:=(g_1,\dots,g_k)$ and $\bar{h}:=(h_1,\dots,h_k)$ obtain the same initial color if and only if for all indices $i,j$ and $m$ between $1$ and $k$ it holds that
\begin{enumerate}
	\item[$\bullet$] $\gamma(g_i)=\gamma(h_i)$,
	\item[$\bullet$] $g_i=g_j\Longleftrightarrow h_i=h_j$,
	\item[$\bullet$] $g_ig_j=g_m\Longleftrightarrow h_ih_j=h_m$.
\end{enumerate} The subsequent refinements are defined iteratively via
\[	
	\chi^{\versone, k}_{\gamma,i+1}(\bar{g}):=\left( \chi^{\versone,k}_{\gamma,i}(\bar{g}), \mathcal{M}(\bar{g}) \right).
\]Here, $\mathcal{M}(\bar{g})$ is the multiset of $k$-tuples of colors given by
\[
	\mathcal{M}(\bar{g}):=\{\!\{ (\chi^{\versone,k}_{\gamma,i}(\bar{g}_{1\leftarrow x}),\dots,\chi^{\versone,k}_{\gamma,i}(\bar{g}_{k\leftarrow x})) \mid x\in G \}\!\},
\]where $\bar{g}_{j\leftarrow x}$ is obtained by replacing the $j$-th entry of $\bar{
g}$ by $x$. 
\vspace{10pt}\newline\underline{Version {\verstwo} ($k$-WL$_{{\verstwo}}$):}
\newline The initial coloring $\chi^{\verstwo,k}_{\gamma,0}$ is defined in terms of colored, ordered isomorphism of tuples. Thus, $\bar{g}=(g_1,\dots,g_k)$ and $\bar{h}=(h_1,\dots,h_k)$ obtain the same initial color if and only if there exists an isomorphism of colored subgroups
\[
	\varphi:\langle \bar{g}\rangle\to\langle \bar{h}\rangle
\]such that $\varphi(g_i)=h_i$ for all $i$. The refinement step is unchanged from Version {\versone}.

Since $G$ is finite, there is a smallest $i$ such that $\chi^{\versone,k}_{\gamma,i}$ and $\chi^{\versone,k}_{\gamma,i+1}$ induce the same color class partition on $G^{(k)}$. At this point color classes become stable
and we obtain the \textbf{stable coloring} $\chi^{\versone,k}_{\gamma}:=\chi^{\versone,k}_{\gamma,i}$. In the same way we define $\chi^{\verstwo,k}_{\gamma}$. For uncolored groups we write $\chi^{\versone,k}_{G}$ and $\chi^{\verstwo,k}_{G}$, respectively.

By definition, the initial colorings are invariant under isomorphisms that respect $\gamma$. This property then holds for the iterated colorings as well. In particular, whenever 
$(G,\gamma_G)$ and $(H,\gamma_H)$ are isomorphic as colored groups, there is a bijection $f:G\to H$ such that
$\chi^{\versone,k}_{\gamma_G}=\chi^{\versone,k}_{\gamma_H}\circ f$ (and the same holds for Version \verstwo). So we obtain a non-isomorphism test by comparing stable colorings computed by $k$-WL$_\versone$ or $k$-WL$_\verstwo$ as follows.

\begin{dfn}
	Let $(G,\gamma_G)$ and $(H,\gamma_H)$ be colored groups. We say $G$ is \textbf{distinguished} from $H$ by $k$-WL$_{\versone}$ if there is \textit{no} bijection $f:G^{(k)}\to H^{(k)}$ with $\chi^{\versone,k}_{\gamma_G}=\chi^{\versone,k}_{\gamma_H}\circ f$. We say $k$-WL$_{\versone}$ \textbf{identifies} $G$ if it distinguishes $G$ from all other (non-isomorphic) groups. We write $G\equiv^{\versone}_k H$ to indicate that $G$ and $H$ are not distinguished by $k$-WL$_{\versone}$. 
	Furthermore, for $m\leq k$, tuples of group elements $\bar{g}\in G^{(m)}$ and $\bar{h}\in H^{(m)}$ are \textbf{distinguished} by $k$-WL$_\versone$ if they obtain different colors in the respective induced $m$-colorings $(\chi^{\versone,k}_{\gamma_G})^{(m)}$ and $(\chi^{\versone,k}_{\gamma_H})^{(m)}$. All definitions also apply to Version $\verstwo$ in the obvious way.	
\end{dfn}

The two versions of Weisfeiler-Leman refinement as introduced above are closely related and we will switch between them whenever convenient.
\begin{lem}{(see \cite{WLonGroups}, Theorem 3.5)}\label{CompareVersions}
	Let $(G,\gamma_G)$ and $(H,\gamma_H)$ be colored groups.	
	\begin{enumerate}
		\item Consider $\bar{g}\in G^{(m)},\bar{h}\in H^{(m)}$ and $k\geq m$. 
		If $\bar{g}$ is distinguished from $\bar{h}$ by $k$-WL$_{\versone}$ then 
		$\bar{g}$ is distinguished from $\bar{h}$ by $k$-WL$_{\verstwo}$. If $\bar{g}$ is distinguished from $				\bar{h}$ by $k$-WL$_{\verstwo}$ then $\bar{g}$ is distinguished from $\bar{h}$ by $(k+1)$-WL$_{\versone}$.
		\item  It holds
		\[
			(G,\gamma_G)\equiv^{\versone}_{k+1} (H,\gamma_H)\Longrightarrow
			(G,\gamma_G)\equiv^{\verstwo}_{k} (H,\gamma_H)\Longrightarrow
			(G,\gamma_G)\equiv^{\versone}_{k} (H,\gamma_H).
		\]
	\end{enumerate}
\end{lem}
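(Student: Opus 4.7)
The plan is to prove item 1 first and derive item 2 from it via a standard counting argument.

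For item 1(a), I would show that the initial $\verstwo$-coloring refines the initial $\versone$-coloring, and then lift this relation to the stable colorings by induction on the iteration count, using the fact that both versions share the same refinement operator. For the base case: given a colored ordered isomorphism $\varphi:\langle \bar{g}\rangle \to \langle \bar{h}\rangle$ with $\varphi(g_i)=h_i$ for all $i$, the three conditions of the $\versone$-initial coloring transfer immediately. Element-colors agree because $\varphi$ respects $\gamma$, equalities $g_i=g_j$ transfer because $\varphi$ is a bijection, and products $g_ig_j=g_m$ transfer because $\varphi$ is a homomorphism. The inductive step is immediate because the refinement step is a monotone function of its input coloring.

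For item 1(b), the plan is to establish the analogous statement: the coloring $\chi^{\versone,k+1}_{\gamma}$, restricted to $k$-tuples via $\bar{g}\mapsto (\bar{g},1)$, refines the initial $\verstwo$-coloring $\chi^{\verstwo,k}_{\gamma,0}$. The key intuition is that the extra $(k+1)$-st coordinate acts as a workspace: when the refinement step replaces position $k+1$ of a tuple $(\bar{g},1)$ by all elements $x \in G$, this effectively tests candidate values against the fixed generating tuple $\bar{g}$. I would prove by induction on word length $\ell$ that, after sufficiently many rounds of refinement, the $(k+1)$-tuple coloring records, for each $x \in G$, which length-$\ell$ words $w(\bar{g})$ are equal to $x$, and in particular which word equations hold in $\langle \bar{g}\rangle$. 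Combining this with item 1(a), the stable $(k+1)$-WL$_\versone$ coloring restricted to $k$-tuples refines the stable $k$-WL$_\verstwo$ coloring.

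For item 2, note that $(G,\gamma_G)\equiv^{\square}_k (H,\gamma_H)$ is equivalent to color class sizes in $G^{(k)}$ and $H^{(k)}$ agreeing under the respective stable colorings, and that a refinement relation between colorings propagates such equality downwards (each coarser class decomposes as a disjoint union of finer classes). Thus item 1(a) directly yields $\equiv^\verstwo_k\Rightarrow\equiv^\versone_k$. For $\equiv^\versone_{k+1}\Rightarrow\equiv^\verstwo_k$, I would additionally use that the identity element forms its own color class already at the initial $\versone$-coloring (detected by $x \cdot x = x$), so $(k+1)$-tuples with last entry equal to $1$ form a union of identifiable $(k+1)$-color classes; this lets equinumerosity at the $(k+1)$-tuple level pass to the induced $k$-tuple coloring, and then via item 1(b) to equinumerosity under $\chi^{\verstwo,k}_{\gamma}$. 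The main obstacle is item 1(b), specifically the induction showing that iterated $(k+1)$-WL$_\versone$ refinement reconstructs all equations satisfied in $\langle \bar{g}\rangle$, which is precisely what $\verstwo$ records from the start --- the slogan being that $\versone$ reaches the power of $\verstwo$ at the cost of one additional variable.
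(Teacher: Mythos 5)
Your proposal is correct and is essentially the standard argument: the paper itself gives no proof of this lemma, deferring to \cite{WLonGroups} with the remark that the colored case is identical to the uncolored one, and your combination of refinement-monotonicity for part 1(a) with a word-length induction for part 1(b) is exactly that proof (it is also the coloring-side counterpart of the pebble-game argument in Lemma~\ref{RespectPartialSubgroups}). The one step that needs care when writing up 1(b) is the inductive step of the word induction, since the target $y$ and an intermediate value $w(\bar{g})$ cannot both occupy the single workspace coordinate: one should use that the refinement multiset of $(\bar{g},y)$ pairs, for each $x\in G$, the color $\chi(\bar{g},x)$ (which by induction identifies whether $x=w(\bar{g})$) with the colors $\chi(\bar{g}_{j\leftarrow x},y)$ (whose initial-coloring part records whether $y=xg_i^{\pm1}$), which is precisely the invariant you state.
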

The proof is the same as for uncolored groups, see \cite{WLonGroups} for more details. Finally, we note that in \cite{WLonGroups}, we obtain a run time bound of $\mathcal{O}(|G|^{k+1}\log(|G|))$ for both versions of $k$-WL to compute the stable coloring on $G^{(k)}$. The same bound applies to colored groups. In particular, the initial coloring
of $k$-WL$_\verstwo$ is efficiently computable since we only have to compute isomorphism types of $k$-generated subgroups \emph{relative} to a fixed and ordered generating set of size $k$.

\subsection[Bijective k-Pebble Games]{Bijective $k$-Pebble Games}
As with graphs and uncolored groups, Weisfeiler-Leman algorithms on colored groups can be characterized via pebble games and this perspective provides useful tools for our proofs. The characterization closely follows the theory of WL-algorithms on graphs and the reader familiar with these concepts might want to skip to the next section.

For each $k\in\mathbb{N}$ and each version of $k$-WL as introduced above, there is a corresponding \textbf{bijective $k$-pebble game}.
\vspace{10pt}\newline\underline{Bijective 	$k$-pebble game:}
\newline The $k$-pebble game is played on a pair of colored groups $((G,\gamma_G),(H,\gamma_H))$ of equal orders by two players called Spoiler and Duplicator. There are $k$ pairs of pebbles $(p_1,p'_1),\dots,(p_k,p'_k)$ 
and pebbles from different pairs can be distinguished. A state of the game is called a \textbf{configuration} denoted by $[(g_1,\dots,g_k),(h_1,\dots, h_k)]$ with $g_i\in G\uplus\{\perp\}$ and $h_i\in H\uplus\{\perp\}$. The interpretation is that either $g_i\in G$ and $h_i\in H$ which means that the pebble $p_i$ is placed on $g_i$ while $p'_i$ is placed on $h_i$, or $g_i=h_i=\perp$ and then the $i$-th pebble pair is currently not on the board. If we do not specify an initial configuration the game starts on the \textbf{empty configuration} $[(\perp,\dots,\perp),(\perp,\dots,\perp)]$. One round of the game consists of three steps:
\begin{enumerate}
	\item Spoiler picks up a pebble pair $(p_i,p'_i)$.
	\item Duplicator chooses a bijection $f:G\to H$.
	\item Spoiler places $p_i$ on some $g\in G$ and $p'_i$ on $f(g)\in H$.
\end{enumerate}
In each round the winning condition is checked directly after Step $1$. The winning condition is the only difference between the two versions of the game and it is based on the initial coloring of the corresponding version of $(k-1)$-WL.
\newline\underline{Version~$\versone$:}
\newline The pebble pairs apart from $(p_i,p'_i)$ define $(k-1)$-tuples $\widehat{g}$ and $\widehat{h}$ over $G\uplus\{\perp\}$ and $H\uplus\{\perp\}$, respectively.
Spoiler wins if $\chi_{\gamma_G,0}^{\versone,(k-1)}(\widehat{g})\neq \chi_{\gamma_H,0}^{\versone,(k-1)}(\widehat{h})$, where we require that there are no occurrences of $\perp$ in $\widehat{g}$ or $\widehat{h}$. Otherwise the game continues.
\newline\underline{Version~$\verstwo$:}
\newline In this case, Spoiler wins if $\chi_{\gamma_G,0}^{\verstwo,(k-1)}(\widehat{g})\neq \chi_{\gamma_H,0}^{\verstwo,(k-1)}(\widehat{h})$.

We say that Duplicator wins the game if Duplicator has a strategy to keep the game going ad infinitum.

The following correspondence between Weisfeiler-Leman refinement and pebble games is the same as in the uncolored case and  can be proved in complete analogy.
\begin{lem}{(see \cite{WLonGroups}, Theorem 3.2) }\label{AlgoVSGame}
	Let $J\in\{ {\versone},{\verstwo}\}$ and $k\geq 2$. Consider colored groups $(G,\gamma_G)$ and $(H,\gamma_H)$ with $k$-tuples $\bar{g}\in G^{(k)}$ and $\bar{h}\in H^{(k)}$. Then
	$\chi^{J,k}_{\gamma_G}(\bar{g})=\chi^{J,k}_{\gamma_H}(\bar{h})$ if and only if Spoiler has a winning strategy in the configuration $[(g_1,\dots,g_k,\perp),(h_1,\dots,h_k,\perp)]$ in the $(k+1)$-pebble game (Version $J$).  
\end{lem}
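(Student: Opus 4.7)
The plan is to prove the equivalence by induction on the refinement rounds of $k$-WL, establishing a round-by-round correspondence with the bijective $(k+1)$-pebble game. The key refined claim to prove by induction on $r\geq 0$ is: $\chi^{J,k}_{\gamma_G,r}(\bar g) = \chi^{J,k}_{\gamma_H,r}(\bar h)$ if and only if Spoiler lacks a strategy to win within $r$ rounds of the $(k+1)$-pebble game from the configuration $[(g_1,\dots,g_k,\perp),(h_1,\dots,h_k,\perp)]$. Since $G$ and $H$ are finite, the coloring stabilizes after finitely many rounds $r^\ast$, so Spoiler's winning-strategy question in the infinite game reduces to the same question in the round-bounded game at level $r^\ast$; the lemma as stated then follows by passing to the contrapositive via the determinacy of the finitely-branching bijective game (from an $r^\ast$-safe configuration, Duplicator's bounded strategy extends to an infinite one because the level-$r^\ast$ coloring already equals the stable coloring).

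For the base case $r=0$, the initial coloring $\chi^{J,k}_{\gamma,0}$ is engineered precisely so that its values agree on $\bar g$ and $\bar h$ exactly when Spoiler cannot win by immediately picking up the free $(k+1)$-st pebble pair: for Version~$\versone$ both the coloring and the winning condition track element-colors $\gamma(g_i)$, coordinate equalities $g_i=g_j$, and multiplicative incidences $g_ig_j=g_m$ among the $k$ pebbled components; for Version~$\verstwo$ both track existence of an ordered colored isomorphism of the subgroups $\langle\bar g\rangle$ and $\langle\bar h\rangle$. For the inductive step, the refinement rule gives $\chi^{J,k}_{\gamma_G,r+1}(\bar g) = \chi^{J,k}_{\gamma_H,r+1}(\bar h)$ if and only if there is a bijection $f\colon G\to H$ such that $\chi^{J,k}_{\gamma_G,r}(\bar g_{j\leftarrow x}) = \chi^{J,k}_{\gamma_H,r}(\bar h_{j\leftarrow f(x)})$ for every coordinate $j$ and every $x\in G$. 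This is exactly the condition that Duplicator can commit to a single bijection $f$ that answers every possible pick-up by Spoiler of one of the original $k$ pebble pairs followed by every possible placement $x$, in such a way that the resulting configuration is by the inductive hypothesis still $r$-round safe.

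The main obstacle is the careful bookkeeping around the $(k+1)$-st pebble pair initially on $\perp$. One must verify that Spoiler picking up this extra pair implements the winning-condition check on the remaining $k$ pebbled components at dimension $k$, whereas Spoiler picking up one of the original $k$ pairs implements the coordinate-replacement operation $\bar g_{j\leftarrow x}$ appearing in the refinement. This discrepancy between the refinement dimension $k$ and the game dimension $k+1$ is the source of the ``$+1$'' in the statement and needs to be tracked carefully through both directions of the induction. Once this translation is made precise, both versions $\versone$ and $\verstwo$ are handled uniformly since they differ only in the initial coloring and the correspondingly matched winning condition, while the refinement step is identical. The argument is a direct adaptation of the uncolored-group analogue in~\cite{WLonGroups}, with the element-coloring $\gamma$ merely refining the base case without altering the inductive mechanism.
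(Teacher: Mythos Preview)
The paper offers no proof here, only a reference to~\cite{WLonGroups}; your round-by-round induction is exactly the standard argument behind that cited result, so the approach matches. Note that the biconditional as printed is reversed---equal stable colors correspond to Spoiler \emph{lacking} a winning strategy, as every later use of the lemma in the paper confirms---and your proposal tacitly proves this corrected version.
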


\subsection{Induced Colorings \& Refinements}
Before we can start to investigate the relationship between $k$-WL and properties of groups, we collect some useful observations on induced colorings and WL-refinement. The first lemma is well-known in the setting of graphs (or more generally for cellular algebras, see \cite[Theorem 6.1]{MR1674742}) and easily follows for groups as well.

\begin{lem}\label{DetectColorOfCoords}
	Let $J\in\{\versone,\verstwo\}$ and $k\geq 2$. Consider colored groups $(G,\gamma_G)$ and $(H,\gamma_H)$
	with $g_i\in G$ and $h_i\in H$ for $i=1,\dots,k$. Let $\pi\in S_k$. It holds that
	
	\begin{enumerate}
	\item $\chi^{J,k}_{\gamma_G}(g_1\dots,g_k)=\chi^{J,k}_{\gamma_H}(h_1,\dots,h_k)\Longleftrightarrow \chi^{J,k}_{\gamma_G}(g_{\pi(1)}\dots,g_{\pi(k)})=\chi^{J,k}_{\gamma_H}(h_{\pi(1)},\dots,h_{\pi(k)})$ and
	\item $\forall\, 1\leq i\leq k:\chi^{J,k}_{\gamma_G}(g_1\dots,g_k)=\chi^{J,k}_{\gamma_H}(h_1,\dots,h_k)\Longrightarrow \chi^{J,k}_{\gamma_G}(g_i)=\chi^{J,k}_{\gamma_H}(h_i)$.
	\end{enumerate}
\end{lem}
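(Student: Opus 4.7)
The plan is to prove part~(1) by induction on the refinement step, and part~(2) via a pebble-game argument using Lemma~\ref{AlgoVSGame}. Writing $\chi_t$ for $\chi^{J,k}_{\gamma,t}$, the base case of part~(1) is the permutation invariance of the initial coloring $\chi_0$: in Version~\versone, every defining condition (element colors, equality patterns, and multiplication patterns) is symmetric under renaming indices; in Version~\verstwo, any ordered isomorphism $\varphi:\langle\bar g\rangle\to\langle\bar h\rangle$ witnessing equality of initial colors for $\bar g$ and $\bar h$ also witnesses it for $\bar g^\pi$ and $\bar h^\pi$ via the same map. For the inductive step, the hypothesis states precisely that the assignment $\chi_t(\bar a)\mapsto \chi_t(\bar a^\pi)$ is both well-defined (forward direction) and injective (reverse direction) on colors. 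Hence the refinement multiset $\mathcal{M}_t(\bar g^\pi)$ is the image of $\mathcal{M}_t(\bar g)$ under a fixed bijective transformation on $C^k$ (componentwise recoloring composed with permutation of entries by $\pi$), and the same bijection relates $\mathcal{M}_t(\bar h^\pi)$ to $\mathcal{M}_t(\bar h)$. Consequently $\mathcal{M}_t(\bar g) = \mathcal{M}_t(\bar h)$ transfers in both directions to $\mathcal{M}_t(\bar g^\pi) = \mathcal{M}_t(\bar h^\pi)$, closing the induction.

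For part~(2), by part~(1) I may without loss of generality take $i=1$. By Lemma~\ref{AlgoVSGame} it suffices to show that a Duplicator winning strategy in the $(k+1)$-pebble game from $[(\bar g,\perp),(\bar h,\perp)]$ implies one from $[(g_1,1,\ldots,1,\perp),(h_1,1,\ldots,1,\perp)]$. I argue the contrapositive: assuming Spoiler has a winning strategy from the latter configuration, I build a winning strategy from the former by first transforming one configuration into the other over $k-1$ preparatory rounds. For each $j=2,\ldots,k$ in turn, Spoiler picks up pebble $j$, Duplicator responds with a bijection $f_j:G\to H$, and Spoiler places pebble $j$ on $(1, f_j(1))$. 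The identity is definable by the initial coloring in both versions: in Version~\versone via $x\cdot x = x$, and in Version~\verstwo because any ordered isomorphism must send $1$ to $1$. Hence if $f_j(1)\neq 1$, Spoiler wins in the very next round by picking up the idle pebble $k+1$ (still on $\perp$): the resulting $k$-tuples now differ in their initial colors, since the $G$-side has $1$ at position $j$ while the $H$-side does not. Duplicator is therefore forced to set $f_j(1)=1$ at each step, and after $k-1$ rounds the configuration is exactly $[(g_1,1,\ldots,1,\perp),(h_1,1,\ldots,1,\perp)]$, from which Spoiler applies the assumed winning strategy.

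The main technical subtlety is the bookkeeping around when the winning condition actually triggers in the pebble game, since the check fires only when no $\perp$ occurs among the non-picked pebbles. The strategy above exploits this by leaving pebble $k+1$ on $\perp$ throughout the transformation phase, so that picking it up at the critical moment activates the check on the already-placed $k$-tuple. Conceptually, the crux of both parts is that the initial coloring of either version captures enough group-theoretic structure (element colors, multiplication data, and generated-subgroup isomorphism type) to guarantee the claimed invariances already at step zero; refinement then merely preserves them.
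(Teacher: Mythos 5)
Your proof is correct. Note that the paper does not actually prove Lemma~\ref{DetectColorOfCoords}; it only remarks that the statement is well known for graphs and cellular algebras and ``easily follows for groups as well,'' so there is no in-paper argument to compare against. Your write-up supplies exactly the standard details that are being left to the reader: for part~(1), the induction on refinement rounds, where the key point is that the induction hypothesis makes $\chi_t(\bar a)\mapsto\chi_t(\bar a^\pi)$ a well-defined bijection on colors, so that $\mathcal{M}(\bar g^\pi)$ is the image of $\mathcal{M}(\bar g)$ under a fixed bijection of $\mathcal{C}^k$ (and likewise for $\bar h$); and for part~(2), the pebble-game padding argument in which Spoiler forces the identity onto the coordinates $2,\dots,k$, exploiting that $1$ is definable from the initial coloring in both versions and that the winning condition only fires once the idle pebble on $\perp$ is picked up. Both steps are handled correctly, including the coset of small bookkeeping issues (the identification $(\bar g^\pi)_{j\leftarrow x}=(\bar g_{\pi(j)\leftarrow x})^\pi$ is implicit but right, and the reduction to $i=1$ via part~(1) is legitimate). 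One remark: you read Lemma~\ref{AlgoVSGame} as ``equal stable colors iff Duplicator wins,'' which is the intended meaning and the one used throughout the rest of the paper, even though the lemma as printed says ``Spoiler has a winning strategy'' on the equality side; your usage is the correct one.
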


\begin{lem}\label{RespectPartialSubgroups}
	Consider the $k$-pebble game where $k\geq 4$ for Version~$\versone$~and $k\geq 3$ for Version~$\verstwo$ on a pair of groups $(G,H)$. Assume pebble pairs are placed on $(g_1,h_1),\dots,(g_n,h_n)$ where $g_i\in G$, $h_i\in H$ and $0\leq n\leq k-2$. If Duplicator chooses a bijection $f:G\to H$ such that $f(w(g_1,\dots,g_n))\neq w(h_1,\dots,h_n)$ for some word $w$ (allowing inverses), then Spoiler has a winning strategy. (In the case $n=0$ we still require $f(1)=1$).
\end{lem}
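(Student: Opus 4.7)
I plan to prove both versions by unfolding the bad word over several rounds, using induction on the word length $|w|$. WLOG take $w$ of minimum length among bad-word witnesses; minimality then forces $f$ to agree with the partial homomorphism $g_i\mapsto h_i$ on every strictly shorter word, giving in particular $f(g_i^{\pm 1})=h_i^{\pm 1}$ and $f(1_G)=1_H$ (the latter aligning with the parenthetical requirement for $n=0$). This reservoir of ``good'' anchor points will be used heavily in both versions.

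\textbf{Version~$\verstwo$.} Spoiler places the pebble he was about to place on $(w(g),f(w(g)))$, producing a configuration with $n+1\leq k-1$ placed pebble pairs and at least one free pebble remaining (since $k-n\geq 2$). In the next round Spoiler picks up a free pebble and the $(k-1)$-tuples $(g_1,\dots,g_n,w(g),\perp,\dots,\perp)$ and $(h_1,\dots,h_n,f(w(g)),\perp,\dots,\perp)$ are compared under the initial Version~$\verstwo$ coloring. Any colored ordered isomorphism of the generated subgroups matching pebble positions would send $w(g)\mapsto w(\varphi(g_1),\dots,\varphi(g_n))=w(h)$, contradicting $f(w(g))\neq w(h)$. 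No such isomorphism exists, so the colors differ and Spoiler wins.

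\textbf{Version~$\versone$.} Here the initial coloring only inspects single multiplications, so Spoiler must convert the ``global'' inconsistency of $w$ into a local three-element product relation. I proceed by induction on $|w|$. In the base cases $|w|\leq 2$ Spoiler places one or two new pebbles pinpointing the first location where $f$ deviates from the homomorphism $g_i\mapsto h_i$, uses additional rounds to fill any remaining free pebbles with placements on $(1_G,1_H)$ --- possible because, for each successive bijection that is not already bad for the empty word (in which case induction has already won), minimality forces $f_r(1_G)=1_H$ --- and then triggers the winning check on the now-full board via a direct equality ($g_i=g_i$ in $G$ versus distinct $H$-entries), via the multiplication $g_i\cdot g_i^{-1}=1_G$ for the inverse variant, or via $g_i\cdot g_j=w(g)$ when $|w|=2$. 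For $|w|\geq 3$ I split $w=w_1 w_2$ with $|w_1|\geq 2$ and $|w_2|\geq 1$; by minimality $f(w_1(g))=w_1(h)=:b_1$, so Spoiler consistently places a new pebble on $(w_1(g),b_1)$. The shifted word $\tilde w:=x_{n+1}\cdot w_2$ of length $|w_2|+1<|w|$ then evaluates to $w(g)$ in $G$ and $w(h)$ in $H$; whichever bijection Duplicator picks next is either bad for $\tilde w$ --- yielding an inductive win from the enlarged configuration --- or good on the endpoint $w(g)$ but then necessarily bad on some earlier partial product, which Spoiler exposes with a further placement.

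\textbf{Main obstacle.} The principal difficulty is Version~$\versone$'s pebble budget. The induction hypothesis must be formulated to tolerate the intermediate pebble placed during a recursive cut (momentarily raising the ``size'' of the configuration) while still leaving enough free pebbles to both fill the board (so that the $(k-1)$-tuple used for the winning check contains no $\perp$) and carry out the terminal three-element multiplication verification. The bound $n\leq k-2$ provides precisely the slack needed: two initially free pebbles suffice for one recursive cut plus the concluding witness, while extra rounds spent dumping onto $(1_G,1_H)$ top off the board without giving Duplicator an escape. Making this bookkeeping close --- and survive Duplicator's freedom to pick a fresh bijection every round --- is the delicate technical point; the minimality of $w$ and the consistently available filler $(1_G,1_H)$ are the main tools.
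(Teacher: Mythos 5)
Your Version~\verstwo{} argument is correct and is exactly the paper's: pebble $(w(g),f(w(g)))$, pick up a free pair, and no ordered isomorphism of the generated subgroups can exist. The problem is in your Version~\versone{} inductive step. There, Spoiler's first placement is on the \emph{consistent} pair $(w_1(g),w_1(h))$, so nothing of $f$'s inconsistency is committed to the board. In the next round Duplicator is free to abandon $f$ entirely and play a bijection $f_1$ that is good for every word over the pebbled elements (e.g.\ if $G=H$, the identity); then $f_1$ is good for $\tilde w$ \emph{and} good on all partial products, so your claimed dichotomy ``either bad for $\tilde w$, or good on the endpoint but then necessarily bad on some earlier partial product'' simply fails, and Spoiler's advantage has evaporated. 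The ``necessarily bad somewhere'' principle is only valid once the \emph{bad} value is pebbled: with a pebble on $(w(g),f(w(g)))$ and $f(w(g))\neq w(h)$, every future bijection respecting the pebbles is forced to violate some partial product of $w$, because the relation ``$w$ of the first $n$ pebbles equals the new pebble'' holds in $G$ but not in $H$. This is precisely the paper's opening move, and it is the step your strategy omits.

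A secondary, unresolved issue is the pebble bookkeeping you flag as the ``main obstacle.'' As written, each recursive cut introduces a fresh variable $x_{n+1},x_{n+2},\dots$ whose value must stay pebbled, so the effective $n$ grows with the recursion depth and the hypothesis $n\leq k-2$ of the induction is violated already at the second level when $n=k-2$; ``two free pebbles suffice for one recursive cut'' does not address nesting. The repair is to note that after the cut the new bad word $x_{\mathrm{new}}\,w_2''$ no longer mentions the previous auxiliary variable, so that pebble can be picked up and reused — at which point your argument collapses into the paper's own iteration, which peels one letter at a time while keeping exactly one auxiliary pebble on the (bad) running value and one pebble in hand. Finally, your insistence on filling the board with $(1_G,1_H)$ to avoid $\perp$ entries is not needed under the intended reading of the winning condition (the paper's own terminal configurations contain $\perp$), though it is harmless.
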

\begin{proof}
	By definition of the pebble game, Duplicator chooses the bijection $f$ in Step 2 of the current round and Spoiler previously picked up a pebble pair in Step 1.
	Set $w_G:=w(g_1,\dots,g_n)$ and $w_H:=w(h_1,\dots,h_n)$. In Version~$\verstwo$, Spoiler wins by placing the pebble pair in their hands on $(w_G,f(w_G))$ and then picking up any pebble pair that is currently not on the board (such a pebble pair exists since $n\leq k-2$). Then the respective pebbled tuples in $G$ and $H$ have different ordered isomorphism types.
	So let us consider Version~$\versone$.

	If $n=0$, then $w_G=w_H=1$ and $f(w_G)\neq w_H=1$ by assumption, so Spoiler wins by pebbling $(w_G,f(w_G))$ and then picking up any other pebble pair. Since $f(w_G)^2\neq f(w_G)$ but $w_H^2=w_H$, the resulting configuration is winning for Spoiler.
	
	If $n>0$, Spoiler places the pebble pair in their hands on $(w_G,f(w_G))$ and picks up a pebble pair that is currently not on the board. Duplicator then chooses a new bijection $f_1:G\to H$ and without loss of generality we may assume that $f_1$ maps pebbled group elements accordingly (otherwise we are in the $n=0$ case again). Now either there is some word $w'(x_1,\dots,x_n)$ with $|w'|<|w|$ and $f_1(w'(g_1,\dots,g_n))\neq w'(h_1,\dots,h_n)$ or otherwise we can write $w_G=w'(g_1,\dots,g_n)g_i$ for some $i$, such that $|w'|=|w_G|-1$ and $f(w'(g_1,\dots,g_n))=w'(h_1,\dots,h_n)$. In the first case, Spoiler places the pebble pair in their hands on $(w'(g_1,\dots,g_n),f_1(w'(g_1,\dots,g_n)))$ and picks up the pebble pair on $(w_G,f(w_G))$. In this case we iterate the argument. In the second case, since $k\geq 4$, up to permuting pebble pairs, Spoiler can reach the configuration
	\[
		[(g_i,w'(g_1,\dots,g_n),w_G,\perp,\dots,\perp),(h_i,w'(h_1,\dots,h_n),f(w_G),\perp,\dots,\perp)]
	\] which fulfills the winning condition by construction of $w'$.
	Since the first case can only occur finitely many times, the Lemma follows.
\end{proof}

\begin{lem}\label{DetectTupleCoords}
	Let $J\in\{\versone,\verstwo\}$, $x\in (G,\gamma_G)$ and $y\in (H,\gamma_H)$. Assume that there is some $k$-tuple $t\in G^{(k)}$ with $i$-th entry $t_i=x$ such that for each $t'\in H^{(k)}$ with $t'_i=y$ it holds
	that $\chi^{J,k}_{\gamma_G}(t)\neq \chi^{J,k}_{\gamma_H}(t')$. Then $k$-WL$_J$ distinguishes $x$ from $y$.
\end{lem}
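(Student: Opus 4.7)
I would prove the contrapositive: assuming $\chi^{J,k}_{\gamma_G}(x) = \chi^{J,k}_{\gamma_H}(y)$ in the induced $1$-colorings, I will show that for every $t \in G^{(k)}$ with $t_i = x$ there exists some $t' \in H^{(k)}$ with $t'_i = y$ and $\chi^{J,k}_{\gamma_G}(t) = \chi^{J,k}_{\gamma_H}(t')$. By Lemma~\ref{DetectColorOfCoords}(1), colors are preserved under coordinate permutations, so I may assume $i = 1$ throughout.

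By definition of the induced $1$-coloring, the assumption reads $\chi^{J,k}_{\gamma_G}(x, 1, \ldots, 1) = \chi^{J,k}_{\gamma_H}(y, 1, \ldots, 1)$. Writing $t = (x, t_2, \ldots, t_k)$, the plan is to construct $t'$ one coordinate at a time. More precisely, I will build, by induction on $j \in \{1, \ldots, k\}$, tuples
\[
\bar g^{(j)} := (x, t_2, \ldots, t_j, 1, \ldots, 1) \in G^{(k)}, \qquad \bar h^{(j)} := (y, s_2, \ldots, s_j, 1, \ldots, 1) \in H^{(k)},
\]
satisfying $\chi^{J,k}_{\gamma_G}(\bar g^{(j)}) = \chi^{J,k}_{\gamma_H}(\bar h^{(j)})$. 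The base case $j = 1$ is exactly the above assumption, and setting $t' := \bar h^{(k)}$ delivers the conclusion.

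For the inductive step, I would exploit the fixed point property of the stable coloring: since a further application of the refinement operator does not split color classes, two tuples carrying the same stable color must admit identical refinement multisets. Applied to $\bar g^{(j)}$ and $\bar h^{(j)}$, this gives equality of the multisets $\mathcal{M}(\bar g^{(j)})$ and $\mathcal{M}(\bar h^{(j)})$. Looking at the multiset entry in $\mathcal{M}(\bar g^{(j)})$ produced by the substitution $z := t_{j+1} \in G$, this equality yields an element $s_{j+1} \in H$ for which all $k$ coordinate-swap colors match simultaneously; inspecting in particular the $(j+1)$-st coordinate yields $\chi^{J,k}_{\gamma_G}(\bar g^{(j+1)}) = \chi^{J,k}_{\gamma_H}(\bar h^{(j+1)})$, closing the induction.

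The main delicacy will be justifying the multiset equality across two distinct groups. I would handle this by viewing $\chi^{J,k}_{\gamma_G}$ and $\chi^{J,k}_{\gamma_H}$ as a single coloring on $G^{(k)} \uplus H^{(k)}$, which is legitimate because the initial coloring and refinement are defined via isomorphism-invariant data that transfer uniformly between the two groups. In this combined coloring, if two tuples shared a stable color but had different multisets, then the subsequent refinement would strictly split the partition, contradicting stability. Beyond this point everything is a routine iteration of the refinement definition together with Lemma~\ref{DetectColorOfCoords}.
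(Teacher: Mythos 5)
Your proof is correct, but it takes a genuinely different route from the paper's. The paper argues via the pebble-game characterization (Lemma~\ref{AlgoVSGame}): Spoiler places the $i$-th pebble pair on $(x,y)$ and then pebbles the remaining entries of $t$ one by one; whatever Duplicator answers produces some $t'$ with $t'_i=y$, and by hypothesis the resulting configuration is winning for Spoiler. You instead prove the contrapositive directly on the colorings, using the fixed-point property of the stable coloring to propagate color equality coordinate by coordinate and manufacture a matching tuple $t'$. The two arguments are essentially dual: the element $s_{j+1}$ you extract from the multiset equality plays exactly the role of Duplicator's response when Spoiler pebbles $t_{j+1}$, so you are in effect re-deriving the relevant direction of Lemma~\ref{AlgoVSGame} from first principles rather than invoking it. What the game-based route buys is that cross-group comparison of colors is already packaged into the game; your route must address it explicitly, which you do correctly via the combined coloring on $G^{(k)}\uplus H^{(k)}$ (the round-$r$ color of a $G$-tuple is the same whether computed in $G$ alone or in the disjoint union, since the refinement multiset only ranges over substitutions from the same group, so stability of the combined coloring legitimately yields equal multisets for cross-group color-equal tuples). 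Your version is more self-contained and elementary; the paper's is shorter given the machinery already in place, and is the style used throughout the rest of the paper.
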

\begin{proof}
	We argue via Lemma~\ref{AlgoVSGame}, i.e., we show that Spoiler has a winning strategy in the~$(k+1)$-pebble game (Version~$J$) with initial configuration $[(x,\perp,\dots,\perp),(y,\perp,\dots,\perp)]$. Duplicator's bijection has to map $x$ to $y$ due to the initial configuration or otherwise Spoiler wins immediately. Spoiler places the $i$-th pebble pair on $(x,y)$. Independent of Duplicators next moves, Spoiler can subsequently pebble the entries of $t$ resulting in a configuration $[(t,\perp),(t',\perp)]$ for some tuple $t'\in H^{(k)}$ with $t'_i=y$. For any such $t'$, the resulting configuration is winning for Spoiler by assumption.
\end{proof}

We say that a coloring $\gamma_2:G^{(k)}\to\mathcal{C}_2$ \textbf{refines} a coloring $\gamma_1:G^{(k)}\to\mathcal{C}_1$, denoted $\gamma_2\preceq\gamma_1$, if each $\gamma_1$-color class is a union of $\gamma_2$-color classes.
\begin{lem}\label{LemPreColorings}
	Let $\gamma_1,\gamma_2$ be colorings on $G$ such that $(\chi^{\versone,k}_{\gamma_1})^{(G)}\preceq\gamma_2\preceq\gamma_1$.
	Then $\chi^{\versone,k}_{\gamma_1}$ and $\chi^{\versone,k}_{\gamma_2}$ induce the same color classes on $G^{(k)}$.
\end{lem}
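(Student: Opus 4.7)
The plan is to prove that the two stable colorings refine each other as partitions of $G^{(k)}$, and hence induce the same color classes. Write $\chi_1 := \chi^{\versone,k}_{\gamma_1}$ and $\chi_2 := \chi^{\versone,k}_{\gamma_2}$.

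The easy direction is $\chi_2 \preceq \chi_1$, which follows from monotonicity of WL-refinement in the initial coloring. Since $\gamma_2 \preceq \gamma_1$, inspection of the three clauses defining the initial coloring shows $\chi^{\versone,k}_{\gamma_2, 0} \preceq \chi^{\versone,k}_{\gamma_1, 0}$ (the element-color condition is only finer, the two multiplication conditions are identical). A straightforward induction on the refinement round then gives $\chi^{\versone,k}_{\gamma_2, i} \preceq \chi^{\versone,k}_{\gamma_1, i}$ for every $i$, because the refinement operator is order-preserving with respect to $\preceq$. Passing to stable colorings gives $\chi_2 \preceq \chi_1$.

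The hard direction, and the whole point of the lemma, is $\chi_1 \preceq \chi_2$. Here the strategy is to prove by induction on $i$ that $\chi_1 \preceq \chi^{\versone,k}_{\gamma_2, i}$, and then pass to the limit. For the base case $i=0$, assume $\chi_1(\bar g) = \chi_1(\bar g')$. By Lemma~\ref{DetectColorOfCoords}(2) the individual entries satisfy $\chi_1^{(G)}(g_j) = \chi_1^{(G)}(g'_j)$, and by the hypothesis $\chi_1^{(G)} \preceq \gamma_2$ this yields $\gamma_2(g_j) = \gamma_2(g'_j)$ for each $j$. Moreover, since $\chi_1$ refines $\chi^{\versone,k}_{\gamma_1, 0}$, the coincidence and multiplication patterns among the entries of $\bar g$ and $\bar g'$ agree. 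All three conditions of the initial coloring are thus satisfied, so $\chi^{\versone,k}_{\gamma_2, 0}(\bar g) = \chi^{\versone,k}_{\gamma_2, 0}(\bar g')$.

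For the inductive step, suppose $\chi_1 \preceq \chi^{\versone,k}_{\gamma_2, i}$, and let $\chi_1(\bar g) = \chi_1(\bar g')$. Stability of $\chi_1$ under the WL-refinement operator implies that the multisets
\[
\{\!\!\{ (\chi_1(\bar g_{1\leftarrow x}), \dots, \chi_1(\bar g_{k\leftarrow x})) \mid x \in G \}\!\!\}
\]
for $\bar g$ and $\bar g'$ coincide. Because the induction hypothesis lets us replace each $\chi_1$-color by the corresponding (coarser) $\chi^{\versone,k}_{\gamma_2, i}$-color without losing the identification, the analogous multisets with $\chi^{\versone,k}_{\gamma_2, i}$ in place of $\chi_1$ also coincide. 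Combined with the base case applied inductively (giving $\chi^{\versone,k}_{\gamma_2, i}(\bar g) = \chi^{\versone,k}_{\gamma_2, i}(\bar g')$), this produces $\chi^{\versone,k}_{\gamma_2, i+1}(\bar g) = \chi^{\versone,k}_{\gamma_2, i+1}(\bar g')$. Iterating until $\chi^{\versone,k}_{\gamma_2, i}$ stabilizes yields $\chi_1 \preceq \chi_2$.

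The only real subtlety is the inductive step of the second direction: one must remember that the hypothesis $\chi_1^{(G)} \preceq \gamma_2$ is only about singletons, so it feeds into the base case, after which the work is purely a bookkeeping argument showing that stability of $\chi_1$ transports equal $\chi_1$-multisets to equal $\chi^{\versone,k}_{\gamma_2, i}$-multisets. Everything else is standard monotonicity of the refinement operator.
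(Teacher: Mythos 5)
Your proposal is correct and follows essentially the same route as the paper: the easy direction is monotonicity of WL in the initial coloring, and the key direction rests on the observation (via Lemma~\ref{DetectColorOfCoords} and the hypothesis $(\chi^{\versone,k}_{\gamma_1})^{(G)}\preceq\gamma_2$) that the stable coloring $\chi^{\versone,k}_{\gamma_1}$ already refines the initial coloring $\chi^{\versone,k}_{\gamma_2,0}$. The only difference is that you spell out by induction the standard fact that a stable coloring refining another run's initial coloring also refines its stable coloring, which the paper simply asserts with a ``therefore.''
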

\begin{proof}
	Fix $k$-tuples $\bar{g},\bar{h}\in G^{(k)}$. Since $\gamma_2\preceq\gamma_1$ holds, we also have $\chi^{\versone,k}_{\gamma_2}\preceq \chi^{\versone,k}_{\gamma_1}$. Assume that $\chi^{\versone,k}_{\gamma_2,0}(\bar{g})\neq\chi^{\versone,k}_{\gamma_2,0}(\bar{h})$ then $\chi^{\versone,k}_{\gamma_1}(\bar{g})\neq \chi^{\versone,k}_{\gamma_1}(\bar{h})$ by Lemma~\ref{DetectColorOfCoords} together with the assumption $(\chi^{\versone,k}_{\gamma_1})^{(G)}\preceq\gamma_2$. So for some $i$ we obtain $\chi^{\versone,k}_{\gamma_1,i}\preceq\chi^{\versone,k}_{\gamma_2,0}$ and therefore $\chi^{\versone,k}_{\gamma_1}\preceq \chi^{\versone,k}_{\gamma_2}$.
\end{proof}

\section{WL-Refinement on Quotient Groups}\label{SecWLvsGroupStructure}
We are now prepared to investigate the interplay between Weisfeiler-Leman refinement and basic group structure, such as subgroups, normal closures or quotients. We introduce the notion of subset selectors to compare pairs of groups in terms of their substructures. 

\begin{dfn}
A \textbf{subset selector} $\mathcal{S}$ is a mapping that associates with each colored group $(G,\gamma)$ a subset $\mathcal{S}(G,\gamma)\subseteq G$.
For each version $J\in \{\versone,\verstwo\}$, a subset selector $\mathcal{S}$ is called \textbf{$k$-WL$_{J}$-detectable}, if
\[
	\chi^{J,k}_{\gamma_G}(\mathcal{S}(G,\gamma_G))\cap \chi^{J,k}_{\gamma_H}(H\setminus\mathcal{S}(H,\gamma_H))=\emptyset,
\] 
for all pairs of colored groups $(G,\gamma_G),(H,\gamma_H)$.
\end{dfn}

When the dependency of $\mathcal{S}(G,\gamma_G)$ on $(G,\gamma_G)$ is clear from the context, we also say that \textbf{$\mathcal{S}(G,\gamma_G)$ is $k$-WL$_{J}$-detectable} (instead of $(G,\gamma)\mapsto S(G,\gamma)$ being detectable).
Examples of $2$-WL$_{J}$-detectable subset selectors include the association of every group with its center ($J=\verstwo$) or the subset selector associating with each group the subset of elements of order $2$. 

We should remark here that in our sense detectable means that the subset of interest is a union of $\chi^{J,k}_{\gamma_G}$-color classes, but we make no statement on how to algorithmically determine which color classes form the set. In that sense it might a priory not be clear that the subset is even computable.

From the definition it follows that if $\mathcal{S}$ is $k$-WL$_{J}$-detectable then $\mathcal{S}(G,\gamma_G)$ is $\chi^{J,k}_{\gamma_G}$-induced in $G$ and hence invariant under Aut$_{\gamma_G}(G)$. Note that $Id(G):=G$ defines a $k$-WL$_{J}$ detectable subset selector for all $k\geq 2$. Furthermore, if $\mathcal{S}$ and $\mathcal{T}$ are $k$-WL$_J$-detectable then so are $\mathcal{S}(G,\gamma_G)\cup \mathcal{T}(G,\gamma_G)$, $\mathcal{S}(G,\gamma_G)\cap \mathcal{T}(G,\gamma_G)$ and $G\setminus \mathcal{S}(G,\gamma_G)$.

\begin{dfn}
	A \textbf{group expression} $\mathcal{E}:=(\mathcal{S}_1,\dots,\mathcal{S}_t;\mathcal{R})$ of length $t$ is a sequence of subset selectors $\mathcal{S}_i$ together with a set $\mathcal{R}$ of words $w(x_1,\dots,x_t)$ over $t$ variables $x_1,\dots,x_t$, allowing inverses. Let $(G,\gamma)$ be a colored group, then a $t$-tuple $(g_1,\dots, g_t)\in G^{(t)}$ is a \textbf{solution} to $\mathcal{E}$ if for each $i$ it holds that $g_i\in \mathcal{S}_i(G,\gamma)$ and for each $w\in\mathcal{R}$ it holds that $w(g_1,\dots,g_t)=1$. Let $Sol_\mathcal{E}(G,\gamma)\subseteq G^{(t)}$ denote the set of all solutions to $\mathcal{E}$ over $(G,\gamma)$.
\end{dfn}

\begin{lem}\label{LemGroupExpression}
	Consider a group expression $\mathcal{E}:=(\mathcal{S}_1,\dots,\mathcal{S}_t;\mathcal{R})$. Let $k\geq t$ and assume that each $\mathcal{S}_i$ is $k$-WL$_\verstwo$-detectable.
	\begin{enumerate}
		\item Let $(G,\gamma_G)$ and $(H,\gamma_H)$ be colored groups. 
		Then all $t$-tuples in $Sol_\mathcal{E}(G,\gamma_G)$ can be distinguished from all $t$-tuples in $H^{(t)}\setminus Sol_\mathcal{E}(H,\gamma_H)$ via $k$-WL$_\verstwo$.
		\item For $1\leq j\leq t$ and colored groups $(G,\gamma)$ define
		 \begin{align*}
		 	Sol^{\exists}_j(G,\gamma) &:=\{x\in G\mid\exists (x_1,\dots,x_t)\in Sol_\mathcal{E}(G,\gamma): x_j=x\} \\
			Sol^{\forall}_j(G,\gamma) &:=\{x\in G\mid(\forall x_i\in \mathcal{S}_i(G,\gamma))_{1\leq i\leq t}: (x_1,\dots,x_{j-1},x,x_{j+1},\dots,x_t)\in Sol_\mathcal{E}(G,\gamma)\}.		
		 \end{align*}
		 Then $Sol^{\exists}_j$ and $Sol^{\forall}_j$ are $k$-WL$_\verstwo$-detectable subset selectors for all $j$.
	\end{enumerate}
	The same holds for $k$-WL$_\versone$, provided $k>t$.
\end{lem}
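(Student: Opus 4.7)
The plan is to split part 1 according to which defining condition a non-solution $\bar{h}\in H^{(t)}\setminus Sol_\mathcal{E}(H,\gamma_H)$ violates, and then derive part 2 from part 1 via Lemma~\ref{DetectTupleCoords}. Fix a solution $\bar{g}\in Sol_\mathcal{E}(G,\gamma_G)$ and such an $\bar{h}$: either some coordinate satisfies $h_j\notin\mathcal{S}_j(H,\gamma_H)$ (while $g_j\in\mathcal{S}_j(G,\gamma_G)$), or some $w\in\mathcal{R}$ witnesses $w(\bar{g})=1\neq w(\bar{h})$. In the membership case I would invoke $k$-WL$_J$-detectability of $\mathcal{S}_j$ to obtain $\chi^{J,k}_{\gamma_G}(g_j)\neq\chi^{J,k}_{\gamma_H}(h_j)$ in the induced element-coloring and then lift this inequality to the $t$-tuples via Lemma~\ref{DetectColorOfCoords}(2).

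For the equation case the two versions diverge. In Version~$\verstwo$ the initial coloring already encodes the ordered isomorphism type of the $k$-tuple obtained by padding $\bar{g}$ (resp.\ $\bar{h}$) with identity; padding leaves the generated subgroup unchanged, and the relation $w(\bar{g})=1$ belongs to this ordered isomorphism type, so $\chi^{\verstwo,k}_{\gamma,0}$ already separates the padded tuples. For Version~$\versone$ with $k>t$ I would pass to the pebble game via Lemma~\ref{AlgoVSGame}: it suffices to exhibit a Spoiler strategy in the $(k+1)$-pebble game from the padded initial configuration, which leaves at least one free pebble pair available. Mimicking the proof of Lemma~\ref{RespectPartialSubgroups}, Spoiler would use the free pair to successively reveal prefix products of $w$ in $G$ against their images under Duplicator's bijections; since $w(\bar{g})=1$ but $w(\bar{h})\neq 1$, some bijection must disagree with $w$, which Spoiler converts into a local initial-color violation.

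For part 2 I would prove both subset selectors detectable using Lemma~\ref{DetectTupleCoords}. For $Sol^{\exists}_j$, given $x\in Sol^{\exists}_j(G,\gamma_G)$ witnessed by a solution $\bar{g}$ with $g_j=x$ and given $y\notin Sol^{\exists}_j(H,\gamma_H)$, set $\tau=(g_1,\dots,g_t,1,\dots,1)\in G^{(k)}$. For any $\tau'\in H^{(k)}$ with $\tau'_j=y$ and $\chi^{J,k}_{\gamma_G}(\tau)=\chi^{J,k}_{\gamma_H}(\tau')$, the trailing entries of $\tau'$ are forced to be $1$: Version~$\versone$ reads this off the initial-coloring relation $\tau_i\tau_i=\tau_i$, which in a group forces $\tau_i=1$, while Version~$\verstwo$ reads it off the ordered isomorphism type. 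Hence $\tau'$ is the identity-padding of some $t$-tuple $\bar{h}$ with $h_j=y$, which is necessarily a non-solution, so part 1 applied to $\bar{g},\bar{h}$ gives the required contradiction and Lemma~\ref{DetectTupleCoords} delivers $k$-WL$_J$-distinguishment. The argument for $Sol^{\forall}_j$ is dual: start with a witness non-solution $\bar{h}$ for $y$ satisfying $h_i\in\mathcal{S}_i(H,\gamma_H)$ for every $i$, pad it to $\tau'$, and observe that any color-matching $\tau$ with $\tau_j=x$ has $\tau_i\in\mathcal{S}_i(G,\gamma_G)$ for all $i$ by coordinate-wise detectability together with Lemma~\ref{DetectColorOfCoords}(2); the universal property of $x$ then forces $\bar{g}:=(\tau_1,\dots,\tau_t)$ to be a solution, again contradicting part 1.

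The main obstacle I expect is the equation case of Version~$\versone$ in part 1. Because the Version~$\versone$ initial coloring only records length-two products, relations $w(\bar{g})=1$ of longer length can be captured only through refinement, and the pebble-game argument makes transparent that exactly one extra pebble pair suffices for Spoiler to trace $w$. This is precisely what forces the strict inequality $k>t$ in the Version~$\versone$ statement as opposed to $k\geq t$ for Version~$\verstwo$. Everything else follows mechanically once Lemmas~\ref{AlgoVSGame},~\ref{DetectColorOfCoords},~\ref{RespectPartialSubgroups} and~\ref{DetectTupleCoords} are in place.
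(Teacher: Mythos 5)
Your proposal is correct and follows essentially the same route as the paper: the same two-case split in Part 1 (membership violation handled via detectability and Lemma~\ref{DetectColorOfCoords}, relation violation handled via the ordered isomorphism type for Version~\verstwo{} and via the word-tracing Spoiler strategy of Lemma~\ref{RespectPartialSubgroups} for Version~\versone, which is where $k>t$ is needed), and Part 2 deduced from Part 1 through Lemma~\ref{DetectTupleCoords}. Your extra observation that a color match forces the padded trailing entries to equal $1$ is a slightly more careful justification of the step the paper compresses into ``we are exactly in the situation of Lemma~\ref{DetectTupleCoords}'', but it is not a different argument.
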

\begin{proof}
	\begin{enumerate}
		\item Let $\bar{g}=(g_1,\dots,g_t)\in Sol_\mathcal{E}(G,\gamma_G)$ and $\bar{h}=(h_1,\dots,h_t)\in H^{(t)}\setminus Sol_\mathcal{E}(H,\gamma_H)$. First consider the case that there is some word $w\in\mathcal{R}$ such that $w(\bar{h})\neq 1$. Then there does not exist an isomorphism
		between $\langle g_1,\dots,g_t\rangle$ and $\langle h_1,\dots,h_t\rangle$ that maps $g_i$ to $h_i$ for all $i$. Thus, by definition, the $k$-tuples $(g_1,\dots,g_t,1,\dots,1)$ and $(h_1,\dots,h_t,1,\dots,1)$ obtain different initial colors in $k$-WL$_\verstwo$. In the other case, since $\bar{h}\notin Sol_\mathcal{E}(H,\gamma_H)$, there must be some index $j$ such that $h_j\notin \mathcal{S}_j(H,\gamma_H)$. By assumption $\mathcal{S}_j$ is detectable by $k$-WL$_\verstwo$ and $g_j\in \mathcal{S}_j(G,\gamma_G)$, so by definition $\chi_{\gamma_G}^{\verstwo,k}(g_j)\neq \chi_{\gamma_H}^{\verstwo,k}(h_j)$. In particular, $\bar{g}$ and $\bar{h}$ can be distinguished via Lemma~\ref{DetectColorOfCoords}.
		
		The proof for $k$-WL$_\versone$ is almost identical. Note that for Version $\versone$ we assume that $k\geq t+1$. Then, in the first case where the tuples fulfill different relations we use Lemma~\ref{RespectPartialSubgroups} to obtain the result for $k$-WL$_\versone$. The second case can be treated identically for both Version $\versone$ and Version $\verstwo$.
		
		\item Using the statement from Part 1, if we consider $g\in Sol^\exists_j(G,\gamma_G)$ and $h\in H\setminus Sol^\exists_j(H,\gamma_H)$ we are exactly in the situation of Lemma~\ref{DetectTupleCoords} and so $g$ and $h$ can be distinguished via $k$-WL$_\verstwo$, i.e., $Sol^\exists_j$ is detectable. The same argument works for $g\in Sol^\forall_j(G,\gamma_G)$ and $h\in H\setminus Sol^\forall_j(H,\gamma_H)$. The latter is equivalent to the existence
		of $h_i\in \mathcal{S}_i(H,\gamma_H)$ such that
		$(h_1,\dots,h_{j-1},h,h_{j+1},\dots,h_t)\notin Sol_\mathcal{E}(H,\gamma_H)$,
		so in this situation we use Lemma~\ref{DetectTupleCoords} for $H^{(t)}\setminus Sol_\mathcal{E}(H,\gamma_H)$. \qedhere
	\end{enumerate}
\end{proof}

\begin{lem}\label{lem:basicClosure}
	Consider $k$-WL$_{\verstwo}$-detectable subset selectors $\mathcal{S},\mathcal{T}$. Then the following subset selectors are $k$-WL$_{\verstwo}$-detectable:
	\begin{enumerate}
		\item $\mathcal{S}^e$ for each $e\in\mathbb{Z}$, where $\mathcal{S}^e(G,\gamma):=\{ s^e\mid s\in \mathcal{S}(G,\gamma)\}$,
		\item $C_\mathcal{S}(\mathcal{T})$, where $C_\mathcal{S}(\mathcal{T})(G,\gamma):=\{s\in \mathcal{S}(G,\gamma)\mid [s,\mathcal{T}(G,\gamma)]=\{1\}\}$.
	\end{enumerate}
	Provided $k$ is at least $3$, $k$-WL$_{{\verstwo}}$ further detects the following subset selectors:
	\begin{enumerate}
		\item[3.] $\{s_1\dots s_e\mid s_i\in \mathcal{S}(G,\gamma)\}$ for each~$e\in\mathbb{N}$, in particular also $\langle \mathcal{S}(G,\gamma)\rangle$,
		\item[4.] $\{s^{t}:=tst^{-1}\mid s\in \mathcal{S}(G,\gamma), t\in \mathcal{T}(G,\gamma)\}$, in particular also $\langle \mathcal{S}(G,\gamma)^G\rangle$,
		\item[5.] $\mathcal{N}_\mathcal{S}(\mathcal{T})$, where $\mathcal{N}_\mathcal{S}(\mathcal{T})(G,\gamma):=\{s\in \mathcal{S}(G,\gamma)\mid \mathcal{T}(G,\gamma)^s=\mathcal{T}(G,\gamma)\}$,
		\item[6.] $[\mathcal{S},\mathcal{T}]$, where $[\mathcal{S},\mathcal{T}](G):=\langle [s,t]\mid s\in \mathcal{S}(G,\gamma),\ t\in \mathcal{T}(G,\gamma) \rangle$.
	\end{enumerate}
	All statements remain true if we replace Version~$\verstwo$~by Version~$\versone$~everywhere (including the assumptions), provided $k>2$ in Parts 1 and 2 and $k>3$ in Parts 3--6.
\end{lem}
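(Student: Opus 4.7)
The plan is to derive every item by constructing a suitable group expression and invoking Lemma~\ref{LemGroupExpression}, together with the elementary closure of detectable subset selectors under union, intersection, and complement in $G$ noted just before this lemma. For Part~1 I would use the two-variable expression $(\mathcal{S},Id;\{x_1^{e}x_2^{-1}\})$ and take $Sol^{\exists}_2$, which equals $\mathcal{S}^e$. For Part~2, the expression $(\mathcal{S},\mathcal{T};\{x_1x_2x_1^{-1}x_2^{-1}\})$ together with $Sol^{\forall}_1$ yields exactly $C_\mathcal{S}(\mathcal{T})$. Both use $t=2$ selectors, matching the bounds $k\geq 2$ in Version~\verstwo{} and $k\geq 3$ in Version~\versone{} supplied by Lemma~\ref{LemGroupExpression}.

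For Parts~3, 4, and~6 I would use three selectors, with an $Id$-slot acting as an output variable. Specifically, $(\mathcal{S},\mathcal{T},Id;\{x_1x_2x_3^{-1}\})$ with $Sol^{\exists}_3$ detects the pairwise product $\mathcal{S}\cdot\mathcal{T}$ for Part~3; the set of $e$-fold products is then obtained by iterating this pairwise construction $e-1$ times, each step being a fresh three-variable application of Lemma~\ref{LemGroupExpression}. The generated subgroup $\langle\mathcal{S}(G,\gamma)\rangle$ arises as the union $\bigcup_{e\geq 1}(\mathcal{S}\cup\mathcal{S}^{-1})^e$, which per colored group stabilizes after at most $|G|$ iterations, so detectability propagates to the union via the elementary closure under finite unions. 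Part~4 is handled analogously by $(\mathcal{S},\mathcal{T},Id;\{x_2x_1x_2^{-1}x_3^{-1}\})$ with $Sol^{\exists}_3$, and $\langle\mathcal{S}(G,\gamma)^G\rangle$ follows by choosing $\mathcal{T}=Id$ and then applying Part~3. Part~6 uses $(\mathcal{S},\mathcal{T},Id;\{x_1x_2x_1^{-1}x_2^{-1}x_3^{-1}\})$ with $Sol^{\exists}_3$ for the set of commutators $\{[s,t]\mid s\in\mathcal{S},t\in\mathcal{T}\}$, followed again by Part~3.

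The subtlest case is Part~5. I would first observe that since conjugation by any $s$ is a bijection of $G$, for a finite subset $\mathcal{T}\subseteq G$ the equality $s\mathcal{T}s^{-1}=\mathcal{T}$ is equivalent to $s\mathcal{T}s^{-1}\subseteq\mathcal{T}$. Hence $\mathcal{S}\setminus\mathcal{N}_\mathcal{S}(\mathcal{T})$ is exactly $\{s\in\mathcal{S}\mid\exists\,t\in\mathcal{T},\,t'\in G\setminus\mathcal{T}:\ sts^{-1}=t'\}$, which the expression $(\mathcal{S},\mathcal{T},G\setminus\mathcal{T};\{x_1x_2x_1^{-1}x_3^{-1}\})$ together with $Sol^{\exists}_1$ detects, using that $G\setminus\mathcal{T}$ is itself detectable whenever $\mathcal{T}$ is. Complementing in $\mathcal{S}$ finally yields $\mathcal{N}_\mathcal{S}(\mathcal{T})$.

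The one bookkeeping point that I expect to require care is ensuring the iterative constructions for generated subgroups, normal closures, and commutator subgroups do not inflate the dimension: every iteration step is a single application of Lemma~\ref{LemGroupExpression} with at most three selectors, so the overall dimension bound remains $k\geq 3$ in Version~\verstwo{} (respectively $k\geq 4$ in Version~\versone), matching the bounds stated in the lemma.
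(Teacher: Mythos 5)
Your proposal is correct and follows essentially the same route as the paper: each item is realized as $Sol^{\exists}_j$ or $Sol^{\forall}_j$ of a short group expression via Lemma~\ref{LemGroupExpression}, with Parts 3, 4 and 6 built by iterating a three-variable product expression and Part 5 handled by detecting the complement (non-normalizers) using the detectability of $G\setminus\mathcal{T}(G,\gamma)$. The only cosmetic differences are your use of $\mathcal{S}\cup\mathcal{S}^{-1}$ for the generated subgroup (the paper relies on finiteness to drop the inverses) and your explicit remark that $s\mathcal{T}s^{-1}=\mathcal{T}$ reduces to containment, which the paper leaves implicit.
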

\begin{proof}
	We make repeated use of Lemma~\ref{LemGroupExpression}. Given a group expression 
	$\mathcal{E} :=(\mathcal{S}_1,\dots,\mathcal{S}_t;\mathcal{R})$, define $Sol_j^\exists$ and $Sol^\forall_j$ ($j=1,\dots,t)$ as in Lemma~\ref{LemGroupExpression}.
	\begin{enumerate}
		\item Set $\mathcal{E}=(\mathcal{S},Id;\{x_1^ex^{-1}_2\})$. Then $\mathcal{S}^e=Sol^\exists_2$.
		 \item Set $\mathcal{E}=(\mathcal{S},\mathcal{T};\{[x_1,x_2]\})$. Then $\mathcal{S}(G,\gamma)\cap C_G(\mathcal{T}(G,\gamma))=Sol^\forall_1$.
		 \item We argue by induction over $e$. Let us write $\mathcal{S}^{[e]}(G,\gamma):=\{ g_1\dots g_e\mid g_i\in \mathcal{S}(G,\gamma)\}$.
		 Assume that $\mathcal{S}^{[e]}$ can be detected for $k\geq 3$ and consider $\mathcal{E}=(\mathcal{S}^{[e]},\mathcal{S},Id;\{x_1x_2x^{-1}_3\})$. Then $Sol^\exists_3$ is exactly $\mathcal{S}^{[e+1]}$ and since $\mathcal{S}$ and $\mathcal{S}^{[e]}$ are both detectable, so is $\mathcal{S}^{[e+1]}$. In particular, $\langle\mathcal{S}(G,\gamma)\rangle =\bigcup_e \mathcal{S}^{[e]}(G,\gamma)$ is detectable by $k$-WL$_\verstwo$ as a union of detectable
		 subset selectors.
		 \item Set $\mathcal{E}=(\mathcal{S},\mathcal{T},Id;\{x_2^{-1}x_1x_2x^{-1}_3\})$. The $\mathcal{S}(G,\gamma)$-conjugates of elements in $\mathcal{T}(G,\gamma)$ are precisely $Sol^\exists_3$. Together with Part 3, this shows that the normal closure
		 of $\mathcal{T}$ is detectable by $k$-WL$_\verstwo$ for $k\geq 3$.
		 \item Set $\mathcal{E}=(\mathcal{T},\mathcal{S},Id\setminus\mathcal{T};\{x_2^{-1}x_1x_2x^{-1}_3\})$. If $\mathcal{T}$ is detectable then so is $G\setminus \mathcal{T}(G,\gamma)$ which implies that $G\setminus Sol^\exists_2(G,\gamma)$ is detectable. Finally note that elements of $\mathcal{S}(G,\gamma)$ do not normalize $\mathcal{T}(G,\gamma)$ if and only if they belong to $Sol^\exists_2(G,\gamma)$.
		 \item Set $\mathcal{E}=(\mathcal{T},\mathcal{S},Id;\{[x_1,x_2]x^{-1}_3\})$. Then $Sol^\exists_3$ is the set of all $\mathcal{T}(G,\gamma)$-$\mathcal{S}(G,\gamma)$-commutators and using Part 3, we obtain detectability of the group they generate, namely $[\mathcal{T},\mathcal{S}]$.
	\end{enumerate}
	The analogue statements for $k$-WL$_\versone$ follow from Lemma~\ref{LemGroupExpression} as well, provided $k>t$ in each case.
\end{proof}

We highlight two direct implications of the previous lemma.
\begin{cor}\label{CorNormalClosuresWL}
	 Let $k\geq 3$, $x\in (G,\gamma_G)$ and $y\in (H,\gamma_H)$. If $\chi^{\verstwo,k}_{\gamma_G}(x)=\chi^{\verstwo,k}_{\gamma_H}(y)$ then 
	\[
		\left(\langle x^G\rangle,\gamma_G|_{\langle x^G\rangle}\right) \equiv_{k-1}^\verstwo
	\left(\langle y^H\rangle,\gamma_H|_{\langle y^H\rangle}\right).\
	\] The same holds for Version $\versone$~with $k\geq 4$.
\end{cor}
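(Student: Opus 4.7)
The plan is to argue via the bijective pebble game characterization (Lemma~\ref{AlgoVSGame}) by contrapositive. Assume $(\langle x^G\rangle, \gamma_G|_{\langle x^G\rangle}) \not\equiv_{k-1}^\verstwo (\langle y^H\rangle, \gamma_H|_{\langle y^H\rangle})$, so that Spoiler has a winning strategy $\sigma'$ in the $k$-pebble game Version~\verstwo~on the restricted subgroups from the empty configuration. I will lift $\sigma'$ to a Spoiler winning strategy $\sigma$ in the $(k+1)$-pebble game Version~\verstwo~on $(G, \gamma_G)$ and $(H, \gamma_H)$ from the configuration $[(x, 1, \ldots, 1, \perp), (y, 1, \ldots, 1, \perp)]$; by Lemma~\ref{AlgoVSGame} this yields $\chi^{\verstwo,k}_{\gamma_G}(x) \neq \chi^{\verstwo,k}_{\gamma_H}(y)$, contradicting the hypothesis.

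In the simulation, the first big-game pebble pair remains permanently on $(x, y)$, and the remaining $k$ pairs mirror the $k$ pairs of the small game; big-game Spoiler first displaces the identity-pebbles as $\sigma'$ begins to place pebbles on subgroup elements. Whenever big-game Duplicator offers a bijection $f : G \to H$, Spoiler proceeds as follows. If $f$ restricts to a bijection $\langle x^G\rangle \to \langle y^H\rangle$, Spoiler feeds $f|_{\langle x^G\rangle}$ into $\sigma'$, obtains the next small-game Spoiler move, and plays the corresponding move in the big game. A small-game Spoiler win, witnessed by $\langle \bar g\rangle \not\cong \langle \bar h\rangle$ via the relevant ordered map, forces a big-game Spoiler win, since any ordered isomorphism $\langle x, \bar g\rangle \to \langle y, \bar h\rangle$ would restrict to one of $\langle \bar g\rangle \to \langle \bar h\rangle$.

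The heart of the argument is the other case, where some $g \in \langle x^G\rangle$ satisfies $f(g) \notin \langle y^H\rangle$ (or symmetrically on the $H$ side). This is the game-theoretic counterpart of Lemma~\ref{lem:basicClosure} Parts~3 and~4, with the pebbled pair $(x, y)$ playing the role of a detectable subset selector. Writing $g = \prod_{i=1}^{r} z_i^{-1} x^{\epsilon_i} z_i$, Spoiler first places a free pair on $(g, f(g))$ and then iteratively pebbles one conjugator $z_i$ together with a running partial product. The pebble budget $k+1 \geq 4$ is exactly sufficient to simultaneously hold $(x, y)$, $(g, f(g))$, a conjugator, and an intermediate product, matching the pebble count used in the proofs of Parts~3 and~4 of Lemma~\ref{lem:basicClosure}. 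At some step of the induction Duplicator must violate either a conjugation identity $z_i^{-1} x z_i = c_i$ or the multiplication $c_1 \cdots c_r = g$, which Spoiler converts into a winning configuration via Lemma~\ref{RespectPartialSubgroups}.

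The main obstacle is exactly this last step: executing the inductive exposure of $g$ as a word in conjugates of $x$ directly in the pebble game, where the tight pebble budget makes the bookkeeping delicate. This step is essentially a translation of the inductive group-expression arguments from Lemma~\ref{lem:basicClosure} Parts~3 and~4 into explicit pebbling strategies, and the analogous statement for Version~\versone~with $k \geq 4$ follows by the same simulation using that Lemma~\ref{lem:basicClosure} then requires one additional pebble.
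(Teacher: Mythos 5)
Your proposal is correct and follows essentially the route the paper intends: the paper states this as a direct consequence of Lemma~\ref{lem:basicClosure} (Parts 3 and 4), and your argument is precisely the pebble-game unpacking of that implication — keep a pair on $(x,y)$ so that $\{x\}$ acts as a detectable singleton selector, force Duplicator's bijections to respect the normal closures via the group-expression/word-pebbling machinery of Lemmas~\ref{LemGroupExpression} and~\ref{RespectPartialSubgroups}, and then restrict the game to the subgroups. The only stylistic difference is that you re-derive the "punishment" step by explicit pebbling rather than citing the already-proved detectability of $\langle\mathcal{S}^G\rangle$ as a black box, and your pebble counts ($k\geq 3$ for Version \verstwo, one extra pebble for Version \versone) match the paper's.
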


\begin{cor}\label{normalSub}
	Let $k\geq 3$, $x_i\in G$ and $y_i\in H$ for $i=1,\dots,k-1$. If $\chi^{\verstwo,k}_{G}(x_1,\dots,x_{k-1})= \chi^{\verstwo,k}_{H}(y_1,\dots,y_{k-1})$ then
	$\langle x_1,\dots,x_{k-1}\rangle$ is normal in $G$ if and only if $\langle y_1,\dots,y_{k-1}\rangle$ is normal in $H$.
\end{cor}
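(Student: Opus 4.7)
The plan is to exploit the refinement rule of $k$-WL$_\verstwo$ to extract a bijection $\pi\colon G\to H$ along which conjugation can be transferred, together with the fact that the Version~$\verstwo$ initial coloring records ordered isomorphism types of generated subgroups. Setting $\bar g:=(x_1,\dots,x_{k-1},1)\in G^{(k)}$ and $\bar h:=(y_1,\dots,y_{k-1},1)\in H^{(k)}$, the hypothesis rewrites as $\chi^{\verstwo,k}_G(\bar g)=\chi^{\verstwo,k}_H(\bar h)$ via the padding convention for induced $m$-colorings from Section~\ref{SecTechnicalBasics}. Because the stable coloring is a fixed point of the refinement step, the multisets of colors obtained by replacing the $k$-th entry must coincide, yielding
\[
\{\!\{\chi^{\verstwo,k}_G(x_1,\dots,x_{k-1},x)\mid x\in G\}\!\}=\{\!\{\chi^{\verstwo,k}_H(y_1,\dots,y_{k-1},y)\mid y\in H\}\!\}.
\]
Since $k$-WL$_\verstwo$ distinguishes groups of different orders, $|G|=|H|$, so I obtain a bijection $\pi\colon G\to H$ with $\chi^{\verstwo,k}_G(x_1,\dots,x_{k-1},x)=\chi^{\verstwo,k}_H(y_1,\dots,y_{k-1},\pi(x))$ for every $x\in G$.

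Next I would unpack the Version~$\verstwo$ initial coloring: the above equality, and hence in particular the equality of initial colors, certifies an isomorphism $\varphi_x\colon\langle x_1,\dots,x_{k-1},x\rangle\to\langle y_1,\dots,y_{k-1},\pi(x)\rangle$ sending $x_i\mapsto y_i$ and $x\mapsto\pi(x)$. Now suppose $U_G:=\langle x_1,\dots,x_{k-1}\rangle$ is normal in $G$. Then for every $x\in G$ and every index $i$ there exists a word $w_{x,i}$ (allowing inverses) in $k-1$ variables with $xx_ix^{-1}=w_{x,i}(x_1,\dots,x_{k-1})$. Applying $\varphi_x$ to both sides gives
\[
\pi(x)y_i\pi(x)^{-1}=w_{x,i}(y_1,\dots,y_{k-1})\in U_H:=\langle y_1,\dots,y_{k-1}\rangle.
\]
Surjectivity of $\pi$ then guarantees that every $H$-conjugate of each $y_i$ lies in $U_H$, whence $U_H$ is normal in $H$. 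The reverse implication is symmetric.

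The only delicate point is the extraction of the bijection $\pi$ from the multiset equality; this hinges both on the stability of $\chi^{\verstwo,k}$ and on the padding convention that lets a $(k-1)$-tuple be treated as a $k$-tuple to which the refinement rule applies. Once $\pi$ and the witness isomorphisms $\varphi_x$ are in hand, the transfer of normality is a one-line word-level calculation, so no further combinatorial work is required.
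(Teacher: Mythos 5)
Your proof is correct. The extraction of the bijection $\pi$ from the stability of $\chi^{\verstwo,k}$ is sound (equality of the multisets $\mathcal{M}(\bar g)=\mathcal{M}(\bar h)$ follows because one further refinement round cannot split a stable class, and projecting to the $k$-th substitution position gives exactly the displayed multiset identity, which also yields $|G|=|H|$ for free), and the passage from equal Version~$\verstwo$ initial colors to the witness isomorphisms $\varphi_x$ is precisely what the definition of $\chi^{\verstwo,k}_{\gamma,0}$ provides. The word-level transfer of normality via $\varphi_x$ and surjectivity of $\pi$ then closes the argument. This is, however, a genuinely different route from the paper's: the corollary is stated there as a direct consequence of Lemma~\ref{lem:basicClosure}, i.e.\ one regards the pebbled elements $x_1,\dots,x_{k-1}$ as defining detectable singleton subset selectors and invokes the closure properties (Parts~3 and~4, detectability of $\langle\mathcal{S}\rangle$ and of conjugate sets, built on the group-expression machinery of Lemma~\ref{LemGroupExpression}) to conclude that the generated subgroup and its normal closure correspond. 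Your argument is more elementary and self-contained---it never leaves the definition of the refinement operator and the initial coloring, and in fact does not use $k\geq 3$ beyond what is needed to have a free coordinate---whereas the paper's approach buys reusability: the same subset-selector formalism immediately yields the whole family of statements in Section~\ref{SecWLvsGroupStructure} rather than this one corollary. One stylistic caveat: your $\varphi_x$ trick is essentially a hand-rolled instance of the mechanism behind Lemma~\ref{LemGroupExpression}, so if you were writing this inside the paper you could shorten it by citing that lemma instead.
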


In the following example we employ the previous lemma to identify groups as direct products of detectable subgroups via WL-refinement.
\begin{exm}
	Let $G\equiv^{\verstwo}_3 H$ and assume that $G=G_1\times G_2$ with $\chi^{\verstwo,3}_{G}$-induced subgroups $G_i\leq G$. We can use the colors of elements in $G_i$ to define a detectable subset selector
	$K\mapsto K_i:=\{x\in K\mid \chi^{\verstwo,k}_{K}(x)\in \chi^{\verstwo,k}_{G}(G_i)\}$. 
	Since $G\equiv^\verstwo_3 H$, it must hold that $H_i\leq H$ is indistinguishable from $G_i$ via $3$-WL$_\verstwo$. By the previous lemma, $3$-WL$_{{\verstwo}}$ detects $[G_1,G_2]$ and $G_1\cap G_2$, which are both trivial in this case, as well as $\langle G_1,G_2\rangle$, which is equal to $G$. By the definition of detectability, the same conditions must apply for $H_1$ and $H_2$, thus $H=H_1\times H_2$ with $H_i\equiv^\verstwo_3 G_i$.
\end{exm}

\begin{cor}\label{cor:direct_product_of_detectables}
	Let $J=\versone$ and $k\geq 3$, or $k=\verstwo$ and $k\geq 4$. Consider a direct product $G=G_1\times\dots\times G_t$. 
	If each $G_i$ is $\chi^{k,J}_G$-induced and $G\equiv^J_k H$ then $H$ can be decomposed as $H=H_1\times\dots\times H_t$ with $H_i\equiv^J_k G_i$ for all $1\leq i\leq t$.
\end{cor}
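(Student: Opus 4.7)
The plan is to locate candidate direct factors of $H$ by using the WL-coloring of $G$ as a naming device, verify via Lemma~\ref{lem:basicClosure} that they realize $H$ as an internal direct product, and finally show each factor $H_i$ is $k$-WL$_J$-indistinguishable from $G_i$ by restricting the pebble game.

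Concretely, define $H_i:=\{x\in H\mid \chi^{J,k}_H(x)\in\chi^{J,k}_G(G_i)\}$, the union of color classes in $H$ matching those occupied by $G_i$. The assumption $G\equiv^J_k H$ together with Lemma~\ref{DetectColorOfCoords} yields a color-preserving bijection $f\colon G\to H$ with $f(G_i)=H_i$; in particular $|H_i|=|G_i|$. Now apply the closure operations of Lemma~\ref{lem:basicClosure} (parts 3, 4, 6) to the detectable subset selectors $\mathcal{S}_i(K):=\{x\in K\mid\chi^{J,k}_K(x)\in\chi^{J,k}_G(G_i)\}$; this makes $\langle\mathcal{S}_i\rangle$, $[\mathcal{S}_i,\mathcal{S}_j]$ and $\langle\mathcal{S}_1\cup\dots\cup\mathcal{S}_t\rangle$ all detectable. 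Evaluating these on $G$ gives $\langle G_i\rangle=G_i$, $[G_i,G_j]=\{1\}$ for $i\neq j$, and $\langle G_1,\ldots,G_t\rangle=G$, which by detectability transport to $\langle H_i\rangle=H_i$ (so each $H_i$ is a subgroup), $[H_i,H_j]=\{1\}$ for $i\neq j$, and $\langle H_1,\ldots,H_t\rangle=H$. Combined with $\prod_i|H_i|=\prod_i|G_i|=|H|$, pairwise-commuting subgroups whose joint has full order multiplicity compose to an internal direct product, so $H=H_1\times\dots\times H_t$.

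It remains to establish $G_i\equiv^J_k H_i$. Introduce element-colorings $\gamma_G$ on $G$ and $\gamma_H$ on $H$ that give each element of $G_i$ (resp.\ $H_i$) a distinct color $c_i$. Because each $G_i$ is $\chi^{J,k}_G$-induced, Lemma~\ref{LemPreColorings} (with the routine Version~$\verstwo$ analogue) shows that $\chi^{J,k}_{(G,\gamma_G)}$ and $\chi^{J,k}_G$ induce the same partition of $G^{(k)}$, and similarly for $H$; hence $(G,\gamma_G)\equiv^J_k(H,\gamma_H)$. In the corresponding $(k+1)$-pebble game, any Duplicator strategy is forced to supply bijections with $f(G_i)=H_i$ for every $i$, since otherwise Spoiler pebbles a witness whose element color violates the initial check. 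Restricting these bijections to $G_i\to H_i$ then yields a Duplicator strategy in the $(k+1)$-pebble game on the uncolored pair $(G_i,H_i)$. The initial $k$-WL colorings of a tuple in $G_i^{(k)}$ depend only on equations among the entries (Version~$\versone$) or on the ordered isomorphism type of the generated subgroup (Version~$\verstwo$), so they agree whether computed inside $G_i$ or inside $G$; consequently the restricted strategy remains winning, and Lemma~\ref{AlgoVSGame} gives $G_i\equiv^J_k H_i$.

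The main obstacle here is conceptual rather than computational: direct product decompositions are not canonical, so a~priori there is no way to single out the ``right'' subgroups of $H$. The trick is to use the $\chi^{J,k}_G$-induced coloring itself as the naming mechanism for candidate factors inside $H$; once this device is in place, the closure machinery of Lemma~\ref{lem:basicClosure} delivers the group-theoretic structural identities automatically, and the pebble-game restriction is essentially mechanical.
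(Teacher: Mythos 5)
Your proof is correct and takes essentially the same approach as the paper, which justifies the corollary via the preceding Example (the case $t=2$): define $H_i$ as the union of $\chi^{J,k}_H$-color classes matching those of $G_i$, and use Lemma~\ref{lem:basicClosure} to transfer $\langle G_i\rangle=G_i$, $[G_i,G_j]=\{1\}$ and $\langle G_1,\dots,G_t\rangle=G$ over to $H$, with $G_i\equiv^J_k H_i$ obtained by restricting the pebble game as in Theorem~\ref{MainThm1}. The only cosmetic difference is that you certify the internal direct product by the order count $\prod_i|H_i|=|H|$ rather than by detecting trivial intersections, which is in fact the cleaner route for general $t$.
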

In Section~\ref{SecDirectDecompositions} we will discuss the (much harder) case of arbitrary direct decompositions, without the assumption that each direct factor is detectable as a subgroup.

Thus far we considered concrete substructures inside of groups. For the remainder of this section we want to prove that Weisfeiler-Leman algorithms also take into account properties of quotients over detectable subgroups. 

The following combinatorial tool is attributed to van der Waerden, Sperner and K\"onig in~\cite{MIRSKY1966520} (see~\cite{MIRSKY1966520}, Theorem 6.2 and below).
\begin{lem}\label{InducedBijections}
	Let $f:A\to B$ be a bijection of finite sets and let $P:=\{A_1,\dots,A_t\}$ and $Q:=\{B_1,\dots,B_t\}$ be equipartitions of $A$ and $B$, i.e., $m:=|A_i|=|B_j|$ for all $i,j$. Then there exist $m$ full systems of representatives $R_1,\dots, R_m$ of $A$ modulo $P$ such that $A=\biguplus_{i=1}^m R_i$ and
	for all $i$, $f(R_i)$ is a system of representatives modulo $Q$. In particular, for each $i$, $f|_{R_i}$ induces a bijection $P\to Q$.
\end{lem}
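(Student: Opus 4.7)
The plan is to reduce the statement to a classical edge-coloring theorem for bipartite multigraphs. Concretely, I will encode the bijection $f$ together with the two equipartitions as a bipartite multigraph whose edges are indexed by $A$ and whose perfect matchings give exactly the type of representative systems we want.

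First I would build the bipartite multigraph $\Gamma$ whose two vertex parts are $P=\{A_1,\dots,A_t\}$ and $Q=\{B_1,\dots,B_t\}$, and which has one edge $e_a$ for every $a\in A$, joining the unique block $A_i\ni a$ on the $P$-side to the unique block $B_j\ni f(a)$ on the $Q$-side. Since $f$ is a bijection and $|A_i|=m$, the degree of the vertex $A_i\in V(\Gamma)$ equals $|A_i|=m$; similarly, $\deg_\Gamma(B_j)=|f^{-1}(B_j)|=|B_j|=m$. Hence $\Gamma$ is a bipartite multigraph in which every vertex has degree exactly $m$.

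Next I would invoke König's edge-coloring theorem: every bipartite (multi)graph admits a proper edge coloring using $\Delta$ colors, where $\Delta$ is the maximum degree. Applied to $\Gamma$ with $\Delta=m$, this yields a partition of $E(\Gamma)$ into $m$ color classes $C_1,\dots,C_m$, each of which is a matching. Since every vertex has degree exactly $m$ and each of the $m$ colors can appear at most once at any vertex, each $C_i$ must in fact be a \emph{perfect} matching of $\Gamma$. Pulling this coloring back through the identification $E(\Gamma)\leftrightarrow A$, I set $R_i:=\{a\in A\mid e_a\in C_i\}$. Then $A=\biguplus_{i=1}^m R_i$; the fact that $C_i$ hits each vertex of the $P$-side exactly once says that $R_i$ contains exactly one element of each $A_j$, i.e., $R_i$ is a full system of representatives modulo $P$; and the fact that $C_i$ hits each vertex of the $Q$-side exactly once says that $f(R_i)$ is a full system of representatives modulo $Q$. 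The induced bijection $P\to Q$ sends $A_j$ to the unique block $B_{j'}$ containing $f(R_i\cap A_j)$, finishing the proof.

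The only non-trivial ingredient is König's edge-coloring theorem, which can itself be proved by iterated application of Hall's marriage theorem; if one prefers to avoid citing it, the same conclusion can be obtained directly by verifying that $\Gamma$ satisfies Hall's condition (any union of $s$ blocks in $P$ contains $sm$ elements and is mapped by $f$ into a set of $sm$ elements, so meets at least $s$ blocks of $Q$), extracting a perfect matching, deleting it, and iterating $m$ times on the remaining $(m-1)$-regular bipartite multigraph. The main conceptual step, and the only place that needs care, is the translation between systems of representatives for $(P,Q)$ and perfect matchings in $\Gamma$; once that dictionary is set up, the regularity of $\Gamma$ does all the work.
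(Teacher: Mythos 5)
Your proof is correct. The paper does not actually prove this lemma -- it cites it as a classical result of van der Waerden, Sperner and K\"onig (Theorem 6.2 in Mirsky's survey) -- and your argument is precisely the standard proof of that result: encode $f$ and the two equipartitions as an $m$-regular bipartite multigraph on $P\uplus Q$, decompose it into $m$ perfect matchings via K\"onig's edge-coloring theorem (or iterated Hall), and translate each matching back into a common system of representatives. The dictionary between matchings and representative systems is set up correctly, and the observation that in an $m$-regular multigraph every one of the $m$ color classes must be a perfect matching is the right closing step.
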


\begin{dfn}
	Given a coloring $\gamma:G\to\mathcal{C}$ and a normal subgroup $N\trianglelefteq G$ define the induced \textbf{quotient coloring} $\bar{\gamma}$ on $G/N$ via $\bar{\gamma}(gN):=\{\!\{\gamma(gn)\mid n\in N\}\!\}$.
\end{dfn}

\begin{lem}\label{QuotientColoringLem}
	Let $k\geq 4$ and consider colored groups $(G,\gamma_G)$ and $(H,\gamma_H)$. Assume that there are normal subgroups $N_G\trianglelefteq G$ and $N_H\trianglelefteq H$ which are induced by $\gamma_G$ and $\gamma_H$, respectively, such that $\gamma_G(N_G)=\gamma_H(N_H)$. Then
	\[
		\chi^{\versone,k}_{\bar{\gamma_G}}(g_1N_G,\dots,g_kN_G)\neq \chi^{\versone,k}_{\bar{\gamma_H}}(h_1N_H,\dots,h_kN_H)\Longrightarrow  \chi^{\versone,k}_{\gamma_G}(g_1,\dots,g_k)\neq \chi^{\versone,k}_{\gamma_H}(h_1,\dots,h_k)
	\] for all choices of $g_i\in G$ and $h_i\in H$.
\end{lem}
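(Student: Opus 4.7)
The plan is to establish the statement by passing to the bijective pebble game of Lemma~\ref{AlgoVSGame} and proving the contrapositive: if Duplicator has a winning strategy in the $(k+1)$-pebble game (Version~$\versone$) on $(G,H)$ starting from $[(g_1,\dots,g_k,\perp),(h_1,\dots,h_k,\perp)]$, then Duplicator also wins the $(k+1)$-pebble game on $(G/N_G, H/N_H)$ from the projected configuration $[(g_1N_G,\dots,g_kN_G,\perp),(h_1N_H,\dots,h_kN_H,\perp)]$. We fix an abstract whole-game winning strategy $\sigma$ and construct a quotient-game strategy $\bar\sigma$ by simulation.

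First, I would record that any bijection $f:G\to H$ that $\sigma$ plays can be assumed to preserve $\gamma$-colors, and in particular to satisfy $f(N_G)=N_H$. Indeed, since $N_G$ and $N_H$ are $\gamma$-induced with $\gamma_G(N_G)=\gamma_H(N_H)$, any violation $g\in N_G$ with $f(g)\notin N_H$ yields $\gamma_G(g)\neq\gamma_H(f(g))$, and with $k\geq 4$ Spoiler has at least two spare pebble pairs to pebble $(g,f(g))$ and force a winning initial-coloring check in the next round. Similarly one may assume $|G|=|H|$ and $|N_G|=|N_H|$. Given a bijection $f$ with $f(N_G)=N_H$, Lemma~\ref{InducedBijections} applied to the coset-equipartitions of $G$ and $H$ yields systems of representatives $R_1,\dots,R_{|N_G|}$ of $G$ modulo $N_G$ with $G=\biguplus_i R_i$ such that each restriction $f|_{R_i}$ induces a coset bijection $\bar f:G/N_G\to H/N_H$. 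Pick one, say $\bar f:=f|_{R_1}$, and have $\bar\sigma$ play $\bar f$ on the quotient game. When Spoiler places $p_i$ on the coset $xN_G$ and $p_i'$ on $\bar f(xN_G)=f(x)N_H$, we update the simulated whole-game by taking the unique representative $r\in R_1\cap xN_G$ and placing $p_i$ on $(r,f(r))$; this maintains the invariant that every simulated whole-pebble lies inside the corresponding pebbled coset on both sides.

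Second, I would verify that the winning condition transfers. Assume after some pickup of $p_i$ in the quotient game the initial $(k-1)$-coloring differs; by assumption the simulated whole-game coloring agrees, so $\gamma_G(r_j)=\gamma_H(f(r_j))$ and all equalities and products among pebbled whole elements match between the two sides. The coset equality pattern $r_jN_G=r_mN_G \Leftrightarrow f(r_j)N_H=f(r_m)N_H$ and the coset multiplication pattern $(r_jN_G)(r_mN_G)=r_\ell N_G \Leftrightarrow (f(r_j)N_H)(f(r_m)N_H)=f(r_\ell)N_H$ hold because the representatives stay in the bijectively-matched cosets determined by $\bar f$. Finally, the multiset colors $\bar\gamma_G(r_jN_G)=\{\!\{\gamma_G(z):z\in r_jN_G\}\!\}$ and $\bar\gamma_H(f(r_j)N_H)$ coincide because $f$ is color-preserving and restricts (again via Lemma~\ref{InducedBijections}) to a color-preserving bijection of the whole coset $r_jN_G$ onto $f(r_j)N_H$. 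Together these contradict the assumed quotient-coloring mismatch, so $\bar\sigma$ is indeed winning.

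\textbf{Main obstacle.} The most delicate point is the transfer of the multiset-valued quotient color $\bar\gamma$ and of coset multiplication, since the whole-game initial coloring records only equalities and products among the pebbled elements, not arbitrary products such as $r_j^{-1}r_m$ or memberships in $N_G$. The remedy is to exploit the spare pebbles provided by $k\geq 4$: Spoiler can force short pebble-chain arguments that turn any mismatch between the $\gamma$-color distribution on a coset in $G$ and the $\gamma$-color distribution on its $\bar f$-image in $H$, or any coset-multiplication inconsistency, into a violation of an initial-coloring check within one round. Carefully threading these short chains, using the color-induction hypothesis $\gamma_G(N_G)=\gamma_H(N_H)$ to detect $N_G$-membership from $\gamma$-colors, is the technical heart of the argument and is where the bound $k\geq 4$ is used.
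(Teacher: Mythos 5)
Your proposal is correct and is essentially the paper's own argument run in the dual direction: the paper lifts a Spoiler winning strategy from the quotient game to the whole game, while you push a Duplicator winning strategy from the whole game down to the quotient game, but both rest on the same ingredients — Lemma~\ref{InducedBijections} to turn a color-preserving bijection $f:G\to H$ into a coset bijection, the invariant that pebbled elements lie in the pebbled cosets, and the endgame in which a mismatch of quotient colors, coset equalities, or coset products is converted with the spare pebbles ($k\geq 4$) into an initial-coloring violation via $\gamma$-detectability of $N_G$ and $N_H$. Note only that your second paragraph's claim that the coset equality and multiplication patterns hold "because the representatives stay in bijectively-matched cosets" is not by itself a justification (pebbles placed in different rounds use different bijections); the real justification is exactly the three-case spare-pebble argument you defer to the "main obstacle" paragraph, which the paper carries out explicitly.
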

\begin{proof}
	Using Lemma~\ref{AlgoVSGame}, we argue via the corresponding $(k+1)$-pebble games. The idea is to lift a winning strategy for Spoiler from the game on $(G/N_G,H/N_H)$ to $(G,H)$, where initial configurations are given by $[(g_1N_G,\dots,g_kN_G,\perp),(h_1N_H,\dots,h_kN_H,\perp)]$ and 
	$[(g_1,\dots,g_k,\perp),(h_1,\dots,h_k,\perp)]$, respectively.
	
	If $|N_G|\neq |N_H|$, Duplicator can not even win on $(G,H)$ from the empty configuration, since by assumption
	$N_G$ and $N_H$ are induced by $\gamma_G$ and $\gamma_H$, respectively, and obtain the same colors. We may therefore assume that $|N_G| =|N_H|$. 
	
	By assumption, Spoiler has a winning strategy on the quotients by picking up the $i$-th pebble pair, say. Spoiler picks up the $i$-th pebble pair in the game on $(G,H)$ as well.
	Consider a subsequent Duplicator move $f:G\to H$. By the previous lemma, there are representatives modulo $N_G$, $r^G_1,\dots,r^G_t$ say, that are mapped to a full set of representatives for $H/N_H$, $r^H_i :=f(r^G_i)$ say. The representatives define a bijection $\bar{f}:G/N_G\to H/N_H, r^G_iN_G\mapsto r^H_iN_H$. If $\bar{f}$ is used as a Duplicator move in the game on $(G/N_G,H/N_H)$, then Spoiler has a corresponding winning strategy by placing the $i$-th pebble pair on $(r^G_jN_G,r^H_jN_H)$, say. In the game on $(G,H)$, Spoiler places the $i$-th pebble pair on $(r^G_j,r^H_j)$. The new configurations reached in the two games we consider relate to each other in the same way the initial configurations do: If the $m$-th pebble pair on $(G,H)$ is placed on $(g,h)$ then the $m$-th pebble pair on $(G/N_G,H/N_H)$ is placed on $(gN_G,hN_H)$. Spoiler can iteratively employ this strategy until eventually a configuration $[(g'_1,\dots,g'_{k+1}),(h'_1,\dots h'_{k+1})]$ is reached such that the corresponding configuration on the quotients is winning for Spoiler. Then we are in one of three cases: Either 
$\bar{\gamma_G}(g'_iN_G)\neq\bar{\gamma_H}(h'_iN_H)$ for some $i$, or there exist $i,j$ with $g'_i=g'_j$ modulo $N_G$ and $h'_i\neq h'_j$ modulo $N_H$, or there exist $i,j,m$ with $g'_ig'_j=g'_m$ modulo $N_G$ and $h'_ih'_j\neq h'_m$ modulo $N_H$ (the last two cases could occur with $G$ and $H$ interchanged but this would not affect the proof). 

In the first case, for each bijection $f':g'_iN_G\to h'_iN_H$ there is some $n\in N_G$ with
$\gamma_G(g'_in)\neq\gamma_H(f'(g'_in))$. Thus, if Duplicator maps $g'_iN_G$ to $h'_iN_H$ Spoiler can win in one move by exploiting the mismatched colors. Otherwise Duplicator maps some $g\in g'_iN_G$ to $h\in H\setminus h'_iN_H$ and Spoiler can put a pebble pair $(p_j,p'_j)$ on $(g,h)$ for some $j\neq i$. Then $g'_ig^{-1}\in N_G$ and 
$h'_ih^{-1}\notin N_H$. Since $k\geq 4$, Spoiler can use additional pebbles to successively fix $g^{-1}$ and $g'_ig^{-1}$. Then $g'_ig^{-1}$ must be mapped to $h'_ih^{-1}$ (or otherwise Duplicator can not respect the multiplication relation on the current pebbles) and then $\gamma_G(g'_ig^{-1})\neq \gamma_H(h'_ih^{-1})$.

In the second case $g'_i{g'_j}^{-1}\in N_G$ and $h'_i{h'_j}^{-1}\notin N_H$ and we end up in the same situation we just discussed.

In the third case $g'_ig'_j{g'_m}^{-1}\in N_G$ but $h'_ih'_j{h'_m}^{-1}\notin N_H$. Spoiler can first put a fourth pebble pair on $(g'_ig'_j,h'_ih'_j)$ (as above, if Duplicator does not map $g'_ig'_j$ to $h'_ih'_j$, Spoiler can win immediately) and then we end up in the situation we encountered at the end of the first case again.
\end{proof}

We collect the previous results in our first main theorem which states that whenever $G\equiv^{\versone}_k H$ holds, there is a color preserving correspondence between detected substructures of $G$ and detected substructures of $H$.
\begin{thm}\label{MainThm1}
	Let $k$ be at least $4$.
	\begin{enumerate}
		\item Consider subset selectors $N,U$ and $U/N$ such that for all $(G,\gamma)$ it holds that $N(G,\gamma)\trianglelefteq G$, $N(G,\gamma)\leq U(G,\gamma)$ and $U/N(G/N(G),\bar{\gamma})=U(G)/N(G)$.
		If $N$ and $U/N$ are $k$-WL$_{\versone}$-detectable then so is $U$.
		\item Consider colored groups $(G,\gamma_G)\equiv^{\versone}_k (H,\gamma_H)$. Let $\Psi:G\to H$ be a bijection with
		$(\chi^{\versone,k}_{\gamma_G})^{(G)}\circ\Psi=(\chi^{\versone,k}_{\gamma_H})^{(H)}$. Then $M\subseteq G$ is $\chi^{\versone,k}_{\gamma_G}$-induced if and only if $\Psi(M)\subseteq H$ is $\chi^{\versone,k}_{\gamma_H}$-induced. In this case
		it holds that
		\[\Psi(\langle M\rangle)=\langle \Psi(M)\rangle
		\]
		In particular, if $M$ is a subgroup then so is $\Psi(M)$ and it holds
		\[
			(M,\gamma_G|_M)\equiv^{\versone}_k (\Psi(M),\gamma_H|_{\Psi(M)}).
		\]Additionally, $M$ is normal if and only if $\Psi(M)$ is and then
		it also holds that
		\[
			(G/M,\bar{\gamma_G})\equiv^{\versone}_k (H/\Psi(M),\bar{\gamma_H}).
		\]
	\end{enumerate}
\end{thm}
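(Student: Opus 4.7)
The plan is to prove the two parts separately, with Part~1 providing the main technical content and Part~2 following from the closure properties of detectability (Lemma~\ref{lem:basicClosure}) and the pebble-game characterization (Lemma~\ref{AlgoVSGame}).

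For Part~1, let $(G,\gamma_G),(H,\gamma_H)$ be colored groups and consider $g\in U(G,\gamma_G)$ and $h\in H\setminus U(H,\gamma_H)$; the goal is to show $\chi^{\versone,k}_{\gamma_G}(g)\neq\chi^{\versone,k}_{\gamma_H}(h)$. The compatibility condition $U/N(G/N(G),\bar{\gamma})=U(G)/N(G)$ forces $U(G)$ to be a union of $N(G)$-cosets, so $gN(G)\in U/N(G/N(G),\bar{\gamma_G})$ while $hN(H)\notin U/N(H/N(H),\bar{\gamma_H})$. Detectability of $U/N$ applied to the colored quotient groups then gives $\chi^{\versone,k}_{\bar{\gamma_G}}(gN(G))\neq\chi^{\versone,k}_{\bar{\gamma_H}}(hN(H))$. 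I would then invoke Lemma~\ref{QuotientColoringLem}, applied to the $k$-tuples $(g,1,\dots,1)$ and $(h,1,\dots,1)$ and combined with Lemma~\ref{DetectColorOfCoords}, to lift this distinction back to $g$ and $h$. To meet the hypotheses of Lemma~\ref{QuotientColoringLem}, I first replace $\gamma_G,\gamma_H$ by the refined element colorings $(\chi^{\versone,k}_{\gamma_G})^{(G)},(\chi^{\versone,k}_{\gamma_H})^{(H)}$ via Lemma~\ref{LemPreColorings} so that $N(G),N(H)$ become induced by the element colorings, and then coarsen the colors within $N(G),N(H)$ to a single shared marker color to force $\gamma(N(G))=\gamma(N(H))$; Lemma~\ref{LemPreColorings} again ensures that these modifications do not alter the stable $k$-WL$_\versone$ coloring.

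For Part~2, the unifying idea is that every $\chi^{\versone,k}$-induced subset of a group is realized by a $k$-WL$_\versone$-detectable subset selector, namely $\mathcal{M}(K,\gamma_K):=\{x\in K:\chi^{\versone,k}_{\gamma_K}(x)\in C\}$ with $C$ the set of $\chi$-colors of $M$. Because $\Psi$ preserves element colors, $\mathcal{M}(H,\gamma_H)=\Psi(M)$, and claim (a) is immediate. For claim (b), Lemma~\ref{lem:basicClosure} Part~3 (valid for $k\geq 4$ in Version~$\versone$) applied to $\mathcal{M}$ gives $\Psi(\langle M\rangle)=\langle\Psi(M)\rangle$, from which the subgroup case is immediate. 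Normality is preserved by applying Lemma~\ref{lem:basicClosure} Part~4 to the normal closure of $\mathcal{M}$: $M$ is normal iff $M=\langle M^G\rangle$, and this equation is preserved by $\Psi$. For the subgroup $k$-WL$_\versone$ equivalence I refine $\gamma_G,\gamma_H$ by adjoining the indicators $[x\in M]$ and $[y\in\Psi(M)]$; by Lemma~\ref{LemPreColorings} this refinement preserves the stable coloring, so the equivalence $(G,\gamma_G)\equiv^{\versone}_k(H,\gamma_H)$ persists while $M$ and $\Psi(M)$ become $\gamma$-induced. In the $(k+1)$-pebble game on the refined colored groups, every Duplicator-winning bijection must map $M$ to $\Psi(M)$, so restricting the game to pebbles in these subgroups yields a Duplicator winning strategy on $(M,\gamma_G|_M)$ versus $(\Psi(M),\gamma_H|_{\Psi(M)})$, whose winning condition depends only on internal data. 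The quotient equivalence $(G/M,\bar{\gamma_G})\equiv^{\versone}_k(H/\Psi(M),\bar{\gamma_H})$ is obtained symmetrically: Duplicator bijections $G\to H$ are descended to bijections $G/M\to H/\Psi(M)$ via Lemma~\ref{InducedBijections}, and the descent respects the winning condition on quotients thanks to the indicator refinement ensuring that bijections on $G$ consistently map $M$-cosets to $\Psi(M)$-cosets.

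The main obstacle is the coloring manipulation in Part~1: detectability of $N$ only guarantees $\gamma(N(G))\cap\gamma(H\setminus N(H))=\emptyset$, not equality of the color sets $\gamma(N(G))$ and $\gamma(N(H))$, so a careful coarsening within $N(G),N(H)$ is required to satisfy the hypothesis of Lemma~\ref{QuotientColoringLem} while preserving the stable coloring. In Part~2, the pebble-game arguments are more routine given the machinery, but the subgroup and quotient cases both depend on a preliminary indicator refinement to move $M$ (or its cosets) from being merely $\chi$-induced to being $\gamma$-induced, so that Duplicator's bijections are forced to respect the structure in question.
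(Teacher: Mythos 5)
Your proposal is correct and follows essentially the same route as the paper: Part~1 via detectability of $U/N$ on the quotient combined with Lemma~\ref{QuotientColoringLem} and Lemma~\ref{LemPreColorings}, and Part~2 via the subset-selector-from-colour-classes device, Lemma~\ref{lem:basicClosure}, and restricted/descended pebble games. In Part~1 you are in fact more careful than the paper, which applies Lemma~\ref{QuotientColoringLem} with both groups equal to $(G,(\chi^{\versone,k}_{\gamma})^{(G)})$ and so, read literally, only establishes the within-group separation of $U(G)$ from $G\setminus U(G)$; you treat the genuine cross-group pair and correctly identify that the hypothesis $\gamma_G(N_G)=\gamma_H(N_H)$ of Lemma~\ref{QuotientColoringLem} has to be arranged.

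One small repair is needed in exactly that step. After merging all of $N(G)$ (resp.\ $N(H)$) into a single marker colour, the resulting colouring $\gamma'$ is \emph{coarser} than $(\chi^{\versone,k}_{\gamma})^{(G)}$ on $N(G)$, so the sandwich condition $(\chi^{\versone,k}_{\gamma})^{(G)}\preceq\gamma'\preceq\gamma$ of Lemma~\ref{LemPreColorings} fails, and your claim that the modification ``does not alter the stable colouring'' is false in general: for $G=N(G)=\mathbb{Z}_3$ with $\gamma$ separating the two generators, the merged colouring yields a strictly coarser stable partition, since the generators are then swapped by a colour-preserving automorphism. Fortunately you only need one direction, namely that every distinction made by $\chi^{\versone,k}_{\gamma'}$ is also made by $\chi^{\versone,k}_{\gamma}$; this follows from $(\chi^{\versone,k}_{\gamma})^{(G)}\preceq\gamma'$ together with monotonicity of WL under coarsening of the initial colouring and idempotence of the stable colouring (the latter being the content of Lemma~\ref{LemPreColorings}). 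With that justification substituted, the argument is sound; the remainder of Part~2, including the indicator refinement (where Lemma~\ref{LemPreColorings} does apply, as $M$ is $\chi^{\versone,k}_{\gamma_G}$-induced) and the coset-descent via Lemma~\ref{InducedBijections}, matches the paper's reasoning.
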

\begin{proof}
	\begin{enumerate}
		\item Since $(\chi^{\versone,k}_{\gamma})^{(G)}$ is a refinement of $\gamma$,
		the detectability of $U/N$ implies that, with the quotient coloring induced on $G/N(G)$ by $(\chi^{\versone,k}_{\gamma})^{(G)}$, $k$-WL$_\versone$ distinguishes elements $uN(G)$ with $u\in U$ from elements $xN(G)$ with $x\in G\setminus U$.
		Since $N$ is also $k$-WL$_\versone$-detectable, we are in the situation of Lemma~\ref{QuotientColoringLem} with both groups equal to $(G,(\chi^{\versone,k}_{\gamma})^{(G)})$ which implies $\chi^{\versone,k}_{(\chi^{\versone,k}_{\gamma})^{(G)}}(u)\neq \chi^{\versone,k}_{(\chi^{\versone,k}_{\gamma})^{(G)}}(x)$ for all $u\in U$ and $x\in G\setminus U$. Via Lemma~\ref{LemPreColorings} we obtain
	$\chi^{\versone,k}_{\gamma}(u)\neq \chi^{\versone,k}_\gamma(x)$ for all $u\in U$ and $x\in G\setminus U$, so $U$ is detectable by $k$-WL$_\versone$.
	
		\item By definition, $\Psi$ maps $k$-WL$_\versone$-color classes in $G$ to
		$k$-WL$_\versone$-color classes in $H$. Given a $(\chi^{\versone,k}_{\gamma_G})^{(G)}$-induced subset $M\subseteq G$, define a subset selector $\mathcal{S}_M$ by associating with a colored group $K$ the preimage of $(\chi^{\versone,k}_{\gamma_G})^{(G)}(M)$ in $K$. By definition, $\mathcal{S}_M$ is $k$-WL$_\versone$-detectable and $\Psi(M)=\mathcal{S}_M(H,\gamma_H)$. Thus, by Lemma~\ref{lem:basicClosure}, the groups generated by $M$ and $\Psi(M)$ are $k$-WL$_\versone$-detectable and since $(G,\gamma_G)\equiv^{\versone}_k (H,\gamma_H)$, $\langle M\rangle$ and $\langle \Psi(M)\rangle$ must also be indistinguishable via $k$-WL$_\versone$, i.e., $\Psi(\langle M\rangle)=\langle\Psi(M)\rangle$. Furthermore, in the pebble game on $(G,H)$ Spoiler could restrict their moves to $(\langle M\rangle,\langle\Psi(M)\rangle)$, hence $(\langle M\rangle,\gamma_G|_{\langle M\rangle})\equiv^{\versone}_k (\langle\Psi(M)\rangle,\gamma_H|_{\langle \Psi(M)\rangle})$. Analogous arguments work for normal closures instead of generated subgroups. In particular, $M$ is a normal subgroup of $G$ if and only if $\Psi(M)$ is a normal subgroup of $H$.
For the last claim that $(G/M,\bar{\gamma_G}|_{M})\equiv^{\versone}_k (H/\Psi(M),\bar{\gamma_H}|_{\Psi(M)})$, note that otherwise all elements of $(G/M,\bar{\gamma_G}|_{M})$ would be distinguishable from all elements of $(H/\Psi(M),\bar{\gamma_H}|_{\Psi(M)})$ via $k$-WL$_\versone$ and then by Lemma~\ref{QuotientColoringLem}, $(G,\gamma_G)$ and $(H,\gamma_H)$ would be distinguishable as well, contradicting the assumptions. \qedhere
	\end{enumerate}
\end{proof}

\begin{obs}
	For uncolored groups $G$ and $H$, the $k$-WL-induced subgroups 
	are always characteristic subgroups. In this case
	the previous theorem states a correspondence $C_G\equiv_k^\versone C_H$ between $k$-WL induced characteristic subgroups in $G$ and $H$, respectively, as well as a correspondence $G/C_G\equiv_k^\versone H/C_H$ of their respective quotients.
\end{obs}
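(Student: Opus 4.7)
The plan is to derive both assertions essentially from what has already been set up, with Theorem~\ref{MainThm1} doing the heavy lifting. The observation is stated as a corollary-style consequence, so I do not expect the argument to require new techniques; the main point is to verify that the uncolored setting delivers characteristic subgroups rather than merely automorphism-invariant ones for a specific subgroup of the automorphism group.

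First I would treat characteristicity. For uncolored $G$ we may take $\gamma$ to be the trivial coloring, so $\mathrm{Aut}_\gamma(G)=\mathrm{Aut}(G)$. By the construction of the initial colorings (both $\chi^{\versone,k}_{G,0}$ and $\chi^{\verstwo,k}_{G,0}$) and the fact that the refinement step $\mathcal{M}(\bar g)$ is manifestly invariant under applying any automorphism coordinatewise, an easy induction shows that $\chi^{J,k}_G$ is $\mathrm{Aut}(G)$-invariant, i.e., every color class of $G^{(k)}$ is permuted setwise by $\varphi^{\times k}$ for all $\varphi\in\mathrm{Aut}(G)$. In particular, the induced element coloring $(\chi^{J,k}_G)^{(G)}$ is $\mathrm{Aut}(G)$-invariant. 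Any $\chi^{J,k}_G$-induced subset $C_G\subseteq G$ is a union of element-color classes, so $\varphi(C_G)=C_G$ for every $\varphi\in\mathrm{Aut}(G)$; if additionally $C_G\leq G$, then $C_G$ is characteristic.

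Next I would produce the correspondence. Assume $G\equiv^{\versone}_k H$ and fix a bijection $\Psi\colon G\to H$ with $(\chi^{\versone,k}_G)^{(G)}=(\chi^{\versone,k}_H)^{(H)}\circ\Psi$, which exists by the definition of $\equiv^{\versone}_k$ together with Lemma~\ref{DetectColorOfCoords}(2). Setting $C_H:=\Psi(C_G)$, Theorem~\ref{MainThm1}(2) immediately gives that $C_H$ is $\chi^{\versone,k}_H$-induced, is a subgroup of $H$ (with $\Psi(\langle C_G\rangle)=\langle C_H\rangle=C_H$), and satisfies
\[
(C_G,\gamma_G|_{C_G})\equiv^{\versone}_k (C_H,\gamma_H|_{C_H}).
\]
In the uncolored case the restricted colorings are again trivial up to a global relabeling, so this yields $C_G\equiv^{\versone}_k C_H$. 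By the first part, $C_H$ is characteristic in $H$. Normality of $C_G$ (which follows from characteristicity) transports to normality of $C_H$ via Theorem~\ref{MainThm1}(2), and the same theorem then yields $G/C_G\equiv^{\versone}_k H/C_H$, again because the induced quotient colorings agree with the trivial coloring up to renaming in the uncolored setting.

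There is really no hard step here; the only pitfall is to be careful with the (implicit) passage between the trivial coloring of an uncolored group and the stable coloring that refines it. I would handle this with a short appeal to Lemma~\ref{LemPreColorings} to justify that restricting or quotienting by a $k$-WL-induced subgroup and then running $k$-WL anew produces the same partition as restricting/quotienting the stable coloring, so the equivalences $\equiv^{\versone}_k$ above genuinely refer to uncolored groups as claimed.
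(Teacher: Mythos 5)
Your proposal is correct and matches the paper's intent: the paper leaves this Observation unproved precisely because it follows from the $\mathrm{Aut}(G)$-invariance of the stable coloring (noted earlier after the definition of detectability) together with Theorem~\ref{MainThm1}(2), which is exactly the route you take. Your extra care about trivial colorings being preserved under restriction and quotienting, via Lemma~\ref{LemPreColorings}, is a reasonable way to tie up the only loose end.
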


Finally let us to point out that detectable substructures can be used to limit Duplicator-strategies. This technique will be needed towards the main result of Section~\ref{SecDirectDecompositions}. More precisely, we now show that Spoiler can ``trade off'' one pebble pair to enforce that Duplicator's bijections are simultaneously compatible with detectable substructures in the following sense.  

\begin{lem}\label{ResepctSubgroupChains}
	Let $k\geq 3$ and $J\in\{\versone,\verstwo\}$. Consider groups $G$ and $H$ with $G\equiv^J_k H$, so
	Duplicator has a winning strategy in the $(k+1)$-pebble game (Version $J$).
	Assume $\chi^{J,k}_G$ and $\chi^{J,k}_H$ induce chains of subgroups $G_s\leq \dots\leq G_1\leq G$ and $H_s\leq \dots\leq H_1\leq H$, respectively, such that $\chi^{J,k}_{G}(G_i)=\chi^{J,k}_H(H_i)$ for all $i$.
	Then Duplicator has a winning strategy in the $k$-pebble game (Version $J$) on $(G,H)$ such that each bijection
	$f:G\to H$ chosen by Duplicator's strategy fulfills the following condition:
	\[ 
		\circledast\ \forall x\in G\ \forall i: f(xG_i)=f(x)H_i.
	\]
\end{lem}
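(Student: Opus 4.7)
The plan is to derive Duplicator's desired strategy from her winning strategy in the $(k+1)$-pebble game (Version $J$), which exists by the hypothesis $G\equiv^J_k H$ together with Lemma~\ref{AlgoVSGame}. Duplicator maintains in parallel a shadow $(k+1)$-pebble game, where the $(k+1)$-th pebble pair is permanently fixed to $(\perp,\perp)$; every move Spoiler makes in the actual $k$-game is mirrored in the shadow, so the shadow configuration remains winning for Duplicator at all times. When Duplicator needs to propose a bijection $f$ in the $k$-game, she starts from the winning shadow bijection $f^{*}$ produced by her shadow strategy and modifies it into a $\circledast$-respecting bijection $\tilde f$.

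To construct $\tilde f$ from $f^{*}$, I proceed by iteratively applying Lemma~\ref{InducedBijections} along the chain $G_s\leq\dots\leq G_1$. For each index $i$, the equipartitions of $G$ into $G_i$-cosets and of $H$ into $H_i$-cosets have equally sized parts (since $\chi^{J,k}_G(G_i)=\chi^{J,k}_H(H_i)$ together with $G\equiv^J_k H$ forces $|G_i|=|H_i|$ and $[G:G_i]=[H:H_i]$). The lemma then produces a full system of representatives $R$ of $G$ modulo $G_i$ such that $f^{*}(R)$ is a full system of representatives of $H$ modulo $H_i$; this induces a bijection of $G_i$-cosets to $H_i$-cosets, and I then redistribute values within each coset pair to enforce $\tilde f(xG_i)=\tilde f(x)H_i$. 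Iterating from $G_s$ upward and using that $G_i\leq G_{i-1}$ (so $G_i$-cosets refine $G_{i-1}$-cosets) yields a single $\tilde f$ simultaneously satisfying $\circledast$ for all $i$.

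The main obstacle is verifying that $\tilde f$ remains a winning bijection in the shadow $(k+1)$-game, so that Duplicator's $k$-game strategy really wins. The crucial observation is that the $k$-WL refinement already encodes the coset structure tightly: if an element $y$ has been pebbled to $f^{*}(y)$ in the shadow game, then any valid placement $(x,h)$ with $x\in yG_i$ is forced to have $h\in f^{*}(y)H_i$. Otherwise the stable $2$-tuple color of $(x,y)$ would differ from that of $(h,f^{*}(y))$, because $x^{-1}y\in G_i$ has stable color in $\chi^{J,k}_G(G_i)=\chi^{J,k}_H(H_i)$ while $h^{-1}f^{*}(y)\notin H_i$ would have stable color outside this set ($H_i$ being $\chi^{J,k}_H$-induced). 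Consequently, the rearrangement from $f^{*}$ to $\tilde f$ only permutes images within individual $H_i$-cosets, and each such permutation preserves the shadow-winning property. Combining this with the construction above yields Duplicator's $\circledast$-respecting winning strategy in the $k$-pebble game.
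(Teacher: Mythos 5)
Your overall architecture (derive the $k$-game strategy from the $(k+1)$-game winning strategy, use Lemma~\ref{InducedBijections} to match up cosets, then repair the bijection) matches the paper's, and you correctly identify that the crux is showing the repaired bijection is still winning. But your resolution of that crux has a genuine gap. You construct $\tilde f$ by taking the winning bijection $f^{*}$ and ``redistributing values within each coset pair,'' and you justify this with the observation that any winning bijection is \emph{forced} to map $xG_i$ into $f^{*}(r_x)H_i$ once a representative is pebbled. That observation is a \emph{necessary} condition on winning bijections; it does not imply the \emph{sufficient} direction you need, namely that an arbitrary coset-respecting permutation of the images of a winning bijection is again winning. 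Concretely: knowing that $(x,f^{*}(x))$ and $(x',f^{*}(x'))$ each lead to configurations where Duplicator wins tells you nothing about the configuration reached by pebbling $(x,f^{*}(x'))$, even when $x$ and $x'$ lie in the same $G_i$-coset. Elements of the same coset (and even of the same stable color class) can be distinguished by Spoiler once the other pebbles are taken into account, so ``permutes images within individual $H_i$-cosets'' does not preserve the winning property.

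The paper closes exactly this gap by actually spending the spare $(k+1)$-th pebble rather than keeping it parked on $(\perp,\perp)$: for each coset $gG_{i_0+1}$ with representative $r_g$, one imagines Spoiler placing the free pebble pair on $(r_g, f(r_g))$ and picking up another; Duplicator's winning \emph{response} $f_g$ to that hypothetical move is then used to define $F(g):=f_g(g)$ on the whole coset. Because each value of $F$ comes from a bijection $f_g$ that is itself a winning move in a configuration reachable from the current one, every configuration reachable via $F$ is winning --- this is the step your permutation argument cannot supply, and it is the reason the lemma trades $k+1$ pebbles for $k$. (A secondary point: you iterate ``from $G_s$ upward,'' i.e.\ finest to coarsest; the induction runs much more cleanly coarsest to finest, refining within already-matched cosets, as otherwise the later redistributions threaten the earlier ones.)
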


\begin{proof}
	Consider a configuration $c:=[(g_1,\dots,g_k),(h_1,\dots,h_k)]$ in the $k$-pebble game and assume that Duplicator has a winning strategy in this configuration, even in the $(k+1)$-pebble game. 
	
	\textbf{Claim 1:} For all $1\leq j\leq k$ there is a bijection $F^{(j)}:G\to H$ with the following properties:
	\begin{enumerate}
		\item after Spoiler picks up the $j$-th pebble pair in configuration $c$, Duplicator can
		play $F^{(j)}$ as a winning move in the $(k+1)$-pebble game.
		\item $\forall x\in G\ \forall i\in\{1,\ldots,s\}: F^{(j)}(xG_i)= F^{(j)}(x)H_i$.
	\end{enumerate}
	We show Claim 1 by induction on~$s$. The case $s=1$ follows in analogy to the induction step. Thus assume that, after Spoiler picks up the $j$-th pebble pair, Duplicator has a winning move $f:G\to H$ in the $(k+1)$-pebble game such that for some fixed $i_0$ it holds that
	\[
		\forall x\in G\ \forall i\leq i_0: f(xG_i)=f(x)H_i.
	\]
	We construct a new bijection $F:G\to H$ that satisfies the desired properties from Claim~1 for all $i\leq i_0+1$. By Lemma~\ref{InducedBijections}, there is a system of representatives $R$ of $G$ modulo $G_{i_0+1}$ such that $f(R)$ is a set of representatives of $H_{i_0+1}$ in $H$. For $g\in G$ let $r_g\in (gG_{i_0+1})\cap R$ denote the representative of $g$. In the $(k+1)$-pebble game, Spoiler could place the $j$-th pebble pair on $(r(g),f(r(g))$ and then pick up the $(k+1)$-th pebble pair. In this case, since $f$ is a winning move for Duplicator, there exists a subsequent winning move $f_g:G\to H$ for Duplicator. We set $F(g):=f_g(g)$. By construction,
	$f_g$ only depends on the fixed representative $r_g$, so $f_g=f_{r_g}$ holds for all $g\in G$.
	
	Intuitively, Spoiler hypothetically marks a coset~$gG_{i_0+1}$ with an extra pebble pair and then~$f_g$ demonstrates how this coset should be mapped as a whole. The proof will now show that piecing together the different maps for all the cosets~$gG_{i_0+1}$ gives a bijection~$F$ with the desired properties.
	
	\paragraph{$F$ is bijective:} 
	Since $\chi^{\verstwo,k}_G(G_{i})=\chi^{\verstwo,k}_H(H_{i})$ 
	and since for all $g\in G$ the bijection $f_g$ is a winning move for Duplicator, it holds that $f_g(gG_{i})=f_g(g)H_{i}	$ for all $i\in \{1,\ldots,s\}$. In particular, for all $x\in G$ it holds that $f_{r_x}(r_xG_{i_0+1})=f_{r_x}(r_x)H_{i_0+1}=f(r_x)H_{i_0+1}$.
	By construction of $F$, for all $x\in G$ it holds that $F(xG_{i_0+1})=f_{r_x}(r_xG_{i_0+1})$. 
	Using the definition of $R$ we note that $r_x\neq r_y$ implies $f(r_x)H_{i_0+1}\cap f(r_y)H_{i_0+1}=\emptyset$.
	Altogether this implies
	\[		
		F(G)=\bigcup_{x\in G}F(xG_{i_0+1})=\bigcup_{x\in G}f_{r_x}(r_xG_{i_0+1})=\bigcup_{x\in G}f(r_x)H_{i_0+1}
	= H.
	\]	
	\paragraph{$F$ respects cosets:} Using the same arguments as in the bijectivity proof together with
	the assertion $\forall x\in G\ \forall i\leq i_0: f(xG_i)=f(x)H_i$ from the induction hypothesis, we obtain
	\[
	 	F(xG_i)=\bigcup_{g\in G_i} f(r_{xg})H_{i_0+1}=F(x)H_i
	\]for all $x\in G$ and for all $i\leq i_0+1$.
	\paragraph{$F$ is a winning move for Duplicator:} Consider configurations of the form 
	\[
		[(g_1,\dots,g_{j-1},g,g_{j+1},\dots,g_k),(h_1,\dots,h_{j-1},F(g),h_{j+1},\dots,h_k))]
	\] with $g\in G$. By construction $F(g)=f_g(g)$ where $f_g$ is a winning move for Duplicator (even with $(k+1)$ pebble pairs) from which the
	configuration above is reachable for Spoiler. Thus, in all configurations reachable from $c$ via $F$, 				Duplicator has a winning strategy. This proves Claim 1. \hfill$\blacktriangleleft$
	
	By Lemma~\ref{AlgoVSGame}, Duplicator has a winning strategy in the $k$-pebble game on the empty configuration
	and by Claim 1, Duplicator can always choose bijections according to condition $\circledast$.
\end{proof}

\section{WL-dimension of certain isomorphism invariants}\label{SecSpecificInvariants}
We turn our attention to combinatorial invariants and algebraic properties of groups in connection to WL. Recall that $k$-WL$_\verstwo$~explicitly computes isomorphism types of $k$-generated subgroups.
\begin{lem}
	For $k\geq 2$, $k$-WL$_{\verstwo}$ identifies all finite $k$-generated groups.
\end{lem}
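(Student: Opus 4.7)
The plan is to exploit the fact that the initial coloring of $k$-WL$_\verstwo$ directly encodes the ordered isomorphism type of the subgroup generated by a $k$-tuple. Since $G$ is $k$-generated, we can find a tuple $\bar{g}=(g_1,\dots,g_k)\in G^{(k)}$ with $\langle\bar{g}\rangle=G$, and the idea is that the color of $\bar{g}$ already ``knows'' the whole isomorphism type of $G$.

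More precisely, suppose $H$ satisfies $G\equiv^{\verstwo}_k H$. By definition of $\equiv^{\verstwo}_k$ there is a bijection $f:G^{(k)}\to H^{(k)}$ with $\chi^{\verstwo,k}_G=\chi^{\verstwo,k}_H\circ f$. The mere existence of $f$ forces $|G|^k=|H|^k$ and hence $|G|=|H|$. Set $\bar{h}:=f(\bar{g})=(h_1,\dots,h_k)$. Since the stable coloring refines the initial coloring, $\bar{g}$ and $\bar{h}$ share the same initial $k$-WL$_\verstwo$ color, which by definition means there exists a group isomorphism $\varphi:\langle\bar{g}\rangle\to\langle\bar{h}\rangle$ sending $g_i$ to $h_i$ for every $i$.

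Now $\langle\bar{g}\rangle=G$, so $\langle\bar{h}\rangle$ is a subgroup of $H$ isomorphic to $G$, and in particular $|\langle\bar{h}\rangle|=|G|=|H|$. Therefore $\langle\bar{h}\rangle=H$, and $\varphi:G\to H$ is the desired isomorphism. This shows every group not distinguished from $G$ by $k$-WL$_\verstwo$ is in fact isomorphic to $G$, i.e.\ $k$-WL$_\verstwo$ identifies $G$.

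There is essentially no obstacle here beyond unpacking the definitions: the entire content is that Version $\verstwo$ initial colors are ordered-isomorphism types of $k$-generated subgroups, and for a $k$-generated group the ``$k$-generated subgroup'' in question is the group itself. No refinement step beyond the initial coloring is actually needed, which is what makes the bound $k\geq 2$ (rather than something larger) work.
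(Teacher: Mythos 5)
Your proof is correct and is exactly the argument the paper intends: the lemma is stated immediately after the remark that $k$-WL$_\verstwo$ explicitly computes ordered isomorphism types of $k$-generated subgroups, and your write-up just unpacks that observation (initial color of a generating $k$-tuple determines the isomorphism type of the whole group, plus $|G|=|H|$ to conclude the image subgroup is all of $H$). Nothing further is needed.
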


\begin{lem}\label{lem:order_and_powers}
	Let $G$ be a finite group and $x,y\in G$. If $\chi^{\verstwo,2}_G(x)=\chi^{\verstwo,2}_G(y)$ then
	$|x|=|y|$ and for all $e\in\mathbb{N}$ the element $x$ is an $e$-th power in $G$ if and only if $y$ is.
\end{lem}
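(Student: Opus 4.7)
My plan is to derive both parts directly from the definition of the Version~\verstwo{} initial coloring together with Lemma~\ref{lem:basicClosure}. For the order claim I would first unfold the convention that $\chi^{\verstwo,2}_G(x)$ denotes the induced $1$-coloring, so $\chi^{\verstwo,2}_G(x)=\chi^{\verstwo,2}_G((x,1))$. Since the stable coloring refines the initial coloring, the hypothesis $\chi^{\verstwo,2}_G(x)=\chi^{\verstwo,2}_G(y)$ forces $\chi^{\verstwo,2}_{G,0}((x,1))=\chi^{\verstwo,2}_{G,0}((y,1))$. By the definition of the Version~\verstwo{} initial coloring this yields a group isomorphism $\varphi:\langle x,1\rangle\to\langle y,1\rangle$ with $\varphi(x)=y$, and since $\langle x,1\rangle=\langle x\rangle$ this gives $|x|=|\langle x\rangle|=|\langle y\rangle|=|y|$.

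For the $e$-th power part, my plan is to invoke Lemma~\ref{lem:basicClosure}, Part~1, applied to the identity subset selector $Id$. Since $Id$ is trivially $2$-WL$_\verstwo$-detectable, the subset selector $Id^e:(G,\gamma)\mapsto\{g^e\mid g\in G\}$, which associates to each colored group the set of its $e$-th powers, is $2$-WL$_\verstwo$-detectable as well. By the definition of detectability (specialized to $G=H$), every $\chi^{\verstwo,2}_G$-color class is either entirely contained in or entirely disjoint from the set of $e$-th powers of $G$. Thus $\chi^{\verstwo,2}_G(x)=\chi^{\verstwo,2}_G(y)$ forces $x$ to be an $e$-th power if and only if $y$ is.

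There is no real obstacle: both assertions are essentially bookkeeping consequences of facts that have already been established. The only point that needs minor care is verifying that Lemma~\ref{lem:basicClosure} Part~1 is available at the boundary dimension $k=2$, which it is, since Part~1 requires only $k\geq 2$ for Version~\verstwo{} (while Parts~3--6 would demand $k\geq 3$). Everything else reduces to unfolding definitions.
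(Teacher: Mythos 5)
Your proof is correct and follows essentially the same route as the paper: the order is read off from the Version~\verstwo{} initial coloring of $(x,1)$ and $(y,1)$, and the $e$-th power claim is exactly Lemma~\ref{lem:basicClosure}, Part~1 applied to $Id$. Your extra check that Part~1 is available at $k=2$ for Version~\verstwo{} is a nice touch but matches what the paper implicitly uses.
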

\begin{proof}
	The order of $x$ and $y$ is implicitly contained in the isomorphism type, i.e., the initial coloring, of $(x,1)$ and $(y,1)$. The second claim is a direct consequence of the fact that for all $e$, the set of $e$-th powers in $G$ is a union of stable color classes under $2$-WL$_\verstwo$ which was shown in Lemma~\ref{lem:basicClosure}.
\end{proof}

By the fundamental theorem of abelian groups, finite abelian groups are uniquely determined up to isomorphism by their multisets of element orders. Together with the observation that $2$-WL distinguishes elements based on orders of centralizers (Lemma~\ref{lem:basicClosure}), we can now state the follwoing well-known result in terms of WL-refinement.
\begin{cor}\label{cor:abelian-identified}
	$2$-WL$_{{\verstwo}}$ identifies all finite abelian groups.
\end{cor}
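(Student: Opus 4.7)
The plan is to combine two features of $2$-WL$_{\verstwo}$ that are already available: initial $2$-tuple colors see the ordered isomorphism type of a $2$-generated subgroup (and hence in particular whether its two generators commute), and stable colors on single elements determine orders by Lemma~\ref{lem:order_and_powers}. Together these reduce the claim to the standard fact that finite abelian groups are classified by the multiset of their element orders.

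Concretely, suppose $G$ is a finite abelian group and $G\equiv^{\verstwo}_2 H$. A color-preserving bijection $f:G^{(2)}\to H^{(2)}$ combined with part~2 of Lemma~\ref{DetectColorOfCoords} induces a bijection $\tilde f:G\to H$ with $\chi^{\verstwo,2}_G=\chi^{\verstwo,2}_H\circ \tilde f$; in particular $|G|=|H|$. First I would use this bijection on pairs to show that $H$ is abelian: the initial coloring $\chi^{\verstwo,2}_{0}(x,y)$ records the ordered isomorphism type of $\langle x,y\rangle$ and therefore determines whether $xy=yx$. Since every pair in $G$ commutes, the bijection $f^{-1}$ (which preserves colors and hence initial colors) forces every pair in $H$ to commute, so $H$ is abelian.

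Next I would invoke Lemma~\ref{lem:order_and_powers} to conclude that $\tilde f$ preserves element orders, so that $G$ and $H$ have the same multiset of element orders. By the classical consequence of the fundamental theorem of finite abelian groups that two finite abelian groups with coinciding multisets of element orders are isomorphic, $G\cong H$, establishing that $2$-WL$_{\verstwo}$ identifies $G$ among all finite groups. There is no real obstacle here: the only point that requires care is observing that commutativity of a pair is visible already in the \emph{initial} coloring of $\chi^{\verstwo,2}$, but this is immediate from the definition via ordered isomorphism types.
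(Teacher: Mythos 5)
Your proposal is correct and follows essentially the same route as the paper: the paper also reduces to the classical fact that finite abelian groups are determined by their multisets of element orders, using Lemma~\ref{lem:order_and_powers} for orders and the observation that $2$-WL$_{\verstwo}$ sees commutativity (the paper phrases this via centralizer orders from Lemma~\ref{lem:basicClosure}, while you read it off the initial pair coloring — an immaterial difference). No gaps.
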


\subsection{Derived \& Central Series}
We group the investigated properties in a thematic manner, beginning with central subgroups and commutators. We briefly introduce several fundamental notions from the theory of groups in what follows. A detailed treatment can be found in \cite{hall1999theory} for example. 
\begin{lem}\label{LemCenterDetectable}
	Consider groups~$G,G_1$ and~$G_2$.
	\begin{enumerate}
		\item If $G_1\equiv_2^\verstwo G_2$ then there is a bijection $f:G_1\to G_2$ such that
	for all $g\in G_1$ it holds that $|C_{G_1}(g)|=|C_{G_2}(f(g))|$.
		
		\item $Z(G)$ is~$2$-WL$_{\verstwo}$-detectable.
	\end{enumerate}
\end{lem}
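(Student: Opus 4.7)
The plan is to treat Part 2 first and then use an analogous argument for Part 1. Part 2 is an immediate consequence of Lemma~\ref{lem:basicClosure}(2): taking $\mathcal{S}=\mathcal{T}=Id$ (trivially $2$-WL$_\verstwo$-detectable) the centralizer operator $C_\mathcal{S}(\mathcal{T})$ yields exactly $\{s\in G:[s,G]=\{1\}\}=Z(G)$, and the cited lemma certifies this subset selector to be $2$-WL$_\verstwo$-detectable with no increase in dimension.

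For Part 1 the strategy is to show that the stable element-color $\chi^{\verstwo,2}_G(g)$ already records $|C_G(g)|$. The initial coloring $\chi^{\verstwo,2}_{G,0}(g,x)$ is defined via the ordered isomorphism type of $\langle g,x\rangle$ with the generators marked; in particular it distinguishes commuting from non-commuting pairs, so the set $\{(g,x):[g,x]=1\}$ is a union of initial color classes (alternatively, this follows from Lemma~\ref{LemGroupExpression} applied to the group expression $(Id,Id;\{[x_1,x_2]\})$). The refinement step computing $\chi^{\verstwo,2}_{G,i+1}(g,1)$ aggregates, among other things, the multiset $\{\!\{\chi^{\verstwo,2}_{G,i}(g,x):x\in G\}\!\}$ obtained by varying the second coordinate. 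Counting the entries in this multiset that lie in the commuting color classes recovers $|C_G(g)|$, and since this count is preserved from one refinement round to the next, it is an invariant of the stable color of $(g,1)$. By Lemma~\ref{DetectColorOfCoords}(2) the same information is carried by the induced element-color of $g$.

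To conclude Part 1, I invoke the hypothesis $G_1\equiv_2^\verstwo G_2$ to fix a bijection $G_1^{(2)}\to G_2^{(2)}$ preserving stable $2$-colors. Passing via Lemma~\ref{DetectColorOfCoords}(2) to the induced element-coloring yields a color-preserving bijection $f:G_1\to G_2$ on elements, and combining this with the preceding paragraph gives $|C_{G_1}(g)|=|C_{G_2}(f(g))|$ for every $g\in G_1$. The main subtlety is bookkeeping: since $2$-WL$_\verstwo$ stores colors on pairs, element-level statistics have to be recovered through the induced $1$-coloring, but once this passage is made explicit the proof reduces to the basic observation that commutation is visible already in the initial $2$-coloring through the ordered isomorphism type of $\langle g,x\rangle$, together with the fact that the refinement rule counts along the second coordinate.
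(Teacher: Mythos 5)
Your proof is correct. The paper states this lemma without an explicit proof, but its surrounding remarks indicate exactly the justification you give: Part 2 via Lemma~\ref{lem:basicClosure}(2) applied with $\mathcal{S}=\mathcal{T}=Id$, and Part 1 via the observation that commutation of a pair is visible in the initial coloring of $2$-WL$_\verstwo$ and that the refinement step counts, for each $g$, the elements $x$ with $(g,x)$ in a commuting color class, so the stable element color of $g$ already records $|C_G(g)|$.
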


Every $g\in G$ induces an \textbf{inner} automorphism $\kappa_g$ of $G$ via $\kappa_g(h):=ghg^{-1}$. This defines the \textbf{inner automorphism group} $Inn(G)=\{\kappa_g\mid g\in G\}\cong G/Z(G)$.

\begin{cor}
	For any $k\geq 4$ it holds that $G\equiv^{\versone}_k H\Longrightarrow Inn(G)\equiv^{\versone}_k Inn(H)$.
\end{cor}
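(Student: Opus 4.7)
The plan is to reduce the statement about $Inn(G)$ to one about the quotient $G/Z(G)$ using the standard isomorphism $Inn(G)\cong G/Z(G)$, and then combine detectability of the center with the general quotient machinery from Theorem~\ref{MainThm1}.

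First, I would recall that Lemma~\ref{LemCenterDetectable} shows $Z(G)$ is $2$-WL$_\verstwo$-detectable, and by Lemma~\ref{CompareVersions} (Part~2) any $m$-WL$_\verstwo$-detectable subset selector is also $(m+1)$-WL$_\versone$-detectable. Hence the subset selector $G\mapsto Z(G)$ is $k$-WL$_\versone$-detectable for every $k\geq 3$, in particular for $k\geq 4$ as required here.

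Next, assume $G\equiv^{\versone}_k H$ with $k\geq 4$. By the definition of non-distinguishability, there is a bijection $\Psi:G\to H$ with $(\chi^{\versone,k}_{G})^{(G)}=(\chi^{\versone,k}_{H})^{(H)}\circ \Psi$. Because $Z(G)$ is $\chi^{\versone,k}_{G}$-induced (by detectability of the center) and color classes match up under $\Psi$, the image $\Psi(Z(G))$ must be precisely the union of the corresponding color classes in $H$, which is $Z(H)$. I would then invoke Theorem~\ref{MainThm1} Part~2 with $M:=Z(G)$: since $M$ is a normal subgroup and $\Psi(M)=Z(H)$, the theorem directly yields
\[
(G/Z(G),\bar{\gamma_G})\equiv^{\versone}_k (H/Z(H),\bar{\gamma_H}),
\]
which, using $Inn(G)\cong G/Z(G)$ and $Inn(H)\cong H/Z(H)$, gives $Inn(G)\equiv^{\versone}_k Inn(H)$.

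The whole argument is essentially a direct combination of two earlier results, so there is no genuine obstacle — the only thing to be careful about is the version bookkeeping (Version~$\verstwo$~detectability of $Z(G)$ becomes Version~$\versone$~detectability at the cost of one pebble, which is harmlessly absorbed into the assumption $k\geq 4$, matching the hypothesis of Theorem~\ref{MainThm1}).
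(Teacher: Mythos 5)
Your proof is correct and follows exactly the paper's intended route: the paper's proof is the one-line remark that the corollary "follows from the previous lemma [detectability of $Z(G)$] together with Theorem~\ref{MainThm1}," and you have simply filled in those details, including the correct version/dimension bookkeeping. No issues.
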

\begin{proof}
	The result follows from the previous lemma together with Theorem~\ref{MainThm1}.
\end{proof}

\begin{cor}\label{cor:derived_detectable}
	For $k\geq 3$,~$G'\coloneqq [G,G]$ is $k$-WL$_{{\verstwo}}$-detectable.
\end{cor}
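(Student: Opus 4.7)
The plan is to apply Lemma~\ref{lem:basicClosure}(6) directly to the identity subset selector. Recall that the identity selector $Id$ with $Id(G,\gamma) := G$ is $k$-WL$_\verstwo$-detectable for every $k \geq 2$, since the full group is trivially a union of color classes. Lemma~\ref{lem:basicClosure}(6) then states that, provided $k \geq 3$, the commutator $[\mathcal{S}, \mathcal{T}]$ of two $k$-WL$_\verstwo$-detectable subset selectors is again $k$-WL$_\verstwo$-detectable.

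Setting $\mathcal{S} = \mathcal{T} = Id$, we obtain that
\[
[Id, Id](G,\gamma) = \langle [g,h] \mid g,h \in G \rangle = [G,G] = G'
\]
is $k$-WL$_\verstwo$-detectable for all $k \geq 3$. This is essentially a one-line corollary; there is no real obstacle, as all of the work has been encapsulated in Lemma~\ref{lem:basicClosure}, which in turn rests on the group expression machinery of Lemma~\ref{LemGroupExpression}. No further argument is required.
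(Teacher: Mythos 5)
Your proof is correct and matches the paper's own argument, which likewise invokes Part~6 of Lemma~\ref{lem:basicClosure} with both subset selectors taken to be the whole group. No further comment is needed.
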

\begin{proof}
	This is Part 6 of Lemma~\ref{lem:basicClosure} for $M=N=G$.
\end{proof}

Let us point out that $k\geq 3$ is a necessary requirement in the previous corollary as computations on SmallGroup(128,171) and SmallGroup(128,1122) from the Small Groups Library in GAP (\cite{GAP4}) show. 

The \textbf{derived series} of a group $G$ is defined as follows.
Set $G_{(0)}:=G$ and for all $i>0$ let $G_{(i)}:=\left(G_{(i-1)}\right)'$. This defines a chain of characteristic subgroups $G=G_{(0)}\geq G_{(1)}\geq\dots\geq G_{(t)}=G_{(t+1)}$ for some $t\geq 0$ (we assume $G$ to be finite), the derived series of $G$. Furthermore, let $G_{(\infty)}:=G_{(t)}$ denote the stable term of the derived series.

\begin{lem}\label{lem:derived_series_detectable}
	Let $k\geq 4$ and $G\equiv^{\versone}_k H$, then $G/G_{(\infty)}$ and $H/H_{(\infty)}$ have step-wise isomorphic derived series, i.e., $G_{(i)}/G_{(i+1)}\cong H_{(i)}/H_{(i+1)}$ for all $i$. Additionally $G_{(i)}\equiv^{\versone}_k H_{(i)}$ holds for all $i$.
\end{lem}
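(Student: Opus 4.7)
The plan is an induction on $i$ proving two assertions jointly: (a) the subset selector $(G,\gamma)\mapsto G_{(i)}$ is $k$-WL$_\versone$-detectable for every $k\geq 4$, and (b) under any bijection $\Psi\colon G\to H$ induced by $G\equiv^\versone_k H$, we have $\Psi(G_{(i)})=H_{(i)}$ together with $(G_{(i)},\gamma_G|_{G_{(i)}})\equiv^\versone_k (H_{(i)},\gamma_H|_{H_{(i)}})$. The base case $i=0$ is immediate. For the inductive step, assume $G_{(i)}$ is $k$-WL$_\versone$-detectable; then applying Lemma~\ref{lem:basicClosure} Part~6 with $\mathcal{S}=\mathcal{T}$ equal to this selector (the required threshold $k>3$ for Version~$\versone$ is met) shows that $G_{(i+1)}=[G_{(i)},G_{(i)}]$ is $k$-WL$_\versone$-detectable. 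Theorem~\ref{MainThm1} Part~2 then yields $\Psi(G_{(i+1)})=H_{(i+1)}$, the normality of the image, and $G_{(i+1)}\equiv^\versone_k H_{(i+1)}$, completing the induction and simultaneously establishing the ``additionally'' clause of the lemma.

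For the step-wise isomorphism of factors, note that $G_{(i+1)}$ is normal and $k$-WL$_\versone$-detectable inside $G_{(i)}$ (the same commutator construction applies relative to $(G_{(i)},\gamma_G|_{G_{(i)}})$), so a second application of Theorem~\ref{MainThm1} Part~2, now at the level of $(G_{(i)},\gamma_G|_{G_{(i)}})\equiv^\versone_k (H_{(i)},\gamma_H|_{H_{(i)}})$, yields $G_{(i)}/G_{(i+1)}\equiv^\versone_k H_{(i)}/H_{(i+1)}$. These quotients are abelian by construction of the derived series; combining Lemma~\ref{CompareVersions} (to pass from $\equiv^\versone_k$ with $k\geq 4$ down to $\equiv^\verstwo_2$) with Corollary~\ref{cor:abelian-identified} upgrades the WL-indistinguishability to an honest isomorphism $G_{(i)}/G_{(i+1)}\cong H_{(i)}/H_{(i+1)}$. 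Finally, since $\Psi(G_{(i)})=H_{(i)}$ for every $i$, the two derived series stabilize at the same index, so the third isomorphism theorem identifies these factors with the factors of the derived series of $G/G_{(\infty)}$ and $H/H_{(\infty)}$, respectively.

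The argument is essentially a direct assembly of the closure and detectability machinery built in Sections~\ref{SecTechnicalBasics}--\ref{SecWLvsGroupStructure}, and no new ideas beyond this toolkit are required. The only subtle point, and the reason the hypothesis $k\geq 4$ appears, is the calibration of constants: Version~$\versone$ of Lemma~\ref{lem:basicClosure} Part~6 demands $k>3$ for the commutator computation iterated along the series, and the reduction to the abelian case in turn demands enough dimension to recover $\equiv^\verstwo_2$ from $\equiv^\versone_k$ through Lemma~\ref{CompareVersions}; both requirements are met precisely when $k\geq 4$.
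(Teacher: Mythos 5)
Your proof is correct and follows essentially the same route as the paper: detect the commutator subgroup via Lemma~\ref{lem:basicClosure} Part~6, apply Theorem~\ref{MainThm1} to pass to $G_{(i+1)}\equiv^\versone_k H_{(i+1)}$ and to the abelian quotient, upgrade to isomorphism via Corollary~\ref{cor:abelian-identified}, and induct. The only difference is presentational — you carry detectability of $G_{(i)}$ as an explicit inductive invariant, whereas the paper simply restarts the one-step argument on the indistinguishable pair $(G_{(i)},H_{(i)})$ — but the substance is identical.
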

\begin{proof}
	We show the following: If $G$ and $H$ are not distinguished then $G/G'\cong H/H'$ and $G'$ is not distinguished from $H'$. Then the claim follows by induction. By the previous corollary, the commutator subgroup is detectable by Version $1$ WL for $k\geq 4$. Theorem~\ref{MainThm1} together with $G\equiv^{\versone}_k H$ implies $G'\equiv^{\versone}_k H'$ and $G/G'\equiv^{\versone}_k H/H'$. The latter is actually equivalent to
$G/G'\cong H/H'$ by Corollary~\ref{cor:abelian-identified}, since $G/G'$ and $H/H'$ are abelian.
\end{proof}

\begin{cor}\label{cor:solvability_detectable}
	For $k\geq 4$, $k$-WL$_{{\versone}}$ distinguishes solvable from non-solvable groups.
\end{cor}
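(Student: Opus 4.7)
The plan is to read this off directly from Lemma~\ref{lem:derived_series_detectable}. Recall that a finite group $G$ is solvable if and only if its derived series reaches the trivial subgroup in finitely many steps, i.e., $G_{(\infty)} = \{1\}$. So the task reduces to showing that $G\equiv^{\versone}_k H$ implies $G_{(\infty)}=\{1\}$ if and only if $H_{(\infty)}=\{1\}$.

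First, I would invoke Lemma~\ref{lem:derived_series_detectable} to obtain $G_{(i)}\equiv^{\versone}_k H_{(i)}$ for every~$i$. From the definition of $k$-WL$_{\versone}$-indistinguishability, any two groups that are not distinguished have the same cardinality (the stable color class multiset counts each element of the group). Hence $|G_{(i)}|=|H_{(i)}|$ for all $i$, and in particular $G_{(\infty)}$ is trivial precisely when $H_{(\infty)}$ is. This yields $G$ solvable $\Longleftrightarrow$ $H$ solvable, so $k$-WL$_{\versone}$ distinguishes a solvable group from any non-solvable group.

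There is essentially no obstacle here: all the technical work (that commutator subgroups are detectable and that both $G'$ and $G/G'$ are correctly tracked by $k$-WL$_{\versone}$ for $k\geq 4$) has already been carried out in Corollary~\ref{cor:derived_detectable} and Lemma~\ref{lem:derived_series_detectable}. The corollary is really just the observation that solvability is a property of the derived series, which those earlier results already show is preserved under $k$-WL$_{\versone}$-equivalence.
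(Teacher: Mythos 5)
Your argument is correct and matches the paper's intent: the corollary is stated without proof as an immediate consequence of Lemma~\ref{lem:derived_series_detectable}, exactly as you read it off, using that $G_{(i)}\equiv^{\versone}_k H_{(i)}$ forces $|G_{(i)}|=|H_{(i)}|$ and hence that the stable terms are simultaneously trivial or nontrivial.
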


Similar arguments work for the lower and upper central series. Let $Z_0:=G$ and $Z_{i+1}:=[Z_i,G]$. Then $Z'_i\leq [Z_i,G]=Z_{i+1}$ and thus $Z_i/Z_{i+1}$ is again abelian. Define $Z_\infty$ as the stable term of this series, the \textbf{lower central series} of $G$. Then $G$ is nilpotent if and only if $Z_\infty=\{1\}$ and in this case the nilpotency class of $G$ is the minimal $c$ such that $Z_\infty(G)=Z_{c}(G)$.

\begin{lem}
	Let $k\geq 4$ and $G\equiv^{\versone}_k H$, then 
	$Z_i(G)/Z_{i+1}(G)\cong Z_i(H)/Z_{i+1}(H)$ for all $i$. Additionally $Z_{i}(G)\equiv^{\versone}_k Z_{i}(H)$ and $G/Z_{i}(G)\equiv^{\versone}_k H/Z_{i}(H)$ hold for all $i$.
	In particular, $G$ and $H$ have the same nilpotency class (including the possibility that both $G$ and $H$ are not nilpotent).
\end{lem}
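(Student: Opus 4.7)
The plan is to imitate the proof of Lemma~\ref{lem:derived_series_detectable}, replacing the derived operator by the lower central commutator $[\cdot,G]$. First I would show by induction on $i$ that the subset selector $Z_i$ is $k$-WL$_\versone$-detectable for every $k\geq 4$. The base case $Z_0=Id$ is trivially detectable. For the inductive step, given detectability of $Z_i$, the selector $Z_{i+1}=[Z_i,Id]$ is detectable by Part~6 of Lemma~\ref{lem:basicClosure}, whose Version~$\versone$ hypothesis $k>3$ is satisfied.

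With detectability in hand, applying Theorem~\ref{MainThm1} Part~2 to $G\equiv^\versone_k H$ with the $\chi^{\versone,k}_{\gamma_G}$-induced normal subgroup $Z_i(G)$ yields both $Z_i(G)\equiv^\versone_k Z_i(H)$ and $G/Z_i(G)\equiv^\versone_k H/Z_i(H)$, which handles two of the three claimed equivalences.

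To obtain isomorphism of the factors $Z_i/Z_{i+1}$, I would apply Theorem~\ref{MainThm1} Part~2 once more with $M=Z_{i+1}(G)$ to pass to the colored quotients $G/Z_{i+1}(G)\equiv^\versone_k H/Z_{i+1}(H)$. Since $Z_i$ is detectable as a subset selector, applied to the quotient it identifies $Z_i(G/Z_{i+1}(G))=Z_i(G)/Z_{i+1}(G)$, a normal detectable subgroup inside the quotient. A second invocation of Theorem~\ref{MainThm1} Part~2 in this setting then gives $Z_i(G)/Z_{i+1}(G)\equiv^\versone_k Z_i(H)/Z_{i+1}(H)$. As these factors are abelian by the observation $Z_i'\leq Z_{i+1}$ noted before the statement, Corollary~\ref{cor:abelian-identified} upgrades the equivalence to an actual isomorphism.

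The nilpotency-class statement follows at once: $G$ is nilpotent precisely when some $Z_c(G)$ is trivial, and since $Z_c(G)\equiv^\versone_k Z_c(H)$ forces $|Z_c(G)|=|Z_c(H)|$, the two series reach $\{1\}$ at exactly the same index, or neither does. The main bookkeeping point is ensuring that detectability of $Z_i$ and $Z_{i+1}$ survives repeated passages to subgroups and quotients, but this is precisely the content of Theorem~\ref{MainThm1} Part~2; once that is trusted, the whole argument reduces to the same template that served for the derived series.
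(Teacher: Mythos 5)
Your argument is correct and is essentially the proof the paper has in mind: the paper omits the proof as ``analogous to the derived series,'' and your write-up supplies exactly that analogy, with the one necessary adaptation (using Part~6 of Lemma~\ref{lem:basicClosure} with the two selectors $Z_i$ and $Id$ to detect $Z_{i+1}=[Z_i,G]$, since unlike the derived series this term depends on the ambient group) before invoking Theorem~\ref{MainThm1} and Corollary~\ref{cor:abelian-identified} as in Lemma~\ref{lem:derived_series_detectable}. Whether one reaches the abelian factor $Z_i/Z_{i+1}$ by first passing to $G/Z_{i+1}$ as you do, or by restricting to $Z_i$ and quotienting by the detectable normal subgroup $Z_{i+1}$, is an immaterial difference.
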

We omit the proof since it is analogous to the proof for the derived series.
The \textbf{upper central series} is defined implicitly via $Z^0:=\{1\},\ Z^1:=Z(G)$ and $Z^{i+1}/Z^i:=Z(G/Z^i)$. 
\begin{lem}\label{lem:upper_central_detectable}
	Let $k\geq 4$ and $G\equiv^{\versone}_k H$, then 
	$Z^{i+1}(G)/Z^{i}(G)\cong Z^{i+1}(H)/Z^{i}(H)$ for all $i$. Additionally $Z^{i}(G)\equiv^{\versone}_k Z^{i}(H)$ and $G/Z^{i}(G)\equiv^{\versone}_k H/Z^{i}(H)$ hold for all $i$.
\end{lem}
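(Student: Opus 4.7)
The plan is to proceed by induction on $i$, in close analogy to the proof sketched for the derived and lower central series, but with the additional care needed because the upper central series is defined \emph{via quotients} rather than directly inside~$G$. The key technical tools will be Lemma~\ref{LemCenterDetectable} (which makes the center a $2$-WL$_\verstwo$- and hence $3$-WL$_\versone$-detectable subset selector), Corollary~\ref{cor:abelian-identified} (abelian groups are identified by $2$-WL$_\verstwo$), and both parts of Theorem~\ref{MainThm1}, which together allow us to transport detectability and $k$-WL-equivalence back and forth between subgroups and quotients.

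As inductive hypothesis I would carry along the conjunction of the following three statements for a fixed $i$: (i) the association $G\mapsto Z^i(G)$ is a $k$-WL$_\versone$-detectable subset selector; (ii) $Z^i(G)\equiv^\versone_k Z^i(H)$; and (iii) $G/Z^i(G)\equiv^\versone_k H/Z^i(H)$. The base case $i=0$ is trivial since $Z^0=\{1\}$ is obviously detectable and $G/Z^0=G$, $H/Z^0=H$. For the inductive step I would first view $Z^{i+1}$ as the preimage of the center under the quotient map: regarded on the quotients $(G/Z^i(G),\bar\gamma)$, the subset selector $U/N\coloneqq Z$ is $k$-WL$_\versone$-detectable by Lemma~\ref{LemCenterDetectable} and Lemma~\ref{CompareVersions}. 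Since by hypothesis~$Z^i$ is detectable as well, Theorem~\ref{MainThm1}(1) applied with $N=Z^i$ yields that $Z^{i+1}$ is $k$-WL$_\versone$-detectable, establishing (i) for $i+1$.

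Next, since $G\equiv^\versone_k H$ and $Z^{i+1}$ is a normal, detectable subset selector, Theorem~\ref{MainThm1}(2) directly provides both $Z^{i+1}(G)\equiv^\versone_k Z^{i+1}(H)$ and $G/Z^{i+1}(G)\equiv^\versone_k H/Z^{i+1}(H)$, giving (ii) and (iii) for $i+1$. For the isomorphism of successive factors I would instead apply Theorem~\ref{MainThm1}(2) to the $k$-WL$_\versone$-equivalent pair $G/Z^i(G)$ and $H/Z^i(H)$, together with the fact that the center is a detectable normal subgroup. This yields
\[
	Z^{i+1}(G)/Z^i(G)\;=\;Z\bigl(G/Z^i(G)\bigr)\;\equiv^\versone_k\;Z\bigl(H/Z^i(H)\bigr)\;=\;Z^{i+1}(H)/Z^i(H),
\]
and since both factor groups are abelian, Corollary~\ref{cor:abelian-identified} upgrades this to an actual isomorphism.

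The only real obstacle is a bookkeeping one: ensuring, at every step of the induction, that we simultaneously maintain detectability of the subset selector~$Z^i$ and $k$-WL-equivalence both of the subgroups~$Z^i(G),Z^i(H)$ and of the quotients~$G/Z^i(G),H/Z^i(H)$, because losing either of the last two pieces would break the next inductive step. Everything else is essentially a translation of the derived-series argument, with the center playing the role that the commutator subgroup played there and with the crucial extra ingredient of Theorem~\ref{MainThm1}(2) allowing us to lift properties detected in the quotient back into~$G$ and~$H$.
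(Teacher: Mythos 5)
Your proof is correct, and its overall skeleton --- induction on $i$ carrying along detectability of $Z^i$ together with the two $k$-WL$_\versone$-equivalences, then Theorem~\ref{MainThm1}(2) for the equivalences of $Z^{i+1}$ and of the quotients, and Corollary~\ref{cor:abelian-identified} to upgrade the equivalence of the abelian factors $Z^{i+1}/Z^i$ to an isomorphism --- matches the paper's. The one place where you take a genuinely different route is the inductive step establishing detectability of $Z^{i+1}$. You work top-down from the definition: $Z^{i+1}(G)$ is the preimage of $Z(G/Z^i(G))$, so you invoke Theorem~\ref{MainThm1}(1) with $N=Z^i$ and $U/N=Z$, which under the hood runs the quotient-lifting pebble argument of Lemma~\ref{QuotientColoringLem}. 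The paper instead stays inside $G$ at this step by using the internal characterization $x\in Z^{i+1}(G)\iff\forall y\in G\colon [x,y]\in Z^i(G)$: it forms the group expression $(G,G,G\setminus Z^i(G);\{[x_1,x_2]x_3^{-1}\})$ and observes that $Z^{i+1}(G)$ is the complement of $Sol^\exists_1$, so detectability follows from the lighter Lemma~\ref{LemGroupExpression} (with $t=3$ selectors, hence $k>3$, i.e.\ $k\geq 4$, for Version~$\versone$). Both routes give the same bound $k\geq 4$; yours is the more conceptual ``definition-chasing'' argument, while the paper's is more elementary in that it needs only the equation-system machinery rather than the full quotient correspondence. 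The only bookkeeping point worth making explicit is the version conversion you already gesture at: Lemma~\ref{LemCenterDetectable} gives detectability of the center for $2$-WL$_\verstwo$, and Lemma~\ref{CompareVersions} transfers this to $3$-WL$_\versone$ and hence to $k$-WL$_\versone$ for $k\geq 4$, as required by Theorem~\ref{MainThm1}(1).
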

\begin{proof}
	By definition, $x\in G$ is in $Z^{i+1}(G)$ if and only if for all $y\in G$ it holds $[x,y]\in Z^i(G)$. 
	If $Z^i(G)$ is detected by $k$-WL$_\versone$ and $k\geq 4$ then $Z^{i+1}(G)$ is detected as well. To see this, consider the group expression $(G,G,G\setminus Z^i(G);\mathcal{R}:=\{[x_1,x_2]=x_3\})$. Then $Z^{i+1}(G)$
	is the complement of $T^\exists_1$ in $G$, where $T^\exists_1$ is defined as in Lemma~\ref{LemGroupExpression} and in particular this set is detectable. The indistinguishablility of $G$ and $H$ now inductively implies $Z^{i}(G)\equiv^{\versone}_k Z^{i}(H)$ for all $i$. Theorem~\ref{MainThm1} gives us $Z^{i+1}(G)/Z^{i}(G)\equiv^{\versone}_k Z^{i+1}(H)/Z^{i}(H)$ for all $i$ which can be replaced by isomorphism since these quotients are abelian by definition.
\end{proof}

\subsection{Radicals}
Let $\mathcal{F}$ be a class of finite groups that is closed under isomorphism and normal products (i.e., if $N_1,N_2\trianglelefteq G$ belong to $\mathcal{F}$ then so does $N_1N_2\leq G$). Furthermore let $G$ be an arbitrary finite group. Then the \textbf{$\mathcal{F}$-radical} $\mathcal{O}_\mathcal{F}(G)$ of $G$ is defined as the subgroup generated by normal subgroups of $G$ belonging to $\mathcal{F}$, i.e., the largest normal $\mathcal{F}$-subgroup in $G$. We consider the following explicit standard examples: 
\begin{enumerate}
	\item the abelian radical $\mathcal{A}(G)$, where $\mathcal{F}$ is the class of abelian groups
	\item the $\pi$-radical $\mathcal{O}_\pi(G)$, where $\pi$ is a collection of primes and
	$\mathcal{F}$ is the class of $\pi$-groups (groups whose order is divisible by primes in $\pi$ only)
	\item the nilpotent radical, also known as the \textbf{Fitting subgroup}, denoted $Fit(G)$, where
	where $\mathcal{F}$ is the class of nilpotent groups
	\item the solvable radical $\mathcal{R}(G)$, where $\mathcal{F}$ is the class of solvable groups
\end{enumerate} 
 
We work in the general setting first and later come back to the examples from above.
\begin{lem}
	Assume $\mathcal{F}$ is closed under normal subgroups. Then $\mathcal{O}_\mathcal{F}(G)$ is the set
	of all elements whose normal closure belongs to $\mathcal{F}$.
\end{lem}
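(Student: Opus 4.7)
The plan is to verify the stated equality by a straightforward double-inclusion, using the hypotheses in the standard way. Let $S(G) := \{g \in G \mid \langle g^G \rangle \in \mathcal{F}\}$ denote the set whose description the lemma asserts to coincide with $\mathcal{O}_\mathcal{F}(G)$.

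First I would observe that $\mathcal{O}_\mathcal{F}(G)$ itself belongs to $\mathcal{F}$. By definition it is generated by the normal $\mathcal{F}$-subgroups $N_1, \dots, N_r$ of $G$ (finitely many since $G$ is finite), and since each $N_i$ is normal in $G$, the product $N_1 \cdots N_r$ is a normal subgroup of $G$ equal to $\mathcal{O}_\mathcal{F}(G)$. Iteratively applying closure of $\mathcal{F}$ under normal products to pairs $(N_1 \cdots N_i, N_{i+1})$ then yields $\mathcal{O}_\mathcal{F}(G) \in \mathcal{F}$.

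For the inclusion $\mathcal{O}_\mathcal{F}(G) \subseteq S(G)$, consider any $g \in \mathcal{O}_\mathcal{F}(G)$. Since $\mathcal{O}_\mathcal{F}(G)$ is normal in $G$ and contains $g$, it contains every $G$-conjugate of $g$, so $\langle g^G \rangle \leq \mathcal{O}_\mathcal{F}(G)$. Moreover $\langle g^G \rangle$ is itself normal in $G$, hence normal in $\mathcal{O}_\mathcal{F}(G)$, and by the new hypothesis that $\mathcal{F}$ is closed under taking normal subgroups (applied inside $\mathcal{O}_\mathcal{F}(G) \in \mathcal{F}$) we conclude $\langle g^G \rangle \in \mathcal{F}$, i.e.\ $g \in S(G)$. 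Conversely, for $g \in S(G)$ the subgroup $\langle g^G \rangle$ is normal in $G$ and lies in $\mathcal{F}$, so by maximality $\langle g^G \rangle \leq \mathcal{O}_\mathcal{F}(G)$ and in particular $g \in \mathcal{O}_\mathcal{F}(G)$.

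There is no real obstacle here; the only point requiring a moment's care is bookkeeping which hypothesis is invoked where. Closure under normal products is used exactly once, to ensure $\mathcal{O}_\mathcal{F}(G) \in \mathcal{F}$; closure under normal subgroups is used exactly once, to pass from $\langle g^G \rangle \trianglelefteq \mathcal{O}_\mathcal{F}(G)$ to $\langle g^G \rangle \in \mathcal{F}$; and closure under isomorphism is implicit in speaking about membership in $\mathcal{F}$ at all.
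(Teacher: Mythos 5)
Your proof is correct and follows essentially the same double-inclusion argument as the paper: the normal closure of an element of the radical is a normal subgroup of the radical (an $\mathcal{F}$-group), hence lies in $\mathcal{F}$ by the closure hypothesis, while any element with normal closure in $\mathcal{F}$ lies in the radical by maximality. Your explicit verification that $\mathcal{O}_\mathcal{F}(G)\in\mathcal{F}$ via iterated normal products is a detail the paper leaves implicit, but the route is the same.
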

\begin{proof}
	By definition $\mathcal{O}_\mathcal{F}(G)$ is the unique largest normal $\mathcal{F}$-subgroup in $G$.
	Thus, if $x\in \mathcal{O}_\mathcal{F}(G)$ then the normal closure of $x$ is a normal subgroup contained in the radical
	and hence belongs to $\mathcal{F}$ since $\mathcal{F}$ is assumed to be closed under normal subgroups. On the other hand it is always the case that elements with 
	normal closure in $\mathcal{F}$ contribute to $\mathcal{O}_\mathcal{F}(G)$, since
	this is the unique largest normal $\mathcal{F}$-subgroup in $G$. 
\end{proof}

\begin{lem}
	Let $k\geq 3$. If $\mathcal{F}$ is closed under normal subgroups and $(k-1)$-WL$_{{\verstwo}}$ distinguishes
	$\mathcal{F}$-groups from all other groups, then $k$-WL$_{{\verstwo}}$ detects $\mathcal{O}_\mathcal{F}(G)$ in $G$.
\end{lem}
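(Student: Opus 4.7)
The plan is to combine the preceding lemma with Corollary~\ref{CorNormalClosuresWL} in a straightforward contradiction argument. The preceding lemma characterises $\mathcal{O}_\mathcal{F}(G)$ as the set $\{x \in G \mid \langle x^G \rangle \in \mathcal{F}\}$, so the task of detecting the radical as a subset reduces entirely to detecting this property of individual group elements.

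First I would fix colored groups $(G,\gamma_G)$ and $(H,\gamma_H)$ together with elements $x \in \mathcal{O}_\mathcal{F}(G)$ and $y \in H \setminus \mathcal{O}_\mathcal{F}(H)$, and suppose for contradiction that $\chi^{\verstwo,k}_{\gamma_G}(x) = \chi^{\verstwo,k}_{\gamma_H}(y)$. Since $k \geq 3$, Corollary~\ref{CorNormalClosuresWL} directly yields the colored equivalence
\[
(\langle x^G \rangle, \gamma_G|_{\langle x^G \rangle}) \equiv^{\verstwo}_{k-1} (\langle y^H \rangle, \gamma_H|_{\langle y^H \rangle}).
\]
By the preceding lemma, $\langle x^G \rangle \in \mathcal{F}$. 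If $\langle y^H \rangle \in \mathcal{F}$ as well, then $y$ lies in a normal $\mathcal{F}$-subgroup of $H$ and hence in $\mathcal{O}_\mathcal{F}(H)$, contradicting the choice of $y$. Thus it suffices to derive $\langle y^H \rangle \in \mathcal{F}$ from the displayed equivalence using the hypothesis.

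The one subtlety I would address is that the hypothesis refers to $(k-1)$-WL$_\verstwo$ on plain (uncolored) groups, whereas the equivalence above involves inherited colorings. To bridge the gap, I would note that passing from a nontrivial coloring to the trivial coloring can only coarsen the initial coloring of $(k-1)$-WL$_\verstwo$ and thus also coarsens the stable coloring; equivalently, in the pebble game Spoiler's winning condition becomes strictly harder without colors, so a Duplicator strategy in the colored game remains a Duplicator strategy in the uncolored game. In particular, the colored $(k-1)$-WL$_\verstwo$ equivalence above implies the uncolored equivalence $\langle x^G \rangle \equiv^{\verstwo}_{k-1} \langle y^H \rangle$. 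Applying the hypothesis to this pair yields $\langle y^H \rangle \in \mathcal{F}$, completing the contradiction.

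The main (minor) obstacle is precisely this colored-to-uncolored step; the rest is a clean composition of Corollary~\ref{CorNormalClosuresWL} with the characterisation lemma for radicals. Everything else is formal, and no further tools beyond what has already been developed in Sections~\ref{SecTechnicalBasics} and~\ref{SecWLvsGroupStructure} are required.
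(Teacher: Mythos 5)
Your proposal is correct and follows essentially the same route as the paper: characterize $\mathcal{O}_\mathcal{F}$ via normal closures using the preceding lemma, then apply Corollary~\ref{CorNormalClosuresWL} together with the hypothesis on $(k-1)$-WL$_\verstwo$. The colored-to-uncolored reduction you flag is a legitimate detail that the paper glosses over, and your resolution of it (trivializing the coloring only coarsens the stable coloring) is the right one.
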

\begin{proof}
	In Corollary~\ref{CorNormalClosuresWL} we proved that $k$-WL$_\verstwo$ can distinguish elements based on $(k-1)$-dimensional properties of their normal closures. Then by the assumptions above and the previous lemma, $k$-WL$_{\verstwo}$ detects the set of group elements whose normal closure belongs to $\mathcal{F}$ which is precisely the radical in this case.
\end{proof}
Also note that in all examples we consider above, $\mathcal{F}$ is indeed closed under normal subgroups.

In the previous section we showed that $2$-WL$_\verstwo$ identifies all abelian groups
and distinguishes $\pi$-groups from other groups for fixed $\pi$.
\begin{cor}
	$3$-WL$_{{\verstwo}}$ detects $\mathcal{A}(G)$ as well as $\mathcal{O}_\pi(G)$, where $\pi$ is an arbitrary collection of primes.
\end{cor}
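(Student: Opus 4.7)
The plan is to simply verify the hypotheses of the preceding lemma for the two families $\mathcal{F}$ of interest, namely the class of abelian groups and the class of $\pi$-groups, and then invoke that lemma with $k=3$. So I first need to check that both classes satisfy the standing assumptions on $\mathcal{F}$ (closed under isomorphism and under normal products, so that the radical $\mathcal{O}_\mathcal{F}(G)$ is well-defined), and additionally are closed under taking normal subgroups (so that the lemma applies). Both conditions are immediate in each case: subgroups of abelian groups are abelian, subgroups of $\pi$-groups are $\pi$-groups by Lagrange, and in both classes the product of two normal $\mathcal{F}$-subgroups is again in $\mathcal{F}$.

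The remaining hypothesis is that $2$-WL$_\verstwo$ distinguishes $\mathcal{F}$-groups from all other groups. For $\mathcal{F}=\{\text{abelian groups}\}$, this is exactly what Corollary~\ref{cor:abelian-identified} gives us: $2$-WL$_\verstwo$ identifies every abelian group, so in particular distinguishes it from any non-abelian one. For $\mathcal{F}=\{\pi\text{-groups}\}$, observe that two groups of different orders are trivially distinguished, so we may assume $|G|=|H|$. By Lemma~\ref{lem:order_and_powers}, the stable $2$-WL$_\verstwo$-coloring preserves element orders, hence the multiset of element orders is a $2$-WL$_\verstwo$-invariant. A group is a $\pi$-group precisely when every element order has only prime factors in $\pi$ (using Cauchy's theorem for the nontrivial direction), so if $G$ is a $\pi$-group and $H$ is not, there is some element of $H$ whose order has a prime factor outside $\pi$ while no such element exists in $G$, and this discrepancy in the multiset of orders is detected by $2$-WL$_\verstwo$.

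With both hypotheses verified, applying the previous lemma with $k=3$ yields that $3$-WL$_\verstwo$ detects $\mathcal{A}(G)=\mathcal{O}_{\text{Ab}}(G)$ and $\mathcal{O}_\pi(G)$. There is no real obstacle here; the only tiny subtlety is making the distinguishability statement for $\pi$-groups explicit, since it was never stated as a separate corollary in the way Corollary~\ref{cor:abelian-identified} was, but it follows directly from Lemma~\ref{lem:order_and_powers} as sketched.
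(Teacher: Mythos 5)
Your proof is correct and follows exactly the route the paper intends: the corollary is stated immediately after the general radical lemma precisely so that one only needs to check its hypotheses for the abelian and $\pi$-group classes, which you do (via Corollary~\ref{cor:abelian-identified} for the abelian case and via the order-invariance of Lemma~\ref{lem:order_and_powers} for the $\pi$ case). The only remark worth adding is that the $\pi$-group case is even more immediate than your element-order argument suggests, since being a $\pi$-group depends only on $|G|$, and groups of different orders are trivially distinguished.
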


Regarding the Fitting subgroup, recall that finite nilpotent groups are direct products of $p$-groups (see e.g.\,\cite{zassenhaus1999theory}) and thus the nilpotent radical of a finite group, i.e., $Fit(G)$, is the largest normal subgroup that is a direct product of $p$-groups. By definition this means 
\[
	Fit(G)=\underset{p\ \mid\ |G|}{\bigtimes} \mathcal{O}_p(G).
\]
The detectablility of the Fitting subgroup therefore follows from the discussion of $\pi$-radicals for $\pi:=\{p\}$ and Corollary~\ref{cor:direct_product_of_detectables}.
\begin{cor}
	$Fit(G)$ is detectable by $3$-WL$_{{\verstwo}}$.
\end{cor}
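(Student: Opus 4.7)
The plan is to exploit the identity
\[
    Fit(G) = \bigl\langle \mathcal{O}_p(G) \mid p \text{ prime dividing } |G| \bigr\rangle,
\]
which is just a restatement of the displayed formula $Fit(G)=\bigtimes_{p\mid |G|}\mathcal{O}_p(G)$ as an internal direct product: since the factors $\mathcal{O}_p(G)$ for distinct primes $p$ pairwise commute and have coprime orders, the subgroup they generate coincides with their internal direct product.

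First, I would invoke the previous corollary to get that for each prime $p$, the subset selector $G\mapsto \mathcal{O}_p(G)$ (obtained by taking $\pi=\{p\}$) is $3$-WL$_{\verstwo}$-detectable. Next, I would form the union selector $\mathcal{U}(G):=\bigcup_{p} \mathcal{O}_p(G)$ ranging over all primes. A priori this is a union over infinitely many selectors, but for any fixed group $G$ one has $\mathcal{O}_p(G)=\{1\}$ whenever $p\nmid |G|$, so the union is effectively finite; moreover since detectability is a condition that compares pairs of colored groups having the same $k$-WL color classes (and hence in particular the same order, so the same prime divisors), it reduces to the statement that a finite union of detectable selectors is detectable, which was noted immediately after the definition of detectability.

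Finally, I would apply Lemma~\ref{lem:basicClosure}(3), which asserts that for $k\geq 3$, the selector $G\mapsto \langle \mathcal{U}(G)\rangle$ generated by a detectable selector $\mathcal{U}$ is again detectable by $k$-WL$_{\verstwo}$. Since $\langle \mathcal{U}(G)\rangle = Fit(G)$ by the identity above, this yields that $Fit(G)$ is $3$-WL$_{\verstwo}$-detectable, as desired.

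There is no real obstacle here; the only minor subtlety is the justification that a union indexed by infinitely many primes still defines a detectable selector, which is resolved by the observation above that only finitely many primes ever contribute for any concrete comparison of groups. The proof is essentially a direct assembly of the earlier corollary on $\pi$-radicals, the closure of detectability under unions, and the closure of detectability under taking generated subgroups (Lemma~\ref{lem:basicClosure}(3)).
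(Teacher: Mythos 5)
Your proof is correct and follows essentially the same route as the paper: both reduce to the $3$-WL$_{\verstwo}$-detectability of the individual $p$-radicals and then assemble $Fit(G)=\bigtimes_{p\mid |G|}\mathcal{O}_p(G)$ via closure properties of detectable selectors. The only cosmetic difference is that you invoke Lemma~\ref{lem:basicClosure}(3) (generated subgroups) together with closure under unions, whereas the paper cites Corollary~\ref{cor:direct_product_of_detectables}; your handling of the formally infinite union over primes is a valid resolution of a non-issue.
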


There is another well-known characterization of the $\pi$-radical as the intersection of all maximal $\pi$-subgroups of $G$. Using WL-refinement, it is even possible to consider arbitrary intersections of maximal $\pi$-subgroups.

\begin{lem}
	Let $g\in G$ and $h\in H$ be two $\pi$-elements and let $S_g\leq G$ and $S_h\leq H$ be the intersections of all maximal $\pi$-subgroups containing $g$ and $h$, respectively. If $\chi^{\verstwo,k}_G(g)=\chi^{\verstwo,k}_H(h)$ then $S_g\equiv^\verstwo_{k-1} S_h$.
\end{lem}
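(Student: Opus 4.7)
The plan is to single out $g$ and $h$ by a common fresh color, translate the hypothesis $\chi^{\verstwo,k}_G(g)=\chi^{\verstwo,k}_H(h)$ into a pebble-game statement on the resulting colored groups, argue that the subset selector assigning the intersection of all maximal $\pi$-subgroups containing the distinguished element is $(k-1)$-WL$_\verstwo$-detectable, and then conclude via the Version-\verstwo\ analogue of Theorem~\ref{MainThm1}(2). Concretely, let $(G,\gamma_g)$ and $(H,\gamma_h)$ give $g$ respectively $h$ a unique shared color. Via Lemma~\ref{AlgoVSGame} and the standard translation between pinning a pebble pair and inserting a unique color, the hypothesis is equivalent to $(G,\gamma_g)\equiv^\verstwo_{k-1}(H,\gamma_h)$.

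Next I would show that the subset selector $\mathcal{S}$ sending a colored group with distinguished $\pi$-element $g$ to $S_g$ is $(k-1)$-WL$_\verstwo$-detectable (for $k-1\geq 3$, which covers the interesting cases). The key characterization is: $x\in S_g$ iff for every $y\in G$ with $\langle g,y\rangle$ a $\pi$-group, the group $\langle g,y,x\rangle$ is also a $\pi$-group. The forward direction is immediate, since any $\pi$-subgroup $\langle g,y\rangle$ extends to some maximal $\pi$-subgroup $M\ni g$, and $x\in M$ by definition. The reverse direction requires showing that if $x\notin S_g$ then some single $y\in G$ already witnesses $\langle g,y,x\rangle$ not $\pi$: pick a maximal $\pi$-subgroup $M\ni g$ with $x\notin M$, note by maximality that $\langle M,x\rangle$ contains an element of non-$\pi$-order, and extract such a witness $y\in M$ by a coset/cyclic-reduction argument on words in $M\cup\{x\}$. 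Combining this characterization with the initial-coloring detectability of $T_g=\{y:\langle g,y\rangle\text{ is a }\pi\text{-group}\}$ (via the Version-\verstwo\ initial $2$-coloring and Lemma~\ref{lem:order_and_powers}) and the $\forall$-projection construction of Lemma~\ref{LemGroupExpression}(2) (where the inner predicate "$\langle g,y,x\rangle$ is a $\pi$-group" is decoded from the initial Version-\verstwo\ coloring of the $3$-tuple $(g,y,x)$) yields the $(k-1)$-WL$_\verstwo$-detectability of $\mathcal{S}$.

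With detectability in hand, the Version-\verstwo\ analogue of Theorem~\ref{MainThm1}(2) applies to $(G,\gamma_g)\equiv^\verstwo_{k-1}(H,\gamma_h)$: any color-preserving bijection $\Psi:G\to H$ induced by the stable colorings satisfies $\Psi(S_g)=S_h$ and restricts to a bijection certifying $(S_g,\gamma_g|_{S_g})\equiv^\verstwo_{k-1}(S_h,\gamma_h|_{S_h})$. Stripping the coloring (which only marks $g$ inside $S_g$ and $h$ inside $S_h$) can only coarsen the stable colorings and thus preserves $\equiv^\verstwo_{k-1}$, yielding $S_g\equiv^\verstwo_{k-1}S_h$ as plain groups.

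The main obstacle is the reverse direction of the characterization of $S_g$: since the subgroup generated by a collection of $\pi$-subgroups need not itself be a $\pi$-subgroup (as witnessed already by $S_3$), one cannot naively string together pairwise data. The rescue is that the obstruction to $\langle M,x\rangle$ being a $\pi$-subgroup must propagate to some $(g,y,x)$-triple: if $w\in\langle M,x\rangle$ has order a prime $p\notin\pi$, then writing $w=m_1 x^{a_1}m_2 x^{a_2}\cdots$ with $m_i\in M$ and applying a coset reduction yields $y\in M$ for which $\langle g,y,x\rangle$ already contains a non-$\pi$-order element. Should this reduction turn out subtler in non-$\pi$-separable groups than anticipated, the fallback is to enlarge the witness to a tuple of fixed length $m\leq k-2$, which is still absorbed by Lemma~\ref{LemGroupExpression}(2) without exceeding the $(k-1)$-WL$_\verstwo$ dimension budget.
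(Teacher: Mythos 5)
Your overall architecture matches the paper's: fix the pair $(g,h)$ (the paper does this by leaving a pebble pair on $(g,h)$ rather than by introducing a fresh color, which is equivalent), reduce membership in $S_g$ to conditions on isomorphism types of short tuples through $g$, and then transfer $S_g$ to $S_h$ by restricting the game to these sets. The paper's characterization is slightly different from yours: it sets $M_g:=\{y\in G\mid\langle g,y\rangle\text{ is a }\pi\text{-group}\}$, observes that $M_g$ is the union of the maximal $\pi$-subgroups containing $g$, and then uses $S_g=\{x\mid M_g\subseteq M_x\}$, i.e.\ only $2$-generated subgroups, whereas you use the triple condition on $\langle g,y,x\rangle$. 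Since $\langle x,y\rangle\le\langle g,y,x\rangle$, the reverse inclusion you need is implied by the one the paper asserts; in both variants the forward inclusion ($S_g$ is contained in the defined set) is the easy half, and you prove it correctly.

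The genuine gap is the reverse inclusion, which you correctly flag as the main obstacle but do not close. You need: if $x\notin M$ for some maximal $\pi$-subgroup $M\ni g$, then a \emph{single} $y\in M$ already witnesses that $\langle g,y,x\rangle$ is not a $\pi$-group. Maximality only tells you that $\langle M,x\rangle$ is not a $\pi$-group, i.e.\ that some word $w=m_1x^{a_1}m_2x^{a_2}\cdots$ of non-$\pi$ order exists; this word involves arbitrarily many \emph{distinct} elements of $M$, and no coset or cyclic reduction lets you replace them by one $y$, because each $\langle g,y,x\rangle$ with $y\in M$ can individually be a proper $\pi$-subgroup of $\langle M,x\rangle$ even though their join is not a $\pi$-group --- exactly the join-of-$\pi$-subgroups failure you cite yourself. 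The fallback to tuples of fixed length $m\le k-2$ fails for the same reason: it would suffice only if $M$ were $m$-generated, and the number of generators of a maximal $\pi$-subgroup is not bounded in terms of $k$. What is actually required is the group-theoretic statement that if $\langle x,y\rangle$ is a $\pi$-group for every $y$ in a maximal $\pi$-subgroup $P$, then $x\in P$ (equivalently, $M_g\subseteq M_x$ forces $x\in S_g$); the paper invokes this characterization without proof, and your write-up neither proves it nor cites a source. Until that statement is supplied, the detectability of $S_g$ with the fixed element $g$ --- and hence the lemma --- is not established by your argument.
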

\begin{proof}
		Let $\Pi_K$ be the set of maximal $\pi$-subgroups of $K$ for $K\in \{G,H\}$. For $\pi$-elements $g\in G$
		consider $M_g:=\{x\in G\mid \langle x,g\rangle\text{ is a $\pi$-group}\}$ and similarly define $M_h$ for $h\in H$.
		Then for all $\pi$-elements $x\in G$ it holds that $M_x=\bigcup_{P\in\Pi_G,x\in P} P$ and $S_x=\{g\in G\mid M_x\subseteq M_g\}$ and the same holds for $\pi$ elements in $H$. If $x$ is not distinguished from $y$ by $k$-WL$_\verstwo$ then by Lemma~\ref{AlgoVSGame}, Duplicator has a winning strategy in the corresponding $(k+1)$-pebble game on $(G,H)$ starting in the configuration
		$[(x,\perp^k),(y,\perp^k)]$. As long as the pair $(x,y)$ is pebbled, Duplicator has to map 
		$M_x$ to $M_y$ or otherwise Spoiler can immediately reach a configuration where a $\pi$-group is matched with a non-$\pi$-group and win. But then the same holds for $S_x$ and $S_y$ by the way we characterized these sets above. Since Spoiler can leave the first pebble pair on $(x,y)$ and still use the remaining $k$ pebble pairs freely the claim follows via Lemma~\ref{AlgoVSGame}.
\end{proof}

Lastly, the solvable radical can be handled using the detectability of the derived series we proved earlier in this section (Corollary~\ref{cor:solvability_detectable}). We can slightly improve the bound on the WL-dimension via the following "Thompson-like" characterization of the solvable radical.

\begin{thm}[\cite{MR2228653}, Theorem 1.1]
	Let $G$ be a finite group and $\mathcal{R}(G)$ the solvable radical of $G$. An element $g\in G$ belongs to $\mathcal{R}(G)$ if and only if for every $h\in G$ the subgroup generated by $g$ and $h$ is solvable.
\end{thm}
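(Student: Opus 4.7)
The plan is to prove the two implications separately. The forward direction is elementary, while the converse is a deep result that genuinely relies on the classification of finite simple groups (CFSG).

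For the forward implication, assume $g \in \mathcal{R}(G)$ and let $h \in G$ be arbitrary. In the quotient $G / \mathcal{R}(G)$, the image $\bar{g}$ is trivial, so $\langle \bar{g}, \bar{h}\rangle = \langle \bar{h}\rangle$ is cyclic and hence solvable. Therefore $\langle g,h\rangle \cdot \mathcal{R}(G)$ is a cyclic-by-solvable extension of $\mathcal{R}(G)$, hence solvable, and $\langle g,h\rangle$ is solvable as a subgroup of a solvable group.

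For the converse direction I would argue by contrapositive, using a minimal counterexample $(G,g)$ with $|G|$ minimal subject to $g \notin \mathcal{R}(G)$ but $\langle g,h\rangle$ solvable for every $h \in G$. The first reduction is to the case $\mathcal{R}(G)=1$: if the radical were nontrivial, the pair $(G/\mathcal{R}(G),\bar{g})$ would furnish a strictly smaller counterexample, because the hypothesis that $\langle g,h\rangle$ is solvable for all $h$ descends to quotients while the solvable radical of $G/\mathcal{R}(G)$ is trivial. With $\mathcal{R}(G)=1$ the socle of $G$ is a direct product of non-abelian finite simple groups on which $G$ acts by conjugation; projecting onto a simple component on which the image of $g$ is nontrivial lets one further reduce to an almost simple situation.

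The crux — and the step that genuinely requires CFSG — is the following claim: in every almost simple finite group $G$ with socle $S$, for every non-identity element $g$ there exists an $h \in G$ such that $\langle g,h\rangle$ is not solvable. This must be verified across the families of non-abelian finite simple groups: alternating groups (by explicit combinatorial constructions, e.g.\ producing a copy of $A_5$ together with a suitable conjugate of $g$), classical groups of Lie type (via explicit matrix constructions exploiting the abundance of non-solvable subgroups), exceptional Lie-type groups, and the $26$ sporadic groups (typically by ATLAS-based or character-theoretic arguments). The main obstacle is precisely this CFSG-dependent case analysis; once it is established one lifts the resulting $h$ back through the reductions to obtain an $h \in G$ with $\langle g,h\rangle$ non-solvable, contradicting the assumed choice of counterexample.
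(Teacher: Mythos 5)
This statement is not proved in the paper at all: it is quoted verbatim from the cited reference \cite{MR2228653} (Guralnick--Kunyavski\u{\i}--Plotkin--Shalev) and used as a black box to deduce that the solvable radical is $2$-WL$_{\verstwo}$-detectable. So there is no internal proof to compare yours against; the relevant question is whether your write-up constitutes a proof, and it does not. Your forward direction is complete and correct (modulo saying ``cyclic-by-solvable'' where you mean a solvable normal subgroup with cyclic quotient). But for the converse, the entire mathematical content of the theorem is concentrated in the claim you label as ``the crux'': that in every almost simple group every nontrivial element $g$ admits an $h$ with $\langle g,h\rangle$ non-solvable. You assert that this ``must be verified across the families'' and stop there. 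That is not a proof step; it is a restatement of the theorem in its reduced form. Asserting that a CFSG case analysis exists is exactly the part that took the authors of \cite{MR2228653} a substantial paper to establish.

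Two smaller points on the reduction itself. First, the descent to $\mathcal{R}(G)=1$ is fine, but the subsequent reduction to the almost simple case is glossed: when $\mathrm{soc}(G)$ is a product of several non-abelian simple factors, $g$ may permute the factors nontrivially, so ``projecting onto a simple component on which the image of $g$ is nontrivial'' is not well defined as stated; one has to pass to the normalizer of a component and its induced almost simple quotient, and separately handle elements acting as nontrivial permutations of the components. Second, given that the paper treats this as an external citation, the appropriate resolution here is to cite \cite{MR2228653} rather than to sketch its proof; if you do want to include an argument, the almost simple case analysis must actually be carried out or explicitly delegated to a precise statement in the literature.
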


\begin{cor}
	$\mathcal{R}(G)$ is detectable by $2$-WL$_{{\verstwo}}$.
\end{cor}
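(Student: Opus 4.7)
The plan is to combine the quoted Thompson-type theorem with the fact that the initial coloring $\chi^{\verstwo,2}_{\gamma,0}$ records, by construction, the ordered colored isomorphism type of each 2-generated subgroup $\langle g,h\rangle$. Since solvability is an isomorphism invariant of finite groups, whether $\langle g,h\rangle$ is solvable is already determined by $\chi^{\verstwo,2}_{\gamma,0}(g,h)$. Thus the ``local'' information that the Thompson theorem packages into a test for membership in $\mathcal{R}(G)$ is visible to $2$-WL$_\verstwo$ right at initialization, and no refinement iterations should be necessary.

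To verify detectability in the sense of the definition, I would fix colored groups $(G,\gamma_G)$ and $(H,\gamma_H)$ together with elements $g\in\mathcal{R}(G)$ and $y\in H\setminus\mathcal{R}(H)$, and show that $\chi^{\verstwo,2}_{\gamma_G}(g)\neq\chi^{\verstwo,2}_{\gamma_H}(y)$. By the Thompson characterization applied to~$y$, there exists $h'\in H$ with $\langle y,h'\rangle$ non-solvable. By the same characterization applied to~$g$, for every $x\in G$ the subgroup $\langle g,x\rangle$ is solvable. Consequently, for every $x\in G$ the pairs $(g,x)\in G^{(2)}$ and $(y,h')\in H^{(2)}$ receive different initial $2$-WL$_\verstwo$ colors, because one determines a solvable 2-generated group and the other does not. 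Applying Lemma~\ref{DetectTupleCoords} with the roles of $G$ and $H$ interchanged and with the tuple $(y,h')\in H^{(2)}$ playing the role of ``$t$'' then shows that $g$ and $y$ are distinguished by $2$-WL$_\verstwo$, which is exactly what the definition of a $2$-WL$_\verstwo$-detectable subset selector demands.

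The only subtlety is orienting Lemma~\ref{DetectTupleCoords} correctly: the non-solvability witness lives in $H$, so the lemma must be instantiated with the fixed tuple in $H^{(2)}$ rather than in $G^{(2)}$. Beyond this bookkeeping, there is no genuine obstacle. The Thompson theorem does all of the group-theoretic work by reducing membership in the solvable radical to a property of 2-generated subgroups, and $\chi^{\verstwo,2}_{\gamma,0}$ is designed precisely so that such 2-generated information is already encoded in the initial coloring.
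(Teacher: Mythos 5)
Your proof is correct and follows essentially the same route as the paper's (very terse) proof: both rest on the Thompson-type characterization reducing membership in $\mathcal{R}(G)$ to solvability of $2$-generated subgroups, which is already encoded in the initial coloring of $2$-WL$_\verstwo$. Your explicit invocation of Lemma~\ref{DetectTupleCoords}, with the witness tuple $(y,h')$ taken in $H$, is exactly the right way to handle the quantifier asymmetry that the paper leaves implicit.
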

\begin{proof}
	By the previous theorem, membership to $R(G)$ can be decided in terms of isomorphism types of subgroups generated by pairs $(x,y)$ for fixed $x\in G$.
\end{proof}

\subsection{Simple Groups \& Composition Factors}

Lastly, we consider simple and minimal normal subgroups. Recall that finite simple and almost simple groups can be generated with $2$ and $3$ elements, respectively~\cite{MR1358262}.

\begin{lem}
	$2$-WL$_\verstwo$~identifies finite simple groups.~$3$-WL$_\verstwo$~identifies finite almost simple groups.
\end{lem}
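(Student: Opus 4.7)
The plan is to derive both statements as immediate corollaries of the earlier lemma asserting that $k$-WL$_\verstwo$ identifies every finite $k$-generated group, combined with the classical generation result cited in \cite{MR1358262}: every finite simple group can be generated by $2$ elements, and every finite almost simple group can be generated by $3$ elements. Thus the content of the lemma is essentially bookkeeping — matching the generator bound to the dimension of WL that encodes ordered isomorphism types of that many generators in its initial coloring.

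In detail, first suppose $G$ is a finite simple group. By the cited result there exist $g_1,g_2\in G$ with $\langle g_1,g_2\rangle=G$. Recall that the initial coloring $\chi^{\verstwo,2}_{G,0}$ records the ordered isomorphism type of $\langle g_1,g_2\rangle$ together with the pair $(g_1,g_2)$. Now suppose $H$ is not distinguished from $G$ by $2$-WL$_\verstwo$; by definition of distinguishability this entails a bijection $f\colon G^{(2)}\to H^{(2)}$ preserving stable, and therefore initial, colorings, which in particular forces $|G|^2=|H|^2$ and hence $|G|=|H|$. The tuple $f(g_1,g_2)=(h_1,h_2)\in H^{(2)}$ then has the same initial color as $(g_1,g_2)$, so $\langle h_1,h_2\rangle\leq H$ is isomorphic to $G$ via an isomorphism sending $g_i\mapsto h_i$. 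Since $|\langle h_1,h_2\rangle|=|G|=|H|$, we conclude $\langle h_1,h_2\rangle=H$ and therefore $H\cong G$.

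The second assertion is proved in exactly the same way with $k=3$: if $G$ is almost simple, pick generators $g_1,g_2,g_3\in G$, apply the previous argument to the initial coloring of $3$-WL$_\verstwo$ on the tuple $(g_1,g_2,g_3)$, and use that any $H$ not distinguished from $G$ by $3$-WL$_\verstwo$ must have $|H|=|G|$ and contain a $3$-generated subgroup isomorphic to $G$, forcing $H\cong G$. There is no real obstacle here: the entire proof rests on the two classical generation bounds and on the definition of the Version~$\verstwo$ initial coloring. No pebble-game analysis or refinement argument is required, because identification is already visible after round zero.
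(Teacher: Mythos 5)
Your proposal is correct and is exactly the argument the paper intends: the lemma follows from the generation bounds (simple groups are $2$-generated, almost simple groups are $3$-generated) together with the fact that $k$-WL$_\verstwo$ identifies $k$-generated groups, since the Version~\verstwo{} initial coloring already records the ordered isomorphism type of the subgroup generated by a $k$-tuple. The paper leaves this as immediate from the preceding lemma, whereas you additionally spell out why equal stable colors force equal initial colors and why the order constraint pins down $\langle h_1,\dots,h_k\rangle=H$; both details are sound.
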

In the case of simple groups there is a stronger result, stating that simple groups are uniquely identified among all groups up to isomorphism by their order and the orders of their elements \cite{SimpleBySpectrum}.

\begin{lem}\label{SimpleNormals}
	Let $G=G_1\times\dots\times G_k\times A$ where for all $i$, $G_i$ is non-abelian simple and $A$ is abelian. Let $S\trianglelefteq G$ be non-abelian simple, then~$S=\{1\}\times\dots\times\{1\}\times G_i\times\{1\}\times\dots\times\{1\}
	$ for some~$i$.
\end{lem}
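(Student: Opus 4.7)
The plan is to analyze $S$ via the coordinate projections of the direct product. Write $\pi_i: G \to G_i$ for $i \in \{1,\dots,k\}$ and $\pi_A: G \to A$ for the canonical projections. Since $S$ is non-abelian simple, the quotient $\pi_A(S)$ of $S$ by a normal subgroup with abelian target must be trivial, so $S \subseteq G_1 \times \dots \times G_k \times \{1\}$.

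Next I would show that each $\pi_i(S)$ is normal in $G_i$, which then forces $\pi_i(S) \in \{1, G_i\}$ by simplicity of $G_i$. The key observation is that for $s \in S$ and $g \in G_i$ (embedded in $G$ as $(1,\dots,g,\dots,1)$), the conjugate $gsg^{-1}$ again lies in $S$ since $S$ is normal in $G$, and $\pi_i(gsg^{-1}) = g\pi_i(s)g^{-1}$. Let $I := \{i : \pi_i(S) = G_i\}$; since $S$ is nontrivial, $I$ is nonempty. For every $i \in I$, the subgroup $S \cap \ker(\pi_i)$ is normal in $S$, and since its quotient $\pi_i(S) = G_i$ is nontrivial, it must be trivial by simplicity of $S$. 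Hence $\pi_i|_S$ is an isomorphism $S \xrightarrow{\sim} G_i$ for every $i \in I$.

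The main step, and the one doing the real work, is to exclude $|I| \geq 2$. Suppose for contradiction that $i,j \in I$ with $i \neq j$. Pick any $s \in S \setminus \{1\}$; then $\pi_j(s) \neq 1$, so $\pi_j(s) \notin Z(G_j) = \{1\}$ and there exists $g \in G_j$ with $g\pi_j(s)g^{-1} \neq \pi_j(s)$. Embedding $g$ in the $j$-th factor of $G$, conjugation by $g$ fixes every coordinate except the $j$-th, so $\pi_i(gsg^{-1}) = \pi_i(s)$ but $\pi_j(gsg^{-1}) \neq \pi_j(s)$. Thus $gsg^{-1}s^{-1}$ is a nontrivial element of $S$ lying in $\ker(\pi_i)$, contradicting $S \cap \ker(\pi_i) = \{1\}$.

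Consequently $|I| = 1$, say $I = \{i\}$. Then $S \subseteq \{1\} \times \dots \times G_i \times \dots \times \{1\}$ and $\pi_i|_S$ is a bijection onto $G_i$, so $S$ coincides with the $i$-th factor as claimed. The only delicate point is the conjugation argument of the previous paragraph; everything else reduces to simplicity plus the definition of a direct product.
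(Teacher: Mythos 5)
Your proof is correct, and it takes a genuinely different route from the paper. The paper disposes of the lemma in one line by citing a structure theorem for normal subgroups of direct products of non-abelian simple groups (Theorem 4.3A of Dixon--Mortimer): if $S$ were not one of the factors, that theorem forces $S\leq C_G(G_1\times\dots\times G_k\times\{1\})=Z(G_1)\times\dots\times Z(G_k)\times A=Z(G)$, contradicting non-abelianness of $S$. You instead prove everything from first principles via coordinate projections: perfectness of $S$ kills the abelian coordinate, simplicity of the $G_i$ forces each $\pi_i(S)\in\{1,G_i\}$, simplicity of $S$ makes $\pi_i|_S$ injective whenever $\pi_i(S)=G_i$, and the commutator $[g,s]$ with $g$ supported in a single coordinate rules out two surjective projections. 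This is essentially a self-contained proof of the special case of the cited theorem that the paper uses, so your argument buys independence from the reference at the cost of length; the paper's version buys brevity by treating the structure theorem as a black box. All steps in your write-up check out, including the key centerlessness observation $Z(G_j)=\{1\}$ that supplies the non-commuting element $g$.
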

\begin{proof}
	Otherwise~\cite[Theorem 4.3A]{MR1409812} would imply~$S\leq C_G(G_1\times\dots\times G_k\times\{1\})=Z(G)$ which contradicts the fact that~$S$ is non-abelian.
\end{proof}

Let us recall that a group is called \textbf{characteristically simple} if it does not contain any characteristic subgroups. Finite characteristically simple groups are precisely the finite direct products of isomorphic simple groups~\cite{zassenhaus1999theory}.

\begin{lem}
	Let $G$ be finite and characteristically simple then $G\equiv^{{\verstwo}}_3 H$ if and only if $G\cong H$. In other words, $3$-WL$_\verstwo$ identifies characteristically simple groups.
\end{lem}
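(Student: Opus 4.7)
The argument splits according to whether $G$ is abelian. If $G$ is abelian, then characteristic simplicity forces $G \cong C_p^k$ for some prime $p$; since $G \equiv^\verstwo_3 H$ implies $G \equiv^\verstwo_2 H$, Corollary~\ref{cor:abelian-identified} (that $2$-WL$_\verstwo$ identifies all finite abelian groups) immediately gives $H \cong G$. The rest of the plan addresses the non-abelian case, where $G \cong S^k$ with $S$ a non-abelian finite simple group and direct factors $S_1, \dots, S_k \trianglelefteq G$.

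The key step is to characterize the non-trivial elements of the simple direct factors combinatorially via their normal closures. Specifically, I would show that
\[
  X_G := \{g \in G : \langle g^G \rangle \cong S\}
\]
coincides with $\bigcup_i (S_i \setminus \{1\})$, so $|X_G| = k(|S|-1)$. The inclusion $\supseteq$ is immediate: for $g \in S_i \setminus \{1\}$, the subgroup $\langle g^G \rangle$ is a non-trivial normal subgroup of the simple group $S_i \cong S$, hence equal to $S_i$. For $\subseteq$, I would rule out elements of support size $m \geq 2$ by showing $\langle g^G \rangle = \prod_{i \in I(g)} S_i \cong S^m$; this relies on $Z(S) = 1$, so conjugating $g$ by $\{1\}\times\cdots\times S_i\times\cdots\times\{1\}$ yields non-trivial commutators whose normal closure sweeps out all of $S_i$.

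Now, assuming $G \equiv^\verstwo_3 H$, fix a color-preserving bijection $f : G \to H$. Corollary~\ref{CorNormalClosuresWL} yields $\langle f(g)^H \rangle \equiv^\verstwo_2 \langle g^G \rangle$ for every $g$, and the preceding lemma (that $2$-WL$_\verstwo$ identifies finite simple groups) upgrades this to an actual isomorphism of simple normal closures. Hence $f(X_G) = X_H$, with $X_H$ defined in $H$ analogously. Every $h \in X_H$ sits inside the simple normal subgroup $\langle h^H \rangle \cong S$ of $H$; enumerating the distinct such subgroups as $T_1, \dots, T_m$, any two $T_i \neq T_j$ intersect trivially (the intersection is normal and proper in each), hence commute, so $X_H = \bigcup_j (T_j \setminus \{1\})$ has size $m(|S|-1)$. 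Matching cardinalities forces $m = k$, and then $T_1 \cdots T_k = T_1 \times \cdots \times T_k$ has order $|S|^k = |H|$, giving $H = T_1 \times \cdots \times T_k \cong S^k \cong G$.

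The main obstacle is the normal-closure characterization of $X_G$, specifically the implication that support size at least $2$ produces a non-simple normal closure; the core ingredient is the center-freeness of non-abelian finite simple groups. Once $X_G$ is identified, the remainder is a straightforward cardinality comparison together with Lemma~\ref{SimpleNormals}-style reasoning to assemble $H$ as a direct product of copies of $S$.
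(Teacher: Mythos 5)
Your proof is correct, and its overall strategy mirrors the paper's: reduce to the non-abelian case $G\cong S^k$, exhibit a $3$-WL$_\verstwo$-detectable set whose cardinality counts the simple normal subgroups isomorphic to $S$, transfer that count to $H$, and reassemble $H$ as a direct product of $k$ pairwise-commuting, trivially-intersecting copies of $S$ using $|H|=|S|^k$. The difference lies in the detection gadget. The paper exploits that simple groups are $2$-generated: via Corollary~\ref{normalSub} (normality of $\langle g_1,g_2\rangle$ is visible to $3$-WL$_\verstwo$) it detects the set $M_G$ of \emph{pairs} generating a normal subgroup isomorphic to $T$, and counts the projection $E_G$ of $M_G$ onto the first coordinate, which by Lemma~\ref{SimpleNormals} has size $m\cdot|\{t\in T\mid \exists x:\langle t,x\rangle=T\}|$. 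You instead detect \emph{single} elements whose normal closure is isomorphic to $S$, via Corollary~\ref{CorNormalClosuresWL} (with $k=3$, giving $\equiv^\verstwo_2$ of normal closures) combined with the fact that $2$-WL$_\verstwo$ identifies simple groups; your set $X_G=\bigcup_i(S_i\setminus\{1\})$ has the cleaner count $k(|S|-1)$, and your verification that elements of support $\geq 2$ have normal closure $S^{|I(g)|}\not\cong S$ (using $Z(S)=1$) is the right argument, essentially re-deriving Lemma~\ref{SimpleNormals}. Both routes use only tools already established for $k=3$, Version~\verstwo; yours avoids the $2$-generation fact and the pair-projection bookkeeping, at the cost of proving the support-size claim explicitly.
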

\begin{proof}
	By assumption $G\cong T^m$ for some simple group $T$. If $T$ is abelian then so is $G$ and we already discussed the abelian case. Thus assume that $T$ is non-abelian. Since simple groups are $2$-generated, Lemma~\ref{normalSub} shows that $3$-WL$_\verstwo$ detects the set $M_G$ of all pairs $(g_1,g_2)\in G^{(2)}$ that generate a normal subgroup isomorphic to $T$. Let $E_G:=\{g\in G\mid \exists x\in G: (g,x)\in M_G\}$.
	By Lemma~\ref{SimpleNormals}, the normal subgroups of $G$ isomorphic to $T$ are exactly the $m$ different copies of $T$ defining $G$, so $|E_G|=m|\{t\in T\mid \exists x\in T: \langle t,x\rangle=T\}|$. If $G\equiv_3 H$, the
	 corresponding sets $E_H\subseteq H$ and $M_H\subseteq H^{(2)}$, which are defined in the same way as $E_G$ and $M_G$, must be indistinguishable from $E_G$ and $M_G$ via $3$-WL$_\verstwo$. Thus $H$ contains at least $m$ different normal subgroups isomorphic to $T$. Due to simplicity of $T$ they must intersect trivially and centralize each other (given distinct normal subgroups $T_1,T_2\cong T$, $T_1\cap T_2$ is normal in $T_i$ and thus $[T_1,T_2]\leq T_1\cap T_2=\{1\}$) and considering $|H|=|G|=|T|^m$ we obtain $H\cong T^m$.
\end{proof}

\begin{lem}\label{LemDirectProdSimple}
	For $k\geq 3$, $k$-WL$_{{\verstwo}}$ identifies finite direct products of simple groups.
	More precisely, consider $G=T_1^{m_1}\times\dots\times T_r^{m_r}$ with pairwise non-isomorphic simple groups $T_1,\dots,T_r$. Then
	$\{1\}\times\dots\times\{1\}\times T_i^{m_i}\times\{1\}\times\dots\times\{1\}$ is $3$-WL$_\verstwo$-detectable for all $i$.
\end{lem}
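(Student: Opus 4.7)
The plan is to prove $3$-WL$_\verstwo$-detectability of each isotypic component $T_i^{m_i}$ and then derive the identification claim using Corollary~\ref{cor:direct_product_of_detectables} together with the preceding lemma on characteristically simple groups. Reorder so that $T_1,\dots,T_s$ are non-abelian simple and $T_{s+1},\dots,T_r\cong \mathbb{Z}_{p_{s+1}},\dots,\mathbb{Z}_{p_r}$ are abelian (with pairwise distinct primes, by non-isomorphy).

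For the abelian factors ($i>s$), I would first identify $T_i^{m_i}$ with the $p_i$-radical $\mathcal{O}_{p_i}(G)$. Any normal $p_i$-subgroup $N$ of $G$ projects to a normal $p_i$-subgroup in each direct factor $T_j^{m_j}$. When $T_j$ is non-abelian simple, the normal subgroups of $T_j^{m_j}$ are subproducts of copies of $T_j$, and since non-abelian simple groups are never $p$-groups, no such nontrivial subproduct is a $p_i$-group. When $T_j\cong\mathbb{Z}_{p_j}$ with $p_j\neq p_i$, there are no nontrivial $p_i$-elements at all. Hence the projection of $N$ to $T_j^{m_j}$ is trivial for all $j\neq i$, so $N\leq T_i^{m_i}$ and $\mathcal{O}_{p_i}(G)=T_i^{m_i}$. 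Detectability then follows from the earlier corollary on $\pi$-radicals applied to $\pi=\{p_i\}$.

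For the non-abelian factors ($i\leq s$), I consider the set
\[M_i(G):=\{(g_1,g_2)\in G^{(2)}\mid \langle g_1,g_2\rangle\trianglelefteq G\text{ and }\langle g_1,g_2\rangle\cong T_i\},\]
which is a union of color classes of the $2$-coloring induced by $3$-WL$_\verstwo$: the isomorphism type of $\langle g_1,g_2\rangle$ lies in the initial color of $(g_1,g_2,1)$, and normality of $\langle g_1,g_2\rangle$ is detected via Corollary~\ref{normalSub}. Lemma~\ref{DetectTupleCoords} then lifts this to $3$-WL$_\verstwo$-detectability of the existential projection $E_i(G):=\{g\in G\mid \exists x\in G:(g,x)\in M_i(G)\}$. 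By Lemma~\ref{SimpleNormals}, the normal subgroups of $G$ isomorphic to $T_i$ are precisely the $m_i$ coordinate copies of $T_i$ inside the $i$-th factor, so $E_i(G)$ is their set-theoretic union and $\langle E_i(G)\rangle=T_i^{m_i}$. Detectability of $T_i^{m_i}$ then follows from Lemma~\ref{lem:basicClosure}(3). For identification, given $G\equiv^{\verstwo}_3 H$, applying Corollary~\ref{cor:direct_product_of_detectables} to the detected decomposition $G=T_1^{m_1}\times\cdots\times T_r^{m_r}$ yields $H=H_1\times\cdots\times H_r$ with $H_i\equiv^{\verstwo}_3 T_i^{m_i}$; since each $T_i^{m_i}$ is characteristically simple, the preceding lemma gives $H_i\cong T_i^{m_i}$ and hence $H\cong G$.

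The main obstacle is the non-abelian case. Passing from a detectable relation on pairs of elements (generating a simple normal subgroup of the correct isomorphism type) to detectability of the isotypic subgroup $T_i^{m_i}$ as a whole requires chaining together several independent tools: the initial coloring's grasp of $2$-generated isomorphism types, Corollary~\ref{normalSub} for normality, Lemma~\ref{DetectTupleCoords} for existential projection over $3$-tuples with a dummy entry fixed to $1$, and Lemma~\ref{lem:basicClosure}(3) for closure under generation. A subsidiary concern is the structural analysis underlying $\mathcal{O}_{p_i}(G)=T_i^{m_i}$ in the abelian case, which relies on the fact that non-abelian simple groups are never $p$-groups and on the standard description of the normal subgroups of $T^m$ (for $T$ non-abelian simple) as subproducts of the $m$ coordinate copies of $T$.
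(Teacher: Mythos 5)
Your proposal is correct and follows essentially the same route as the paper: for the non-abelian isotypic components it uses exactly the paper's argument (pairs generating normal subgroups isomorphic to $T_i$, detected via the initial coloring and Corollary~\ref{normalSub}, combined with Lemma~\ref{SimpleNormals} and closure under generation), and it concludes identification via Corollary~\ref{cor:direct_product_of_detectables} and the preceding lemma on characteristically simple groups. The only cosmetic difference is in the abelian case, where you realize $T_i^{m_i}$ as the $p_i$-radical $\mathcal{O}_{p_i}(G)$ instead of as the set of central $p_i$-elements; both descriptions give the same subgroup and both detectability facts are already established earlier in the paper.
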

\begin{proof}
	As in the previous proof, $3$-WL$_\verstwo$ can distinguish pairs that generate a normal subgroup isomorphic to some fixed non-abelian simple group from other pairs. Together with Lemma~\ref{SimpleNormals} this implies that direct factors of
the form $\{1\}\times\dots\times\{1\}\times T_i^{m_i}\times\{1\}\times\dots\times\{1\}$ for non-abelian simple $T_i$ are detected in $G$ by $3$-WL$_\verstwo$. For abelian $T_i^{m_i}$ note that $T_i\cong C_{p_i}$ for some prime $p_i$ and then $\{1\}\times\dots\times\{1\}\times T_i^{m_i}\times\{1\}\times\dots\times\{1\}$ coincides with the detected set of central $p_i$-elements in $G$.
In conclusion, $G$ is identified by $3$-WL$_\verstwo$ as a direct product of detected subgroups (see Corollary~\ref{cor:direct_product_of_detectables}) which are themselves identified by $3$-WL$_\verstwo$ by the previous lemma.
\end{proof}

From our observations on the WL-detectability of the derived series we can deduce that $4$-WL$_{{\verstwo}}$ implicitly distinguishes solvable groups according to composition factors and their respective multiplicities. We show that this is true for non-solvable groups as well.

Recall that the \textbf{socle} $soc(G)$ of a group $G$ is the subgroup generated by all minimal normal subgroups of $G$. In the case of finite $G$, the socle is a direct product of minimal normal subgroups and minimal normal subgroups are characteristically simple. So for finite groups $G$ we can write
\[ 
	soc(G)=N_1\times\dots\times N_t
\] with each $N_i$ a minimal normal subgroups of $G$, $N_i\cong S_i^{m_i}$ is a direct power of some simple group $S_i$ and we may assume that the simple groups are pairwise non-isomorphic~\cite[Section 4.3]{MR1409812}. 

\begin{lem}
	Let $G$ be a finite group and write $soc(G)=N_1\times\dots\times N_t$ as above. Then for all $i$, $4$-WL$_{{\verstwo}}$ detects $N_i$ in $G$ and in particular $soc(G)$ is detected as well.
\end{lem}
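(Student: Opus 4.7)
The plan is to detect each isotypic component $N_i$ as the subgroup generated by a detectable set $M_i$, where $M_i$ consists of those elements $g\in G$ whose normal closure $\langle g^G\rangle$ is a minimal normal subgroup of $G$ abstractly isomorphic to $S_i^k$ for some $k\geq 1$. The identity $N_i=\langle M_i\rangle$ holds because $N_i$ is the sum of all minimal normal subgroups of $G$ of iso type $S_i^k$ (for various $k$), and every nontrivial element of such a minimal normal belongs to $M_i$.

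First I would argue that the abstract isomorphism type of $\langle g^G\rangle$ is $4$-WL$_\verstwo$-detectable as a property of $g$ whenever the closure happens to be a direct product of simple groups. This combines Corollary~\ref{CorNormalClosuresWL}, which reduces the $4$-WL$_\verstwo$-color of $g$ to the $3$-WL$_\verstwo$-equivalence class of the colored group $(\langle g^G\rangle,\gamma|_{\langle g^G\rangle})$, with Lemma~\ref{LemDirectProdSimple} and Corollary~\ref{cor:abelian-identified}, which together identify all finite direct products of simple groups via $3$-WL$_\verstwo$. In particular, for each fixed $S_i$ the condition ``$\langle g^G\rangle\cong S_i^k$ for some $k\geq 0$'' is $4$-WL$_\verstwo$-detectable.

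Second, I would verify that the condition ``$\langle g^G\rangle$ is a minimal normal subgroup of $G$'' is also $4$-WL$_\verstwo$-detectable. This can be recast as ``for every $y\in\langle g^G\rangle\setminus\{1\}$ one has $g\in\langle y^G\rangle$'' (which, by inclusion, is equivalent to $\langle y^G\rangle=\langle g^G\rangle$). Detectability of normal closures (Lemma~\ref{lem:basicClosure}) combined with the forall-construction of Lemma~\ref{LemGroupExpression} would yield this. The subtlety is that $\langle g^G\rangle$ depends on $g$ and hence is not literally a subset selector in the sense of Lemma~\ref{LemGroupExpression}; I would work around this by passing to the pebble-game characterization of Lemma~\ref{AlgoVSGame}, using the five available pebbles to simultaneously fix $g$ and quantify over an auxiliary element $y$. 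Combining the two conditions yields detectability of $M_i$, and then $N_i=\langle M_i\rangle$ follows from Lemma~\ref{lem:basicClosure}~(Part~3); finally $soc(G)=N_1\cdots N_t$ is detectable as a product of detectable subgroups.

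The hard part will be the abelian case $S_i\cong C_p$, where the abstract isomorphism type of $\langle g^G\rangle$ alone does not distinguish socle elements from non-socle elements. For example, in $G=C_p\wr C_p$ the elements of the base $C_p^p$ not lying in the diagonal $C_p=soc(G)$ still have normal closures isomorphic to $C_p^p$. Hence the extra minimality requirement is essential in the abelian case, and the bulk of the technical work lies in the pebble-game argument alluded to above. For non-abelian $S_i$, in contrast, the isomorphism-type condition alone already suffices, since a normal subgroup of $G$ abstractly isomorphic to $S_i^k$ automatically decomposes as a direct product of $G$-minimal normal subgroups and therefore sits inside the isotypic component $N_i$.
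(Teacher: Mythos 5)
Your overall strategy is the right one and matches the paper's in outline: detect the elements whose normal closure is a minimal normal subgroup, use Corollary~\ref{CorNormalClosuresWL} to read off properties of $\langle g^G\rangle$ from the color of $g$, and invoke Lemma~\ref{LemDirectProdSimple} to separate the isotypic components. Your counterexample $C_p\wr C_p$ correctly isolates why minimality is the essential condition in the abelian case, and your remark that for non-abelian $S_i$ the isomorphism type of the closure alone suffices is also correct. The one genuine gap is precisely the step you defer: the detectability of the condition ``$\langle g^G\rangle$ is a minimal normal subgroup of $G$.'' You observe (correctly) that this is not a group expression in the sense of Lemma~\ref{LemGroupExpression} because the range of the universal quantifier depends on $g$, and you propose a five-pebble game in which Spoiler fixes $g$ and quantifies over $y\in\langle g^G\rangle$ — but you never carry this out, and it is not routine: forcing Duplicator to respect membership in $\langle g^G\rangle$ while $g$ is pebbled already consumes pebbles (the normal-closure arguments of Lemma~\ref{lem:basicClosure} use three variables in Version~\verstwo), and you would still need spare pebbles to certify $g\in\langle y^G\rangle$ afterwards. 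As written, the hardest step of the proof is an unexecuted sketch.

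The paper closes exactly this step without any new pebble game, by applying Corollary~\ref{CorNormalClosuresWL} twice. Suppose $\chi^{\verstwo,4}_G(x)=\chi^{\verstwo,4}_G(y)$ where $N_x:=\langle x^G\rangle$ is minimal normal and $N_y:=\langle y^G\rangle$ is not. A first application gives $N_x\equiv^\verstwo_3 N_y$ as groups colored by the restriction of the stable $4$-WL coloring; in particular $|N_x|=|N_y|$ and the multisets of stable colors on $N_x$ and $N_y$ agree. But minimality of $N_x$ means every nontrivial $x'\in N_x$ has $\langle x'^G\rangle=N_x$, whereas non-minimality of $N_y$ yields a nontrivial $y'\in N_y$ with $\langle y'^G\rangle\lneq N_y$, hence $|\langle y'^G\rangle|<|N_x|$. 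A second application of Corollary~\ref{CorNormalClosuresWL} (order comparison suffices) shows $\chi^{\verstwo,4}_G(y')\neq\chi^{\verstwo,4}_G(x')$ for every nontrivial $x'\in N_x$, contradicting the equality of color multisets. If you substitute this argument for your pebble-game sketch, the rest of your proposal goes through; whether you then split the socle into isotypic pieces before or after taking generated subgroups (the paper detects $soc(G)$ first and then applies Lemma~\ref{LemDirectProdSimple} to it; you detect each $M_i$ directly) is an immaterial difference.
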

\begin{proof}
	We first show that $4$-WL$_\verstwo$ detects the set of elements $x\in G$ whose normal closures are minimal
	normal subgroups of $G$. Assume $x$ has a normal closure $N_x$ that is minimal normal in $G$ and suppose $y\in G$ has non-minimal normal closure $N_y$.
	By Corollary~\ref{CorNormalClosuresWL}, if $x$ is not distinguished from $y$ then $N_x\equiv^{\verstwo}_3 N_y$. But by minimality of $N_x$, for each $x'\in N_x$ it holds $\langle x'^G\rangle=N_x$ while there is some $y'\in N_y$ with
	$\langle y'^G\rangle\lneq N_y$. So by Corollary~\ref{CorNormalClosuresWL} $y'$ is distinguished from each $x'\in N_x$ by $3$-WL$_\verstwo$.
 Thus, $4$-WL$_\verstwo$ identifies elements whose normal closures are minimal normal subgroups and together they generate $soc(G)$, so the latter is detected as well according to Lemma~\ref{lem:basicClosure}.
	
	The claim then follows from Lemma~\ref{LemDirectProdSimple} together with the fact that $soc(G)$ is a direct product of simple groups.
\end{proof}

\begin{thm}
 Let $k\geq 5$ and $G\equiv^{{\versone}}_k H$ then $G$ and $H$ have the same composition factors (with multiplicities).
\end{thm}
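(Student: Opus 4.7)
The plan is to induct on $|G|$, using the socle as the canonical normal subgroup through which to quotient. The base case $|G|=1$ is vacuous. For the inductive step, let $G \equiv^{\versone}_k H$ with $k \geq 5$ and $|G| > 1$, and set $N_G := soc(G)$. By the previous lemma, $soc$ is $4$-WL$_{\verstwo}$-detectable, and Lemma~\ref{CompareVersions} upgrades this to $5$-WL$_{\versone}$-detectability (hence $k$-WL$_{\versone}$-detectability for any $k \geq 5$). Theorem~\ref{MainThm1}(2) then supplies a color-preserving bijection $\Psi \colon G \to H$ such that $\Psi(N_G) = N_H := soc(H)$, together with the two derived equivalences
\[
(N_G, \gamma_G|_{N_G}) \equiv^{\versone}_k (N_H, \gamma_H|_{N_H}) \qquad \text{and} \qquad (G/N_G, \overline{\gamma_G}) \equiv^{\versone}_k (H/N_H, \overline{\gamma_H}).
\]

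Now I would handle the two pieces separately. For the socle, Lemma~\ref{CompareVersions} gives $N_G \equiv^{\verstwo}_3 N_H$, and since $N_G$ is a direct product of simple groups, Lemma~\ref{LemDirectProdSimple} identifies it up to isomorphism: thus $N_G \cong N_H$, and in particular they share the same composition factors with multiplicities. For the quotient, $G/N_G$ is a group of strictly smaller order (since $soc(G) \neq \{1\}$ for any nontrivial finite group), and the two quotients are $\equiv^{\versone}_k$-equivalent as colored groups, so the induction hypothesis applies and yields matching composition factors for $G/N_G$ and $H/N_H$.

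Finally I would invoke the Jordan--H\"older theorem: any composition series of $G$ can be obtained by concatenating a composition series of $N_G$ with a lift of a composition series of $G/N_G$, so the multiset of composition factors of $G$ is the disjoint union of those of $N_G$ and of $G/N_G$ (with multiplicities), and similarly for $H$. Combining the two previous paragraphs yields the claim. The only subtle point is ensuring the induction hypothesis is applicable to colored groups, which it is because throughout the paper the WL framework is formulated over colored groups and Theorem~\ref{MainThm1} produces precisely the induced quotient coloring required; beyond that, the argument is a clean structural induction, so I do not anticipate any genuine obstacle.
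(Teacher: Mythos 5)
Your proof is correct and follows essentially the same route as the paper: detect the socle, apply Theorem~\ref{MainThm1} to split into $soc(G)\equiv soc(H)$ and $G/soc(G)\equiv H/soc(H)$, identify the socle's composition factors via Lemma~\ref{LemDirectProdSimple}, recurse on the quotient, and assemble with Jordan--H\"older. The only cosmetic difference is your choice of base case ($|G|=1$ rather than $G=soc(G)$), which is equally valid since the socle of a nontrivial finite group is nontrivial.
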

\begin{proof}
	Together with Lemma~\ref{CompareVersions} the previous lemma implies that $soc(G)$ and $soc(H)$ are detected in $G$ and $H$, respectively. By Theorem~\ref{MainThm1} we obtain $soc(G)\equiv^{{\versone}}_k soc(H)$
	as well as $G/soc(G)\equiv^{{\versone}}_k H/soc(H)$. Then first $soc(G)$ and $soc(H)$ have the same composition factors (with multiplicities) by Lemma~\ref{LemDirectProdSimple} and inductively the same holds for $G/soc(G)$ and $H/soc(H)$. Note that in the base case it holds $G=soc(G)$ and $H=soc(H)$ so in this case we are done.
	Now by normality of $soc(G)$, the composition factors of $G$ are precisely the composition factors of $soc(G)$ together with the composition factors of $G/soc(G)$ (in each case considered with multiplicities) and the same holds for $H$, so the claim follows inductively.
\end{proof}

\section{WL-Refinement and Direct Products}\label{SecDirectDecompositions}
In this final section we study the detectability of direct product structures in finite groups. The section is organized similar to~\cite{DBLP:conf/icalp/KayalN09}, in the sense that we first consider direct products where one factor is an abelian group (the semiabelian case) and reduce to these the general case later on. There is also a similarity in the way the direct factors are computed modulo central elements. However, a crucial difference between our setting and the one in~\cite{DBLP:conf/icalp/KayalN09} is that in the latter computations can be executed as long as they are efficient, where in our case, we are analyzing a fixed algorithm that cannot make non-canonical choices.

\begin{dfn}
	Given groups $G_1$ and $G_2$, central subgroups $Z_1\leq Z(G_1)$, $Z_2\leq Z(G_2)$ and an isomorphism $\varphi:Z_1 \to Z_2$, we can form the \textbf{central product} of $G_1$ and $G_2$ with respect to $\varphi$ via
	\[
		G_1\times_{\varphi} G_2 := G_1\times G_2/\{(g,\varphi(g^{-1}))\mid g\in Z_1\}.
	\]A group $G$ is the \textbf{(internal) central product} of subgroups $G_1,G_2\leq G$, if it holds that $G=\langle G_1,G_2\rangle$ and $[G_1,G_2]=\{1\}$.
\end{dfn}

Our main difficulty is that a group can admit several inherently different central decompositions. In contrast to that recall that indecomposable \textit{direct} decompositions are unique in the following sense.

\begin{lem}\label{lem:uniquenes:of:direct:factors}
	Let $G=G_1\times\dots\times G_m=H_1\times\dots\times H_n$ be two decompositions of $G$
	into directly indecomposable factors. Then $n=m$ and there is a permutation $\sigma\in S_m$
	such that for all $i$ we have $G_i\cong H_{\sigma(i)}$ and $G_iZ(G)=H_{\sigma(i)}Z(G)$. 
\end{lem}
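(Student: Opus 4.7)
The plan is to invoke the classical Krull--Remak--Schmidt theorem for finite groups, of which this lemma is a standard formulation. Since the result itself is well-known, I would only sketch how its two assertions (the bijection of factors up to isomorphism, and the stronger coset identity) arise, using the standard machinery of normal endomorphisms and Fitting's lemma.

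First I would attach to the decomposition $G=G_1\times\dots\times G_m$ the system of projections $\pi_i\colon G\to G_i$, viewed as normal endomorphisms of $G$, and similarly define $\rho_j\colon G\to H_j$. The compositions $\pi_i\circ\rho_j$, restricted to $G_i$, sum (in the sense of commuting normal endomorphisms, which is well-defined because the images pairwise commute) to the identity on $G_i$. Fitting's lemma, applied to the directly indecomposable group $G_i$ (a finite group trivially satisfies the chain condition), asserts that every normal endomorphism of $G_i$ is either an automorphism or nilpotent. This forces at least one index $\sigma(i)$ for which $\pi_i\rho_{\sigma(i)}|_{G_i}$ is an automorphism of $G_i$. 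A standard exchange argument then shows that $\sigma$ is a bijection: one replaces $H_{\sigma(i)}$ by $G_i$ in the second decomposition, verifies that this still yields a direct decomposition of $G$, and iterates. In particular one obtains $m=n$ and $G_i\cong H_{\sigma(i)}$.

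Second, to upgrade the conclusion from $G_i\cong H_{\sigma(i)}$ to the coset identity $G_iZ(G)=H_{\sigma(i)}Z(G)$, I would fix $g\in G_i$ and write $g=h_1\cdots h_n$ in the second decomposition, where $h_j\in H_j$. Since $G$ also equals $G_i\times\prod_{i'\neq i}G_{i'}$, we have $[g,G_{i'}]=1$ for all $i'\neq i$, so $g$ is central modulo $G_i$. Transporting these commutator relations into the $H$-decomposition and using uniqueness of coordinates forces each $h_j$ with $j\neq\sigma(i)$ into $Z(H_j)\subseteq Z(G)$, while $h_{\sigma(i)}\equiv g\pmod{Z(G)}$. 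Running this over all $g\in G_i$ yields $G_iZ(G)\subseteq H_{\sigma(i)}Z(G)$, and the reverse inclusion follows by symmetry.

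The main obstacle, and the only place where real care is needed, is in the exchange step: verifying that replacing $H_{\sigma(i)}$ by $G_i$ truly produces a direct decomposition of $G$. This relies on the identity $G_i\cap\prod_{j\neq\sigma(i)}H_j\leq Z(G)$, which follows from the commuting of factors together with the fact that $\pi_i\rho_{\sigma(i)}|_{G_i}$ is an automorphism, enabling one to recover each element of $H_{\sigma(i)}$ from $G_i$ and the remaining $H_j$. This bookkeeping is precisely where finiteness (as a chain condition) is genuinely used through Fitting's lemma.
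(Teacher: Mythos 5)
Your overall route is the same as the paper's: the paper simply cites the Krull--Remak--Schmidt theorem for the first assertion and refers to Kayal--Nezhmetdinov (Corollary 6) for the coset identity, while you sketch the standard normal-endomorphism/Fitting's-lemma proof and then derive the coset identity by hand. The first paragraph is the textbook argument and is fine.

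One step in your derivation of $G_iZ(G)=H_{\sigma(i)}Z(G)$ does not go through as written. You fix $g\in G_i$, note $[g,G_{i'}]=1$ for $i'\neq i$, and claim that ``transporting these commutator relations into the $H$-decomposition'' forces $h_j\in Z(H_j)$ for $j\neq\sigma(i)$. But transporting $[g,x]=1$ for $x\in\prod_{i'\neq i}G_{i'}$ only yields $[h_j,\rho_j(x)]=1$ coordinatewise, i.e.\ that $h_j$ centralizes $\rho_j\bigl(\prod_{i'\neq i}G_{i'}\bigr)$; this projection need not be all of $H_j$, so $h_j\in Z(H_j)$ does not follow from these relations alone. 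The fix is immediate from the exchange property you already established in the first paragraph: since $G=G_i\times\prod_{j\neq\sigma(i)}H_j$, every $g\in G_i$ satisfies $[g,H_j]=1$ for all $j\neq\sigma(i)$, and now taking $x$ to range over $H_j$ itself (whose $j$-th coordinate is all of $H_j$) gives $[h_j,H_j]=1$, hence $h_j\in Z(H_j)\leq Z(G)$ and $g\in h_{\sigma(i)}Z(G)\subseteq H_{\sigma(i)}Z(G)$. With that substitution the argument is complete.
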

\begin{proof}
	The first part is the well-known Krull-Remak-Schmidt Theorem and the addition that $G_iZ(G)=H_{\sigma(i)}Z(G)$ can be easily derived (see for example~\cite[Corollary 6]{DBLP:conf/icalp/KayalN09})
\end{proof}

In particular, the collection of subgroups $\{G_iZ(G)\}_{1\leq i\leq m}$ is invariant under automorphisms as a whole. Later we show that the union of these subgroups, i.e.,~$\bigcup_{i=1}^m G_iZ(G)$, is $5$-WL$_\versone$-detectable.
\begin{dfn}
	We say a central decomposition $G=H_1H_2$ is \textbf{directly induced} if there are subgroups $K_i\leq H_i$
	such that $G=K_1\times K_2$ and $H_i=K_iZ(G)$.
\end{dfn}

Whenever there is a pairing between the indecomposable direct factors of two groups, such that each pair is indistinguishable via WL-refinement, then the groups themselves are indistinguishable as well. This is a simple observation in terms of pebble games (given in the next lemma). The other direction, namely that indistinguishable groups always admit such a pairing of indecomposable direct factors, is investigated in the remainder of this section and turns out to be highly non-trivial.

\begin{lem}
	If $J\in\{{\versone},{\verstwo}\}$, $k\geq 3$,
	$G_1\equiv^J_k H_1$ and $G_2\equiv^J_k H_2$, then $G_1\times G_2\equiv^J_k H_1\times H_2$.
\end{lem}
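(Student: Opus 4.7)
The plan is to prove this via the pebble game characterization (Lemma~\ref{AlgoVSGame}). By assumption, Duplicator has winning strategies $\sigma_1$ and $\sigma_2$ in the $(k+1)$-pebble games (Version $J$) on $(G_1,H_1)$ and $(G_2,H_2)$ respectively, and it suffices to build a winning Duplicator strategy in the $(k+1)$-pebble game on $(G_1\times G_2, H_1\times H_2)$.

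The key idea is to project each configuration on the product groups to configurations on the two components and play the strategies $\sigma_1, \sigma_2$ in parallel. More precisely, I maintain the following invariant: if the current configuration on the product game is
\[
[((a_1,b_1),\dots,(a_{k+1},b_{k+1})),((c_1,d_1),\dots,(c_{k+1},d_{k+1}))]
\]
(reading $\perp$ in the product as $(\perp,\perp)$), then the projected configuration $[(a_1,\dots,a_{k+1}),(c_1,\dots,c_{k+1})]$ is reachable under $\sigma_1$ in the game on $(G_1,H_1)$, and likewise for the second projection and $\sigma_2$. When Spoiler picks up the $i$-th pebble pair on the product game, I mirror this in each component game, obtaining bijections $f_j\colon G_j\to H_j$ prescribed by $\sigma_j$. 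Duplicator then responds in the product game with the combined bijection $f\colon G_1\times G_2\to H_1\times H_2$ defined by $f(g_1,g_2):=(f_1(g_1),f_2(g_2))$. When Spoiler subsequently picks a target $(g_1,g_2)$, the invariant is preserved because the induced moves in each component game agree with $\sigma_1$ and $\sigma_2$.

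It remains to check that the winning condition for Spoiler is never met in the product game given that it is not met in the two component games. Multiplication, equality, and color relations among elements of $G_1\times G_2$ reduce to the conjunction of the corresponding relations on the components: $(a_i,b_i)=(a_j,b_j)$ iff $a_i=a_j$ and $b_i=b_j$, and $(a_i,b_i)(a_j,b_j)=(a_m,b_m)$ iff $a_ia_j=a_m$ and $b_ib_j=b_m$. This immediately handles the initial coloring check for Version $\versone$. For Version $\verstwo$, I observe that the kernel of the natural surjection $F_s \twoheadrightarrow \langle(a_1,b_1),\dots,(a_s,b_s)\rangle$ is exactly the intersection of the kernels corresponding to $\langle a_1,\dots,a_s\rangle$ and $\langle b_1,\dots,b_s\rangle$, so the ordered isomorphism type of the generated subgroup in the product is determined by the pair of ordered isomorphism types in the components. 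Hence matching ordered types in both components yields matching ordered type in the product.

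The argument is largely routine; the only subtle point—and what I would flag as the main conceptual step—is the Version $\verstwo$ check, where one has to be careful that even though $\langle(a_i,b_i):i\rangle$ is in general a proper subgroup of $\langle a_i:i\rangle\times\langle b_i:i\rangle$, its ordered isomorphism type is still a function of the two ordered component isomorphism types. Once this is verified, combining the invariant with Lemma~\ref{AlgoVSGame} yields that Duplicator wins the $(k+1)$-pebble game on $(G_1\times G_2, H_1\times H_2)$, which gives $G_1\times G_2\equiv^J_k H_1\times H_2$.
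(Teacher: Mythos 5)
Your proposal is correct and follows essentially the same route as the paper: Duplicator plays the two component games in parallel and answers with the componentwise product of the two prescribed bijections, the key observation being that the ordered isomorphism type (resp.\ the initial coloring) of a pebbled tuple in the product is determined by the pair of ordered isomorphism types (resp.\ initial colorings) of its projections. Your kernel-intersection justification of the Version~$\verstwo$ step is a correct and slightly more explicit account of the equivalence the paper simply asserts.
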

\begin{proof}
	Consider the $(k+1)$-pebble game (Version $J$) on $(G_1\times G_2,H_1\times H_2)$. 
	Assume Duplicator always chooses bijections componentwise,
	$f_1:G_1\to H_1$ and $f_2:G_2\to H_2$ say, and combines them to a move $f:(g,h)\mapsto(f_1(g_1),f_2(g_2))$.
	Given $k$-tuples $((g_{1,1},g_{2,1}),\dots,(g_{1,k},g_{2,k}))\in (G_1\times G_2)^{(k)}$ and
	$((h_{1,1},h_{2,1}),\dots,(h_{1,k},h_{2,k}))\in (H_1\times H_2)^{(k)}$, there is an isomorphism
	mapping $(g_{1,i},g_{2,i})$ to $(h_{1,i},h_{2,i})$ for all $i$ if and only if there is a componentwise isomorphism
	mapping $g_{1,i}$ to $h_{1,i}$ and a componentwise isomorphism mapping $g_{2,i}$ to $h_{2,i}$ for all $i$.
	In particular, Duplicator can choose $f_1$ and $f_2$ according to winning strategies 
	on $(G_1,H_1)$ and $(G_2,H_2)$ and obtain a winning strategy on the direct products.
\end{proof}

\subsection{Abelian and Semi-Abelian Case}
Direct products with abelian groups are easier to handle than the general case and serve as a basis for reduction later on.

\begin{dfn}
An element $x\in G$ \textbf{splits} from the group $G$ if there is a \textbf{complement} $H\leq G$ of $x$ in $G$, i.e., $G=\langle x\rangle\times H$.
\end{dfn} 

\begin{lem}\label{lem:SplitFromDirectAbelian}
	Let $A$ be a finite, abelian $p$-group and consider an arbitrary cyclic decomposition $A=A_1\times\dots\times A_m$. Then $a=(a_1,\dots,a_m)\in A$ splits from $A$ if and only if
	there is some $i$ with $|a|=|a_i|$ and $a_i\in A_i\setminus (A_i)^p$.
\end{lem}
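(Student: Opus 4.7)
The plan is to handle the two directions separately after the following observation: since $A_i$ is cyclic of prime power order, its unique maximal subgroup is $(A_i)^p = pA_i$, so $a_i \in A_i \setminus (A_i)^p$ is equivalent to $a_i$ being a generator of $A_i$, i.e.\ to $|a_i| = |A_i|$. Thus the statement to be proved becomes: a nonzero $a$ splits from $A$ if and only if there is an index $i$ with $|a_i| = |a| = |A_i|$.

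For the backward direction, assume such an index $i$ exists, and I claim that $B := \prod_{j \neq i} A_j$ is a complement of $\langle a \rangle$ in $A$. Any element of $\langle a\rangle \cap B$ has the form $ka$ with $\pi_i(ka) = ka_i = 0$ (where $\pi_i$ is the projection to $A_i$); since $a_i$ has order $|a|$, this forces $|a| \mid k$ and hence $ka = 0$. From $|\langle a\rangle|\cdot|B| = |A_i|\cdot|B| = |A|$ it then follows automatically that $A = \langle a\rangle \oplus B$.

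For the forward direction, the key tool is that if $\langle a \rangle$ is a direct summand of the abelian group $A$ then $\langle a\rangle$ is a pure subgroup, i.e.\ $\langle a\rangle \cap nA = n\langle a\rangle$ for every $n$ (verified directly by decomposing along $A = \langle a \rangle \oplus B'$). Writing $|a| = p^e$ with $e \geq 1$, purity at $n = p^e$ yields $\langle a\rangle \cap p^e A = p^e\langle a\rangle = 0$, and hence $p^{e-1}a$ is a nonzero element of $\langle a\rangle$ which does not lie in $p^e A$. Reading this componentwise, $p^{e-1}a = (p^{e-1}a_1,\dots,p^{e-1}a_m)$ fails to lie in $p^eA_1 \times \cdots \times p^e A_m$, so there exists an index $i$ with $p^{e-1}a_i \notin p^e A_i$. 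This single $i$ will simultaneously witness both required conditions: first, $p^{e-1}a_i \neq 0$ forces $|a_i| = p^e = |a|$; second, if $a_i$ were of the form $pb_i$ with $b_i \in A_i$ we would have $p^{e-1}a_i = p^e b_i \in p^e A_i$, a contradiction, so $a_i \in A_i \setminus (A_i)^p$.

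The main conceptual point is to invoke purity of $\langle a \rangle$ and translate it into a componentwise divisibility statement; the right level to pick is $n = p^e$, which simultaneously rules out $|a_i| < |a|$ (via the nonvanishing of $p^{e-1}a_i$) and $a_i \in (A_i)^p$ (via the failure of $p^{e-1}a_i \in p^e A_i$). Apart from this, all remaining work is routine bookkeeping about cyclic $p$-groups -- the identification of $(A_i)^p$ with $p A_i$, of generators with the complement of this Frattini subgroup, and of orders via $p$-adic valuations -- so I do not foresee significant obstacles.
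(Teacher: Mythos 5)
Your proof is correct. The backward direction is the same as the paper's: both take the complement $\prod_{j\neq i}A_j$ and check that it meets $\langle a\rangle$ trivially. The forward direction, however, is genuinely different. The paper works directly with a given complement $B$: it first normalizes $a$ so that every component is either trivial or of full order $|a|$, then argues that some generator $e_i$ of a cyclic factor with $|a_i|=|a|$ must satisfy $e_i^{|e_i|/p}\notin B$, whence $A=\langle e_i\rangle\times B$ and $|e_i|=|a|$. You instead invoke purity of direct summands, $\langle a\rangle\cap nA=n\langle a\rangle$, specialized to $n=p^e$ with $|a|=p^e$; reading $p^{e-1}a\notin p^eA$ componentwise produces in one stroke an index $i$ witnessing both $|a_i|=|a|$ and $a_i\notin (A_i)^p$. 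Your argument is arguably cleaner, as it avoids the paper's normalization step (whose justification that $A=\langle ax\rangle\times B$ for all $|x|<|a|$ requires its own small verification) and rests on a single standard fact; the paper's version is more elementary in that it never leaves the language of explicit generators and complements. One minor point in your favor: you explicitly restrict to $a\neq 1$, which is the correct reading (the identity "splits" trivially but never satisfies the stated condition unless some $A_i$ is trivial), and is consistent with how the lemma is applied in Corollary~\ref{cor:SplitFromDirectAbelian}.
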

\begin{proof}
	First assume that $|a|=|a_i|$ and $a_i\in A_i\setminus (A_i)^p$ for some $i$.
	Then $A_i=\langle a_i\rangle$ and $A=\langle a\rangle\times \langle \{e_j: j\in \{1,\ldots,m \} \setminus \{i\}\}\rangle$, where $e_j$ is a generator of $\{1\}\times\dots\times\{1\}\times A_j\times \{1\}\times\dots\times\{1\}$. For the other direction, assume that 
	$A=\langle a\rangle\times B$ holds for some subgroup $B\leq A$. Then it also holds that
	$A=\langle a'\rangle\times B$ for every element $a'=ax$ with $|x|<|a|$. So we may assume
	for all $i$ that either $|a_i|=|a|$ or $a_i=1$ holds. If $e_i^{\frac{|e_i|}{p}}\in B$ holds for all $i$
	with $|a_i|=|a|$, then $a^{\frac{|a|}{p}}\in B$ which is a contradiction to $\langle a\rangle\cap B=\{1\}$. So there is some
	$i$ with $|a_i|=|a|$ and $e_i^{\frac{|e_i|}{p}}\not\in B$.
	Hence, $\langle e_i\rangle\cap B=\{1\}$ and then $A=\langle e_i\rangle\times B$
	since $|e_i|\geq |a|$. This finally implies $|a_i|=|a|=|e_i|$ and so $a_i\in A_i\setminus (A_i)^p$.
\end{proof}

\begin{cor}\label{cor:SplitFromDirectAbelian}
	Let $A$ be a finite, abelian $p$-group and $x\in A$. Then $1\neq x$ splits from $A$ if and only if there is no $y\in A$ such that $|xy^p|<|x|$. Moreover, $2$-WL$_{{\verstwo}}$ detects the set of all elements that split from $A$.
	\end{cor}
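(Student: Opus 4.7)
My plan is to prove the logical equivalence via Lemma~\ref{lem:SplitFromDirectAbelian} and then establish detectability by arguing that the predicate $|xy^p|<|x|$ is already visible in the initial $2$-WL$_\verstwo$ color of the pair $(x,y)$.

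For the equivalence, I fix an arbitrary cyclic decomposition $A=A_1\times\dots\times A_m$ and write $x=(x_1,\dots,x_m)$. In a cyclic $p$-group the complement of the $p$-th powers is exactly the set of generators, so $x_i\notin (A_i)^p$ forces $|x_i|=|A_i|$. If $x$ splits, Lemma~\ref{lem:SplitFromDirectAbelian} furnishes an index $i_0$ with $|x_{i_0}|=|x|$ and $x_{i_0}\notin (A_{i_0})^p$; for any $y\in A$, $y_{i_0}^p\in (A_{i_0})^p$, so $x_{i_0}y_{i_0}^p$ remains a generator of $A_{i_0}$ and $|xy^p|\geq |x_{i_0}y_{i_0}^p|=|A_{i_0}|=|x|$. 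Conversely, if $x$ does not split, then on every index $i$ with $|x_i|=|x|$ one can write $x_i=w_i^p$; setting $y_i=w_i^{-1}$ on these indices and $y_i=1$ elsewhere produces an element with $xy^p$ supported only on coordinates with $|x_i|<|x|$, giving $|xy^p|<|x|$.

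For detectability, the key observation is that the initial $2$-WL$_\verstwo$ color of $(x,y)$ records the ordered isomorphism type of $\langle x,y\rangle$ with marked generators, and since element orders in a subgroup coincide with element orders in the ambient group, this initial color determines both $|x|$ and $|xy^p|$. Given colored abelian $p$-groups $(G,\gamma_G),(H,\gamma_H)$, suppose $g\in G$ is a splitter but $h\in H$ is not (or vice versa). If $|g|\neq |h|$, Lemma~\ref{lem:order_and_powers} already separates them. Otherwise $|g|=|h|$, and one refinement step assigns to $(g,1)$ (and hence to $g$ via the induced $1$-coloring) the multiset $\{(\chi^{\verstwo,2}_{\gamma_G,0}(x,1),\,\chi^{\verstwo,2}_{\gamma_G,0}(g,x)):x\in G\}$. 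Because $g$ splits, no second coordinate in this multiset corresponds to a pair with $|gx^p|<|g|$; but the analogous multiset at $h$ must contain such a color, witnessed by any $z\in H$ with $|hz^p|<|h|=|g|$. Hence the two multisets differ, so $g$ and $h$ receive different $2$-WL$_\verstwo$ colors.

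The main obstacle I foresee is precisely this last step: the condition $|xy^p|<|x|$ is not a word equation in the variables, so Lemma~\ref{LemGroupExpression} does not apply directly. The resolution is to exploit that ordered isomorphism types of pairs carry strictly more information than any conjunction of word equations; consequently, a single refinement round of $2$-WL$_\verstwo$ suffices to turn the existential quantifier over $y$ into a multiset comparison across the two groups.
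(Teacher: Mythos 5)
Your proof is correct, but the detectability half takes a genuinely different route from the paper's. For the equivalence you spell out, via the cyclic decomposition of Lemma~\ref{lem:SplitFromDirectAbelian}, an argument the paper dismisses as ``a restatement of the previous lemma''; your two directions are both sound (the forward one because $x_{i_0}y_{i_0}^p$ stays outside the subgroup $(A_{i_0})^p$ and hence remains a generator of $A_{i_0}$). For detectability the paper does \emph{not} hit the obstacle you describe: once $|x|$ is fixed (and initial colors already separate elements by order), the condition $|xy^p|<|x|$ \emph{is} the word equation $(xy^p)^{|x|/p}=1$, and the set $\{a^p\mid a\in A\}$ is a detectable subset selector by Lemma~\ref{lem:basicClosure}; the paper then simply invokes Lemma~\ref{LemGroupExpression} with $t=2$ to get $Sol^{\forall}_1$ detectable. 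Your workaround --- unfolding one round of $2$-WL$_\verstwo$ refinement on the tuple $(g,1)$ and comparing the multisets of initial colors $\chi^{\verstwo,2}_{\gamma,0}(g,x)$, which determine $|g|$ and $|gx^p|$ as orders of words in the marked generators --- is a valid, self-contained substitute; it essentially re-proves the $t=2$, $j=1$, universally quantified case of Lemma~\ref{LemGroupExpression} by hand. What the paper's route buys is brevity and reuse of general machinery; what yours buys is independence from that machinery and an explicit view of where in the refinement the separation occurs. The only caveat is the mild one that Lemma~\ref{lem:order_and_powers} is stated for two elements of the same group, but the cross-group version you need follows by the identical argument from the initial coloring.
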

\begin{proof}
	The first part is a restatement of the previous lemma. Regarding the $2$-WL$_\verstwo$-detectability, consider that by Lemma~\ref{lem:basicClosure}, $\{a^p\mid a\in A\}\leq A$ is detectable and $x$ does not split from $A$ if and only if for all $a\in A$ and it holds
	$(xa^p)^{|x|/p}\neq 1$. Thus, the claim follows from Lemma~\ref{LemGroupExpression}.
\end{proof}

\begin{lem}\label{LemDetectSplittingAbelian}
	Let $A$ be a finite abelian group and $A=P_1\times\dots\times P_m$ the decomposition of $A$ into Sylow-subgroups. Then $1\neq x=(x_1,\dots,x_m)$ splits from $A$ if and only if each $x_i$ is either trivial or splits from $P_i$. 
	In particular, $2$-WL$_{{\verstwo}}$ detects the set of elements that split from an abelian group.
\end{lem}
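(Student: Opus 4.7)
The plan is to first establish the characterization by reducing to Sylow components, then derive $2$-WL$_\verstwo$-detectability from it using Corollary~\ref{cor:SplitFromDirectAbelian}. For the backward direction, suppose each $x_i$ is either $1$ or splits from $P_i$. I would pick complements $B_i \leq P_i$ with $P_i = \langle x_i\rangle \times B_i$ (setting $B_i := P_i$ when $x_i = 1$). Since the $x_i$ commute pairwise and have pairwise coprime orders, the Chinese Remainder Theorem gives $\prod_i \langle x_i\rangle = \langle x_1 \cdots x_m\rangle = \langle x\rangle$, so
\[
	A = \prod_{i=1}^{m} P_i = \prod_{i=1}^{m} (\langle x_i\rangle \times B_i) = \langle x\rangle \times \prod_{i=1}^{m} B_i,
\]
which witnesses that $x$ splits.

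Conversely, if $A = \langle x\rangle \times B$, set $B_i := B \cap P_i$. Since $B$ is abelian with Sylow decomposition $B = \prod_i B_i$, and $\langle x\rangle$ has Sylow decomposition $\prod_i \langle x_i\rangle$ (again by the Chinese Remainder Theorem), matching the $p_i$-primary components in $A = \langle x\rangle \times B$ yields $P_i = \langle x_i\rangle \times B_i$. Hence each nontrivial $x_i$ splits from $P_i$, finishing the characterization.

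For the detectability claim, each Sylow subgroup $P_p \leq A$ is $2$-WL$_\verstwo$-detectable as the image of the power map $a \mapsto a^{|A|/|P_p|}$ (Lemma~\ref{lem:basicClosure}), and the $p$-component $x^{e_p}$ of $x$ is a specific integer power determined by $|A|$ via the Chinese Remainder Theorem. Combining Corollary~\ref{cor:SplitFromDirectAbelian} applied inside each $P_p$ with the group-expression framework of Lemma~\ref{LemGroupExpression} shows that $\{x \in A : x^{e_p} = 1 \text{ or } x^{e_p} \text{ splits from } P_p\}$ is $2$-WL$_\verstwo$-detectable for each prime $p \mid |A|$; intersecting over $p$ then gives the detectable set of all splitting elements. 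The main subtlety is the uniformity of the subset selector across colored groups of different orders, but this is handled by Corollary~\ref{cor:abelian-identified}: since $2$-WL$_\verstwo$ already identifies finite abelian groups, the prime factorization of $|A|$ and the exponents $e_p$ are implicitly fixed by the stable coloring.
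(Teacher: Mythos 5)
Your proof is correct and follows essentially the same route as the paper: reduce to the Sylow components via the Chinese Remainder Theorem for the characterization, and obtain detectability by projecting onto each $P_i$ with an integer power map (the paper uses $x\mapsto x^{|A|/|P_i|}$ where you use $x\mapsto x^{e_p}$, an immaterial difference) combined with Corollary~\ref{cor:SplitFromDirectAbelian} and Lemma~\ref{lem:basicClosure}. You merely spell out the two directions of the equivalence and the uniformity issue in more detail than the paper does.
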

\begin{proof}
	By the Chinese Remainder Theorem we have $\langle x\rangle\cong\langle x_1\rangle\times\dots\times\langle x_m\rangle$. Regarding the detactability note that $x_i$ splits with respect to $P_i$ if and only if $x^{|A|/|P_i|}$ splits in $P_i$, so the claim follows from the previous lemma and Lemma~\ref{lem:basicClosure}. 
\end{proof}

\begin{lem}\label{LemCharacterizeSplitting}
	Let $G$ be a finite group and $z\in Z(G)$. Then $z$ splits from $G$ if and only if $zG'$ splits from $G/G'$ and $\langle z\rangle\cap G'=\{1\}$. 
\end{lem}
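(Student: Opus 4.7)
My plan is to prove both implications directly, without appealing to the detectability machinery, since this is a purely group-theoretic statement about central elements.

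For the forward direction, I will assume $G=\langle z\rangle\times H$ for some subgroup $H\leq G$ and extract both stated conclusions. The key observation is that since $z\in Z(G)$ is central, every commutator of $G$ already lies in $H$; more precisely, writing an arbitrary element of $G$ as $z^iu$ with $u\in H$, the identity $[z^iu,z^jv]=[u,v]$ gives $G'=H'\leq H$. Then $\langle z\rangle\cap G'\leq \langle z\rangle\cap H=\{1\}$, establishing the second condition. For the splitting in the abelianization, I use $G'=H'$ to observe $G/G'=(\langle z\rangle\times H)/H'\cong \langle z\rangle\times(H/H')$ naturally (since $\langle z\rangle\cap H'=\{1\}$), with $zG'$ generating the first factor and $H/G'$ providing a complement.

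For the backward direction, I assume both conditions and lift a complement from $G/G'$ to $G$. By hypothesis there is a subgroup $G'\leq K\leq G$ such that $G/G'=\langle zG'\rangle\times(K/G')$; I will check that $K$ itself is a complement of $\langle z\rangle$ in $G$. Normality of $K$ in $G$ is automatic because $K/G'$ is a subgroup of the abelian group $G/G'$ and $K\supseteq G'$. The product $\langle z\rangle K=G$ follows from $G=\langle z\rangle K G'=\langle z\rangle K$ via $G'\leq K$. The crucial intersection computation is $\langle z\rangle\cap K$: the quotient identity $\langle zG'\rangle\cap K/G'=\{G'\}$ yields $\langle z\rangle\cap K\subseteq G'$, so combining with the second hypothesis $\langle z\rangle\cap G'=\{1\}$ gives $\langle z\rangle\cap K=\{1\}$. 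Since both $\langle z\rangle$ and $K$ are normal with trivial intersection and their product is $G$, we conclude $G=\langle z\rangle\times K$, so $z$ splits.

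The main obstacle here is a subtle one: one might worry that $\langle zG'\rangle$ inside $G/G'$ is smaller than the image of $\langle z\rangle$, but of course these coincide since taking the generated cyclic subgroup commutes with quotienting. A second subtle point is that I need the intersection $\langle z\rangle\cap K$ to be trivial rather than merely contained in $G'$; this is precisely why the hypothesis $\langle z\rangle\cap G'=\{1\}$ is indispensable, and an example like $G=C_{p^2}$ with $z$ a generator and $G'=\{1\}$ (where $zG'$ trivially splits but $z$ does not decompose further) shows that the condition is necessary even in the abelian setting when $\langle z\rangle$ itself can intersect a would-be complement nontrivially through $G'$. No further calculation is required beyond verifying these set-theoretic identities.
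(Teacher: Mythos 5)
Your proof is correct and takes essentially the same route as the paper: for the reverse direction both arguments lift a complement of $\langle zG'\rangle$ to a subgroup $K\leq G$ containing $G'$ (the paper's $\hat K$) and combine $\langle zG'\rangle\cap K/G'=\{G'\}$ with $\langle z\rangle\cap G'=\{1\}$ to get $\langle z\rangle\cap K=\{1\}$, and for the forward direction both use $G'=H'$ to split the abelianization. One small caveat on your closing aside: the $C_{p^2}$ example does not witness necessity of the hypothesis, since there $G'=\{1\}$ and both sides of the equivalence hold (a generator splits with trivial complement); a genuine witness is $z=-1$ in $Q_8$, where $zG'$ splits from $G/G'\cong C_2\times C_2$ but $\langle z\rangle\cap G'\neq\{1\}$ and $z$ does not split.
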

\begin{proof}
	Assume that $G/G'=\langle zG'\rangle\times K$. For a generating set $(k_1G',\dots,k_mG')$ of $K$, let 
	$\hat{K}:=\langle k_1,\dots,k_m\rangle G'\leq G$. Then $G=\hat{K}\langle z\rangle$, and $\langle zG'\rangle\cap K=\{1\}$ together with $\langle z\rangle\cap G'=\{1\}$ implies $\langle z\rangle\cap\hat{K}=\{1\}$. Thus $z$ splits with complement $\hat{K}$. On the other hand, if $G=\langle z\rangle\times H$ then $G'=H'$ so $G/G'=\langle zH'\rangle \times H/H'$.	
\end{proof}

\begin{cor}\label{DetectSplittingElements}
	The set of elements splitting from a finite group $G$ is $4$-WL$_{{\versone}}$-detectable.
\end{cor}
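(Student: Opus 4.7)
The strategy is to invoke Lemma~\ref{LemCharacterizeSplitting}, which characterises splitting as the conjunction of three conditions: $z\in Z(G)$, $zG'$ splits from $G/G'$, and $\langle z\rangle\cap G'=\{1\}$. Since intersections of $4$-WL$_\versone$-detectable subset selectors are again detectable, it is enough to show that each of these three conditions cuts out such a selector.

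The first condition is handled by Lemma~\ref{LemCenterDetectable} combined with Lemma~\ref{CompareVersions}, which yield $4$-WL$_\versone$-detectability of the center. For the second condition, I plan to apply Theorem~\ref{MainThm1}(1) with $N\colon G\mapsto G'$ ($4$-WL$_\versone$-detectable by Corollary~\ref{cor:derived_detectable}) and $U/N$ the splitting-element selector on abelian groups (detectable by Lemma~\ref{LemDetectSplittingAbelian}, upgraded to Version~$\versone$). The hypothesis $N\leq U$ is met because the identity always splits from any abelian group, so $G'$ lies in the preimage $\{z\in G : zG'\text{ splits from }G/G'\}$; hence this preimage is $4$-WL$_\versone$-detectable.

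The main obstacle is the third condition, as $\langle z\rangle\cap G'=\{1\}$ is not directly a group expression because of the implicit quantification over powers of $z$. My plan is to pass to orders: the condition is equivalent to $z^{|z|/p}\notin G'$ for every prime $p$ dividing $|z|$, so it can be decided once $|z|$ is known. By Lemma~\ref{lem:order_and_powers} the subset selector $\{z\in G:|z|=e\}$ is $2$-WL$_\verstwo$-detectable for every $e$, which lets me case-split on the value $e$ of $|z|$. For each divisor $e$ of $|G|$ and each prime $p\mid e$, Lemma~\ref{LemGroupExpression} applied to the expression $(Id,G';\{x_1^{e/p}x_2^{-1}\})$ shows that $\{z\in G:z^{e/p}\in G'\}$ is $4$-WL$_\versone$-detectable. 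Taking finite intersections and complements for each fixed $e$, and then a union over the finitely many divisors of $|G|$, yields a detectable selector equal to $\{z\in G:\langle z\rangle\cap G'=\{1\}\}$. Intersecting with the sets detected in the first two steps delivers the set of splitting elements.
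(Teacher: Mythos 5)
Your proposal is correct and follows essentially the same route as the paper: reduce via Lemma~\ref{LemCharacterizeSplitting} to the three conditions, detect $Z(G)$, $G'$ and hence $\langle z\rangle\cap G'=\{1\}$ by Version~\verstwo{} detectability plus Lemma~\ref{CompareVersions}, and lift the abelian splitting criterion from $G/G'$ to $G$ via Theorem~\ref{MainThm1}(1). Your treatment of the condition $\langle z\rangle\cap G'=\{1\}$ (case-splitting on $|z|$ and using group expressions $z^{e/p}\in G'$) is merely a more explicit version of what the paper dispatches with ``thus also''; no gap.
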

\begin{proof}
	First note that
	$3$-WL$_\verstwo$ detects both $G'$ and $Z(G)$ (Corollary~\ref{cor:derived_detectable} and Lemma~\ref{LemCenterDetectable}) and thus also the set of central elements $z$ with $\langle z\rangle\cap G'=\{1\}$. Furthermore, $G/G'$ is abelian and elements splitting from $G/G'$ can be detected with $2$-WL$_\verstwo$.
	By Lemma~\ref{MainThm1} a), this information can be lifted to $G$ by $k$-WL$_\versone$ for $k\geq 4$, i.e.,
	$4$-WL$_\versone$ detects the set of elements $z\in G$ such that $zG'$ splits from $G'$ as well as
	central elements $z\in G$ with $\langle z\rangle\cap G'=\{1\}$.
\end{proof}

We analyze the splitting of elements in two special instances.

\begin{lem}\label{SplittingInSubgroup}
	Consider groups $U\leq G$ and $x\in Z(G)\cap U$. If $x$ splits from $G$ then $x$ splits from $U$.
\end{lem}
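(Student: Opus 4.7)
The plan is to construct the complement of $\langle x\rangle$ in $U$ directly by intersecting a complement of $\langle x\rangle$ in $G$ with $U$. More precisely, let $H\leq G$ be such that $G=\langle x\rangle\times H$, which exists by hypothesis, and define $V:=U\cap H$. I claim $U=\langle x\rangle\times V$.

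First I would check that $V$ is a subgroup of $U$ (immediate as the intersection of two subgroups) and that $\langle x\rangle\cap V=\{1\}$, which follows from $\langle x\rangle\cap H=\{1\}$. Next, to establish $U=\langle x\rangle V$, I would take an arbitrary element $u\in U\leq G$ and write it uniquely as $u=x^i h$ with $h\in H$. The key observation is that $h=x^{-i}u$, and since both $x$ and $u$ lie in $U$, this forces $h\in U$; hence $h\in U\cap H=V$ and $u\in\langle x\rangle V$.

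It remains to verify that both factors are normal in $U$, so that the internal direct product really decomposes $U$. The factor $\langle x\rangle$ is central in $G$ by assumption, hence central in $U$. For $V$, note that any $u\in U$ can be written as $u=x^j v'$ with $v'\in V$, and conjugation by $x^j$ is trivial; therefore $uVu^{-1}=v'Vv'^{-1}=V$, since $V$ is itself a subgroup containing $v'$.

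There is essentially no obstacle here: the only thing to notice is that centrality of $x$ in $G$ is what lets the calculation $h=x^{-i}u\in U$ go through cleanly (and it is also why $V$ ends up normal in $U$). Thus the hypothesis $x\in Z(G)$ is used in an essential way, even though the argument is otherwise a routine exercise on splittings of central elements.
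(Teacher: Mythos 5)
Your proof is correct and follows essentially the same route as the paper: both fix a complement $H$ (the paper calls it $K$) of $\langle x\rangle$ in $G$ and use $x\in U$ to see that the $H$-components of elements of $U$ already lie in $U$, so that $U\cap H$ (equivalently, the subgroup generated by the $K$-parts of a generating set of $U$, as in the paper) is a complement of $\langle x\rangle$ in $U$. One small correction to your closing remark: the step $h=x^{-i}u\in U$ uses only $x\in U$, not centrality; the hypothesis $x\in Z(G)$ (or merely $\langle x\rangle\trianglelefteq G$, which already follows from $x$ splitting) is what gives normality of $\langle x\rangle$ in $U$ and hence directness of the product.
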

\begin{proof}
	If $x$ splits from $G$ we can write $G=\langle x\rangle\times K$ for a suitable complement
	$K\leq G$ of $x$ in $G$. Then $U:=\langle x^{m_1}k_1,\dots,x^{m_t}k_t\rangle$ where
	$k_1,\dots,k_t\in K$ and since $x\in U$ we obtain $U=\langle x,k_1,\dots,k_t\rangle=
	\langle x\rangle\times \langle k_1,\dots,k_m\rangle$.
\end{proof}

\begin{lem}\label{lem:splitting_direct_componentwise}
	Consider a direct product $G=G_1\times G_2$ and a $p$-element $z:=(z_1,z_2)\in Z(G)$.
	Then $z$ splits from $G$ if and only if $z_i$ splits from $G_i$ for some $i\in\{1,2\}$ which fulfills $|z_i|=|z|$.
\end{lem}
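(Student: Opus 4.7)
I would split the argument into the two directions of the equivalence.

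For the easy direction ($\Leftarrow$), assume without loss of generality that $z_1$ splits from $G_1$ with $|z_1|=|z|$, so there is a complement $K_1\leq G_1$ with $G_1=\langle z_1\rangle\times K_1$. Setting $H:=K_1\times G_2$, I would verify directly that $G=\langle z\rangle\times H$: the set $H$ is normal in $G$ (it is a direct factor of the ambient $G_1\times G_2$), it commutes elementwise with $\langle z\rangle$ (which is central), and $z=z_1\cdot (1,z_2)$ with $(1,z_2)\in H$ gives $G=\langle z\rangle H$. The only thing to check is that $\langle z\rangle\cap H=\{1\}$: if $z^k=(z_1^k,z_2^k)\in H$, then $z_1^k\in K_1\cap\langle z_1\rangle=\{1\}$, so $|z_1|\mid k$; since $|z_1|=|z|$ this forces $z^k=1$. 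A cardinality check confirms that the product is all of $G$.

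For the harder direction ($\Rightarrow$), suppose $z=(z_1,z_2)$ splits from $G$. My plan is to pass to the abelianization to reduce to the abelian case treated in Lemmas~\ref{lem:SplitFromDirectAbelian} and~\ref{LemDetectSplittingAbelian}. By Lemma~\ref{LemCharacterizeSplitting}, $zG'$ splits from $A:=G/G'$ and $\langle z\rangle\cap G'=\{1\}$; the latter implies $|\bar z|=|z|$, where $\bar z:=zG'$. Using $G'=G_1'\times G_2'$, I identify $A$ with $A_1\times A_2$ where $A_i=G_i/G_i'$, and $\bar z=(\bar z_1,\bar z_2)$ with $\bar z_i=z_iG_i'$. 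Since $z$ is a $p$-element, so is $\bar z$, so by Lemma~\ref{LemDetectSplittingAbelian} the splitting of $\bar z$ from $A$ is equivalent to the splitting of $\bar z$ from the Sylow $p$-subgroup $P=P_1\times P_2$ of $A$.

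Next, I would invoke Lemma~\ref{lem:SplitFromDirectAbelian}: combining any cyclic decompositions of $P_1$ and of $P_2$ yields a cyclic decomposition of $P$, and the lemma produces a coordinate (necessarily belonging to one of the $P_i$, say $P_1$) where the corresponding component of $\bar z$ has order $|\bar z|$ and is not a $p$-th power. Applied inside $P_1$, this very same coordinate then witnesses, via Lemma~\ref{lem:SplitFromDirectAbelian} again, that $\bar z_1$ splits from $P_1$ and that $|\bar z_1|=|\bar z|=|z|$. Lifting back through Lemma~\ref{LemDetectSplittingAbelian}, $\bar z_1$ splits from the abelian group $A_1=G_1/G_1'$. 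Finally, to apply Lemma~\ref{LemCharacterizeSplitting} to $z_1\in G_1$, I need $\langle z_1\rangle\cap G_1'=\{1\}$; this follows from $|\bar z_1|=|z_1|$, which holds because $|z_1|\geq|\bar z_1|=|z|\geq|z_1|$ (the last inequality using that $z$ is a $p$-element and $|z|=\max(|z_1|,|z_2|)$). Thus $z_1$ splits from $G_1$ with $|z_1|=|z|$, completing the equivalence.

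The only nontrivial bookkeeping, and thus the step I expect to require the most care, is tracking the element orders through the two reductions (first to $G/G'$, then to the Sylow $p$-subgroup of this abelianization): one must argue not merely that some $\bar z_i$ inherits the splitting property, but that its order coincides with $|z|$, and that this upgrades back to $|z_i|=|z|$ in the original group $G_i$. All three equalities rest crucially on $z$ being a $p$-element together with the conclusion $\langle z\rangle\cap G'=\{1\}$ of Lemma~\ref{LemCharacterizeSplitting}.
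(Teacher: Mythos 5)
Your proof is correct, but the forward direction takes a genuinely different route from the paper's. The paper argues directly on the group: given a complement $B$ with $G=\langle z\rangle\times B$ and (WLOG) $|z_1|=|z|$, it distinguishes the cases $\langle (z_1,1)\rangle\cap B=\{1\}$ (whence $(z_1,1)$ splits from $G$ and hence, by Lemma~\ref{SplittingInSubgroup} applied to $U=G_1\times\{1\}$, $z_1$ splits from $G_1$) and $(z_1^m,1)\in B$ for some $z_1^m\neq 1$ (whence one argues $|z_2|=|z|$ and $\langle(1,z_2)\rangle\cap B=\{1\}$). You instead push everything to the abelianization via Lemma~\ref{LemCharacterizeSplitting}, use $G'=G_1'\times G_2'$ to split $G/G'$ as $A_1\times A_2$, and then apply Lemmas~\ref{LemDetectSplittingAbelian} and~\ref{lem:SplitFromDirectAbelian} to a combined cyclic decomposition of the Sylow $p$-subgroup; the witnessing cyclic coordinate lands in one $P_i$ and hands the splitting back to $A_i$, after which Lemma~\ref{LemCharacterizeSplitting} lifts it to $G_i$. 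Your order bookkeeping ($|z_i|\geq|\bar z_i|=|\bar z|=|z|\geq|z_i|$, resting on $\langle z\rangle\cap G'=\{1\}$ and the $p$-element hypothesis) is exactly the right chain and closes the argument. The trade-off: the paper's proof is shorter and self-contained modulo Lemma~\ref{SplittingInSubgroup} but hides a slightly delicate case analysis on the complement $B$; yours invokes more of the preceding machinery but makes the role of the $p$-element hypothesis and of the condition $|z_i|=|z|$ completely transparent. Your backward direction is essentially identical to the paper's.
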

\begin{proof}
	Since $z$ is a $p$-element, so is $z_1\in G_1$ and $z_2\in G_2$.
	First assume that $G=\langle z\rangle\times B$ for some suitable $B\leq G$.
	Without loss of generality assume that $|z_1|=|z|$. If $(z_1,1)\cap B=\{1\}$ then $G=\langle (z_1,1)\rangle\times B$, so $z_1$ splits from $G_1$ by the previous lemma.
	Otherwise $(z_1^m,1)\in B$ for some $m$ such that $z_1^m\neq 1$. By assumption
	it holds that $z^m\notin B$, thus it must be the case that $|z_2|=|z|$ and $\langle(1,z_2)\rangle\cap B=\{1\}$, so $z_2$ splits from $G_2$.
	
	For the other direction, if $G_i$ admits a decomposition $G_i=\langle z_i\rangle\times B_i$ for some $i\in\{1,2\}$ 
	with $|z_i|=|z|$, then $G=\langle (z_1,z_2)\rangle \times \left(B_i\times G_{i+1\mod 2}\right)$.
\end{proof}

Let us move on to the semi-abelian case, by which we mean groups of the form $H\times A$ where $A$ is abelian and $H$ does not have abelian direct factors.
\begin{lem}\label{WLSplitMAximalAbelian}
	Let $G=H\times A$ with $A$ a maximal abelian direct factor. Then the isomorphism type of $A$ is identified by $4$-WL$_{{\versone}}$, i.e., if $\tilde{G}\equiv^\versone_4 G$ then $\tilde{G}$ has a maximal abelian direct factor isomorphic to $A$.
\end{lem}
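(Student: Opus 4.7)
The proof rests on Corollary~\ref{DetectSplittingElements}, which states that the set $\text{Spl}(G)$ of elements splitting from $G$ is $4$-WL$_\versone$-detectable. Given $\tilde G$ with $G\equiv^\versone_4\tilde G$, Theorem~\ref{MainThm1}(b) yields a color-preserving bijection $\Psi\colon G\to\tilde G$ that restricts to a bijection $\text{Spl}(G)\to\text{Spl}(\tilde G)$ and preserves element orders by Lemma~\ref{lem:order_and_powers}. Write $\tilde G=\tilde H\times\tilde A$ with $\tilde A$ a maximal abelian direct factor of the finite group $\tilde G$; the goal is to prove $\tilde A\cong A$.

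The key structural observation combines Lemma~\ref{lem:splitting_direct_componentwise} with the maximality of $A$. Since $H$ has no abelian direct factor, no nontrivial central element of $H$ splits from $H$, so a $p$-element $z=(z_H,z_A)\in Z(G)=Z(H)\times A$ splits from $G$ if and only if $z_A$ splits from $A_p$ and $|z_A|\ge|z_H|$. In particular the exponent of $A_p$ equals the maximum order of a splitting $p$-element in $G$, which is a WL-invariant quantity; the analogous identity holds in $\tilde G$, so the exponents of $A_p$ and $\tilde A_p$ agree for every prime $p$.

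I plan to prove the lemma by induction on $|A|$. The base case $A=1$ is immediate: $G$ then has no nontrivial splitting element, hence neither does $\tilde G$ via $\Psi$, forcing $\tilde A=1$. For the inductive step fix a prime $p$ dividing $|A|$ and let $p^m$ be the maximum order of a splitting $p$-element of $G$; the previous paragraph yields $A_p\cong C_{p^m}\times A_p'$ with $|A_p'|<|A_p|$. For any splitting $z\in Z(G)$ of order $p^m$ the subgroup $\langle z\rangle\cong C_{p^m}$ is a cyclic direct factor of $G$, and the Krull--Schmidt theorem (Lemma~\ref{lem:uniquenes:of:direct:factors}) pins down the isomorphism type of both $G/\langle z\rangle$ and of its maximal abelian direct factor $A/C_{p^m}$ independently of the choice of $z$. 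Passing to the colored quotient $(G/\langle z\rangle,\bar\gamma)$ via Lemma~\ref{QuotientColoringLem} and applying the inductive hypothesis then yields $A/C_{p^m}\cong\tilde A/C_{p^m}$, whence $A\cong\tilde A$.

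The main obstacle is executing the inductive step without losing WL-dimension: pebbling $z$ and $\Psi(z)$ and invoking Corollary~\ref{CorNormalClosuresWL} naively drops the effective dimension from $4$ to $3$. I would address this by formulating the inductive statement for colored groups, so that the element singled out by pebbling is absorbed into the coloring rather than into the pebble budget, and then verifying via Lemma~\ref{QuotientColoringLem} and Theorem~\ref{MainThm1} that the colored quotients $(G/\langle z\rangle,\bar\gamma)$ and $(\tilde G/\langle\Psi(z)\rangle,\bar{\tilde\gamma})$ remain indistinguishable at $4$-WL$_\versone$. As an alternative that sidesteps this dimension-accounting, one may bypass the induction entirely and derive the Ulm invariants of each Sylow $A_p$ directly from the WL-invariant distribution of orders of splitting central elements and of the torsion subgroups of $Z(G)$, exploiting the multiplicative formula for splitting $p$-element counts implied by the structural observation of the second paragraph.
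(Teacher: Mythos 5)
Your preliminary observations are all correct: the splitting criterion for central $p$-elements of $G=H\times A$ when $H$ has no abelian direct factor, the WL-invariance of the set of splitting elements via Corollary~\ref{DetectSplittingElements}, the identification of $\exp(A_p)$ with the maximal order of a splitting $p$-element, and the base case $A=1$. The problem is the inductive step you actually plan to carry out. To pass to the quotients $G/\langle z\rangle$ and $\tilde G/\langle\Psi(z)\rangle$ via Lemma~\ref{QuotientColoringLem} you must make $\langle z\rangle$ and $\langle\Psi(z)\rangle$ color-induced, i.e.\ individualize $z$ and $\Psi(z)$; for a generic splitting element (say $A\cong C_p\times C_p$) the subgroup $\langle z\rangle$ is not a union of stable color classes, so this step is unavoidable. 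But individualization costs exactly one pebble pair: from $G\equiv^\versone_4\tilde G$ and $\chi^{\versone,4}_G(z)=\chi^{\versone,4}_{\tilde G}(\Psi(z))$ you only get, via Lemma~\ref{AlgoVSGame}, that Duplicator survives the $5$-pebble game with one pair permanently parked on $(z,\Psi(z))$, which amounts to the $4$-pebble game on the individualized colored groups, i.e.\ $(G,\gamma_z)\equiv^\versone_3(\tilde G,\gamma_{\Psi(z)})$ and not $\equiv^\versone_4$. Reformulating the induction ``for colored groups'' does not change this: absorbing $z$ into the coloring is exactly as expensive as pebbling it, so each peeled-off cyclic factor loses one dimension and the induction collapses after at most two steps, while $A$ may have arbitrarily many cyclic factors. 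The verification you hope to perform --- that the colored quotients ``remain indistinguishable at $4$-WL$_\versone$'' --- cannot succeed in general.

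Your one-sentence fallback, recovering the Ulm invariants of each $A_p$ directly from the isomorphism type of $Z(G)$ together with the counts of splitting central elements of each order, is the correct route and is exactly what the paper does. Concretely: $Z(\tilde G)\cong Z(G)$ by Lemma~\ref{LemCenterDetectable}; fixing a homogeneous decomposition $Z(G)_p=Z_1\times\dots\times Z_m$ with $Z_i\cong C_{p^i}^{e_i}$ compatible with $Z(G)=Z(H)_p\times A_p$ (so $Z_i=H_i\times A_i$), your splitting criterion says that a central element $x$ of order $p^i$ splits from $G$ if and only if its projection to $A_i$ has order $p^i$; the number of such $x$ is a $4$-WL$_\versone$-invariant by Corollary~\ref{DetectSplittingElements} together with order detection, and these counts combined with $Z_i\cong\tilde Z_i$ force $|A_i|=|\tilde A_i|$ for all $i$, hence $A\cong\tilde A$. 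If you flesh out that counting argument (the ``multiplicative formula'' you allude to), you have the paper's proof; as written, the alternative is only a sketch and your primary argument does not go through.
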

\begin{proof}
	Consider a prime $p$ that divides $|G|$. If $\tilde{G}$ is another group with $\tilde{G}\equiv^\versone_4 G$ then $|G|=|\tilde{G}|$ and by Lemma~\ref{LemCenterDetectable} we have $Z(\tilde{G})\cong Z(G)$. Since abelian groups are direct products of their Sylow-subgroups, there must be an isomorphism between the respective Sylow-$p$-subgroups of the centers,
	$Z$ and $\tilde{Z}$ say. Write $\tilde{G}=\tilde{H}\times\tilde{A}$ with maximal abelian direct factor $\tilde{A}$. We can decompose $Z$ as $Z=Z_1\times\dots\times Z_m$ with $Z_i\cong C_{p^i}^{e_i}$, $e_i\geq 0$, and for each $i$ there are subgroups $H_i\leq Z(H)$ and $A_i\leq A$ such that $Z_i=H_i\times A_i$. Similarly define $\tilde{Z}_i,\tilde{H}_i$ and $\tilde{A}_i$.	Since $Z\cong \tilde{Z}$ it also holds that $Z_i\cong \tilde{Z}_i$ for all $i$
	and therefore it is enough to show that $|A_i|=|\tilde{A}_i|$ for all $i$. Since $H$ does not admit abelian direct factors, Lemma~\ref{lem:splitting_direct_componentwise} implies that central elements of order $p^i$ split from $G$ if and only if $|A_i(x)|=p^i$, where $A_i(x)$ is the projection of $x$ onto the component $A_i$ in the decomposition of $Z$ from above. 
	The same then holds for $\tilde{G}$ and $\tilde{A}_i$. By Lemma~\ref{DetectSplittingElements}, $4$-WL$_\versone$ detects the set of central elements of order $p^i$ that split from a group. In particular, if $\tilde{G}\equiv^\versone_4 G$ holds then $|\{x\in Z\mid |x|=p^i=|A_i(x)|\}|=|\{x\in \tilde{Z}\mid |x|=p^i=|\tilde{A}_i(x)|\}|$ which in turn shows $|A_i|=|\tilde{A}_i|$, since both these groups are some direct power of $C_{p^i}$ by definition.
\end{proof}

Controlling the non-abelian part is more complicated. We first introduce a new technical framework.
\begin{dfn}\label{dfn:component:wise:filt}
	Let $G=L\times R$. A \textbf{component-wise filtration} of $U\leq G$ w.r.t.~$L$ and $R$ is a chain of subgroups $\{1\}=U_0\leq\dots\leq U_r=U$ such that for all $1\leq i<r$, we have $U_{i+1}
	\leq U_i(L\times\{1\})$ or $U_{i+1}\leq U_i(\{1\}\times R)$. The filtration is $k$-WL$_\versone$-\textbf{detectable} if all subgroups in the chain are.
\end{dfn}

\begin{lem}\label{lem:iso_type_max_abelian_factor}
	Let $G=H\times A$ with maximal abelian direct factor $A$. There exists a component-wise filtration
	of $Z(G)$ with respect to $H$ and $A$, say $\{1\}=U_0\leq\dots\leq U_r=Z(G)$, that is $4$-WL$_\versone$-detectable.
\end{lem}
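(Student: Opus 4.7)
My plan is to build the filtration Sylow-by-Sylow. Since $Z(G)=Z(H)\times A$ is abelian and its distinct Sylows commute and intersect trivially, once each Sylow $Z(G)_p$ has a component-wise $4$-WL$_\versone$-detectable filtration (with respect to $Z(H)_p$ and $A_p$), I can concatenate over primes to obtain the filtration of $Z(G)$. Fix a prime $p$ and write $B:=Z(G)_p=B_H\times B_A$ with $B_H:=Z(H)_p$ and $B_A:=A_p$.

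The backbone of my chain is the sequence of $p$-th-power subgroups $B^{p^j}=\{z^{p^j}:z\in B\}$, which are $4$-WL$_\versone$-detectable by Lemma~\ref{lem:basicClosure} and already component-wise compatible since $B^{p^j}=(B_H)^{p^j}\times(B_A)^{p^j}$. However, a single backbone step $B^{p^{j+1}}\leq B^{p^j}$ typically adds elements in both $H$- and $A$-directions at once, so I plan to insert intermediate subgroups
\[
W^{(j)}:=(B_H)^{p^j}\times(B_A)^{p^{j+1}},
\]
giving a refinement $B^{p^{j+1}}\leq W^{(j)}\leq B^{p^j}$ in which the first step is into $H$ (because $W^{(j)}\leq B^{p^{j+1}}(H\times\{1\})=H\times(B_A)^{p^{j+1}}$) and the second is into $A$ (because $B^{p^j}\leq W^{(j)}(\{1\}\times A)=(B_H)^{p^j}\times A$). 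With $p^e=\exp(B)$, assembling these pieces yields the component-wise chain
\[
\{1\}=B^{p^e}\leq W^{(e-1)}\leq B^{p^{e-1}}\leq W^{(e-2)}\leq\dots\leq W^{(0)}\leq B.
\]

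The main obstacle is showing that each $W^{(j)}$ is $4$-WL$_\versone$-detectable, because $B_H$ and $B_A$ individually are \emph{not} canonical subgroups of $Z(G)$ — different choices of the direct decomposition $G=H\times A$ produce different $B_H$'s and $B_A$'s, so they cannot be used as subset selectors on their own. To locate $W^{(j)}$ canonically I plan to exploit splitting: by Corollary~\ref{DetectSplittingElements} the set of elements of $G$ that split from $G$ is $4$-WL$_\versone$-detectable, and combining Lemma~\ref{lem:splitting_direct_componentwise} with the abelian analysis of Lemma~\ref{lem:SplitFromDirectAbelian} shows that $(z_1,z_2)\in B$ splits from $G$ precisely when $|z_2|=|z|$ and $z_2$ has a generator-order projection onto some cyclic factor of $B_A$ of matching order. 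This canonical criterion distinguishes the $A$-direction: intersecting and combining the subgroups generated by splitting elements of specified orders with the power subgroups $B^{p^j}$ and socle subgroups $\Omega_j(B)$ (all detectable by Lemma~\ref{lem:basicClosure}), I will pin down each $W^{(j)}$ as a detectable Boolean combination of these canonical building blocks.

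I expect the delicate case to be when $B_A$ has cyclic factors of several distinct orders or when $\exp(B_A)<\exp(B_H)$, so that a single $W^{(j)}$ is too coarse to be directly characterized by splitting alone. In that situation the interpolation must be refined into a longer sub-chain that processes one Ulm layer of $B_A$ at a time, with each layer's detectability extracted from the splitting set restricted to elements of a specific order and combined with $\Omega_j$ and $p^j$-power operations. Once every intermediate subgroup is shown detectable, concatenating the per-Sylow filtrations across all primes $p\mid|Z(G)|$ gives the asserted component-wise $4$-WL$_\versone$-detectable filtration of $Z(G)$.
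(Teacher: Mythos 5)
Your overall architecture (Sylow-by-Sylow, a backbone of power subgroups, interpolating steps that first move in the $H$-direction and then in the $A$-direction, with splitting elements as the canonicity device) matches the paper's in spirit, but the concrete interpolating subgroups you propose do not work, and the step you defer --- their detectability --- is precisely the content of the lemma. The group $W^{(j)}=(B_H)^{p^j}\times(B_A)^{p^{j+1}}$ need not even be $\mathrm{Aut}(G)$-invariant, and since every $k$-WL$_\versone$-detectable subset is a union of stable color classes and hence $\mathrm{Aut}(G)$-invariant, it is then not detectable for any $k$. Concretely, let $H$ be the central product of $Q_8$ and $C_4=\langle c\rangle$ identifying $-1$ with $c^2$; this group of order $16$ is directly indecomposable (so it has no abelian direct factor), has $Z(H)=\langle \bar c\rangle\cong C_4$, $H'=\langle \bar c^{\,2}\rangle$, and $H/H'\cong C_2^3$ with $\bar cH'\neq H'$. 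Take $G=H\times\langle a\rangle$ with $|a|=2$, so $A=\langle a\rangle$ is the maximal abelian direct factor and $Z(G)\cong C_4\times C_2$. Your $W^{(0)}=(B_H)\times(B_A)^2=\langle\bar c\rangle\times\{1\}$. Choose a homomorphism $\alpha:H\to\langle a\rangle$ with $\alpha(\bar c)=a$ (possible since $\bar cH'$ is nontrivial in $H/H'\cong C_2^3$); then $(h,x)\mapsto(h,\alpha(h)x)$ is an automorphism of $G$ sending $(\bar c,1)$ to $(\bar c,a)\notin W^{(0)}$. The same obstruction defeats your proposed repair: the splitting set, the power subgroups $B^{p^j}$ and the subgroups $\Omega_j(B)$ are all $\mathrm{Aut}(G)$-invariant, so no Boolean combination of them can single out $W^{(0)}$. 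Note that your criterion ``$(z_1,z_2)$ splits iff $|z_2|=|z|$ and $z_2$ projects onto a generator of a cyclic factor of $B_A$'' sees only the $A$-coordinate up to such automorphisms; it cannot isolate $B_H\times\{1\}$ from its twisted copies.

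The fix is to reverse your logic, which is what the paper's proof does. Rather than fixing the component-wise subgroups in advance and then trying to prove they are canonical, one defines the intermediate subgroups by canonical generating sets --- roughly, the subgroups generated by $p^{N-j}$-th powers of central elements of prescribed order that do, respectively do not, split from $G$, together with the previously built term --- so that detectability is immediate from Corollary~\ref{DetectSplittingElements} and Lemma~\ref{lem:basicClosure}. The work then goes into the opposite direction: showing that these canonical subgroups happen to ascend component-wise, using that for $(h,a)\in Z(H)\times A$ which does not split one has either $|h|>|a|$ or $a=b^{p}a_0$ with $|a_0|<|a|$, so that modulo the previous term the relevant powers land entirely in the $H$-direction (and dually, powers of splitting elements land in the $A$-direction). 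In your proposal this entire argument is compressed into ``I will pin down each $W^{(j)}$ as a detectable Boolean combination,'' which, as the example shows, cannot be done for the $W^{(j)}$ you chose; the subsequent appeal to refining by Ulm layers is a gesture at the right idea but is not carried out and does not by itself resolve the non-canonicity of $B_H$.
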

\begin{proof}
	First let $p_1<\dots <p_n$ denote the primes dividing $|G|$ and write $Z_{p_i}$ for the Sylow-$p_i$ subgroup of $Z(G)=Z(H)\times A$. Assume we already have a component-wise filtration of 
	\[
		U=Z_{p_1}\times\dots\times Z_{p_{i-1}}\times \{ z\in Z_{p_i}\mid |z|< p_i^m\}
	\] 
	with respect to $H$ and $A$ which is furthermore $4$-WL$_\versone$-detectable. We will argue how to extend it to $U\{z\in Z_{p_i}\mid |z|\leq p_i^m\}$ and then the claim follows by induction. To simplify our notation let $p:=p_i$ and let $N$ be maximal such that $p^N$ divides $|Z(G)|$. 
	
	Set $V_0:=\{ z\in Z_{p}\mid |z|< p^m\}$ and for $j\geq 1$ define
	\[
		V_{j}:=\langle \{z^{p^{N-j}}\mid z\in Z_p, |z^{p^{N-j}}|\leq p^{m} \}\rangle V_{j-1},
	\] 
		so we aim to extend the filtration such that elements with roots of higher order are added in earlier steps.
	Further define
	\[
		W_j:=\langle \{ z^{p^{N-j}}\mid z\in Z_p,|z|\leq p^{N-j+m}~\text{and}~z~\text{does not split from}~G\}\rangle V_{j-1}.
	\] 
	By construction, we have 
	\[
		U=UV_0\leq UW_1\leq UV_1\leq\dots\leq UW_N\leq UV_N=U\{ z\in Z_{p_i}\mid |z|\leq p_i^m\}.
	\]It remains to show that all $W_j$ and $V_j$ are detectable in $G$ and that the subchain $V_{j-1}\leq W_{j}\leq V_{j}$ ascends component-wise for all $j\geq 1$. 
	
	To show that $W_j$ and $V_j$ are detectable, recall that the set of elements that split from $G$ are detectable via $4$-WL$_\versone$ according to Corollary~\ref{DetectSplittingElements} and central $e$-th powers are detectable for all $e\in\mathbb{Z}$ according to Lemma~\ref{lem:basicClosure}, thus $V_j$ and $W_j$ are detectable for all $j$.
	
	To show component-wise ascension, note that if $A=\langle a\rangle\times K_a$ then for all $h\in Z(H)$ with $|h|\leq |a|$ it holds
	$G=(H\times K_a)\times\langle (h,a)\rangle$. So if $(h,a)\in Z(H)\times A$ does not split from $G$ then either $|h|>|a|$ or $a$ does not split from $A$ and then there is some $b\in A$ with $|ab^p|<|a|$ according to Corollary~\ref{cor:SplitFromDirectAbelian}. Consider $x:=(h,a)^{p^{N-j}}\in W_j$ where $|(h,a)|=p^{N-j+m}$ and $(h,a)$ does not split from $G$. If $|h|>|a|$ then
	$x\in (h^{p^{N-j}},1)V_{j-1}$ since $V_{j-1}$ contains all $p$-elements of order smaller than $p^m$. Otherwise $|a|=p^{N-j+m}$ and there is some $b\in A$ with $|ab^{-p}|<|a|$. First, this implies $x\in (h,b^{p})^{p^{N-j}}V_{j-1}$, again using the fact that $V_{j-1}$ contains $V_0$. 
	Now by definition $(1,b^{p^{N-j+1}})\in V_{j-1}$ and thus $x\in (h^{p^{N-j}},1)V_{j-1}$.	
	In conclusion, $V_{j-1}\leq W_j$ is a component-wise extension. The same holds for $W_j\leq V_j$, since $H$ has no abelian direct factors and so if $(h,a)\in V_j$ splits from $G$ then also $(1,a)$ splits from $G$ (this follows from Lemma~\ref{lem:splitting_direct_componentwise}) and so it holds that $V_j\leq (\{1\}\times A)W_j$.
\end{proof}

\begin{lem}\label{lem:semi-abelian-case}
	Consider $G:=H\times A$ and $\hat{G}=\hat{H}\times \hat{A}$ where $A$ and $\hat{A}$ are maximal abelian direct factors. Then, for $k\geq 5$, $G\equiv^\versone_{k}\hat{G}$ implies $H\equiv^\versone_{k-1} \hat{H}$.
\end{lem}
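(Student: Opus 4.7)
The plan is to use the $4$-WL$_\versone$-detectable component-wise filtration of $Z(G)$ from Lemma~\ref{lem:iso_type_max_abelian_factor} as a bridge, translating a Duplicator winning strategy on $(G,\hat G)$ into one on $(H,\hat H)$. First, I fix a $4$-WL$_\versone$-detectable component-wise filtration $\{1\}=U_0\leq U_1\leq\dots\leq U_r=Z(G)$ of $Z(G)$ with respect to $H$ and $A$. Because $G\equiv^\versone_k \hat G$ with $k\geq 5\geq 4$, Theorem~\ref{MainThm1} produces a matching chain $\{1\}=\hat U_0\leq\dots\leq\hat U_r=Z(\hat G)$ in $\hat G$ with $\chi_G^{\versone,k}(U_i)=\chi_{\hat G}^{\versone,k}(\hat U_i)$ for every $i$. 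Since the filtration is defined by detectable properties (centrality, $p^j$-th powers, and (non-)splitting) and since $\hat A$ is itself a maximal abelian direct factor of $\hat G$ by Lemma~\ref{WLSplitMAximalAbelian}, the analogous chain in $\hat G$ is again component-wise, now with respect to $\hat H$ and $\hat A$.

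Second, since $G\equiv^\versone_k \hat G$ Duplicator wins the $(k+1)$-pebble game on $(G,\hat G)$, so Lemma~\ref{ResepctSubgroupChains} yields a Duplicator winning strategy in the $k$-pebble game on $(G,\hat G)$ in which every bijection $F\colon G\to\hat G$ chosen by Duplicator satisfies $F(xU_i)=F(x)\hat U_i$ for all $x\in G$ and all $i$. In particular $F$ respects $Z(G)$-cosets, while the intermediate subgroups $U_i$ further constrain how $F$ aligns the $H$-directed and $A$-directed pieces of $Z(G)$ with those of $Z(\hat G)$.

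The third and technically central step is to convert this into a Duplicator winning strategy for the $k$-pebble game on $(H,\hat H)$, which by Lemma~\ref{AlgoVSGame} gives $H\equiv^\versone_{k-1}\hat H$. The plan is: given any configuration $[(h_1,\dots,h_{k-1}),(\hat h_1,\dots,\hat h_{k-1})]$ on $(H,\hat H)$, embed it into $(G,\hat G)$ via $h\mapsto(h,1)$ and $\hat h\mapsto(\hat h,1)$ using $k-1$ of the $k$ pebble pairs. When Spoiler picks up a pebble pair in the $(H,\hat H)$-game, mirror this in the $(G,\hat G)$-game and obtain a Duplicator bijection $F$ respecting the filtration. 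From $F$, I extract a bijection $f_H\colon H\to\hat H$, essentially by composing $F|_{H\times\{1\}}$ with the canonical projection $\pi_{\hat H}\colon\hat G\to\hat H$, using the filtration to make this coherent. Once $f_H$ is verified to be a valid Duplicator move, the fact that it answers Spoiler's subsequent moves correctly follows because a tuple $(h_1,\dots,h_m)\in H^{(m)}$ and the corresponding embedded tuple $((h_1,1),\dots,(h_m,1))\in G^{(m)}$ generate isomorphic subgroups, so Spoiler's winning conditions on $(H,\hat H)$ are inherited from those on $(G,\hat G)$.

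The main obstacle will be producing $f_H$ as a genuine bijection. Because the decomposition $G=H\times A$ is not canonical, $F$ need not map $H\times\{1\}$ into $\hat H\times\{1\}$, and so $\pi_{\hat H}\circ F|_{H\times\{1\}}$ is not automatically injective. To resolve this, the component-wise structure of the filtration must be exploited beyond merely respecting $Z(G)$: since the filtration canonically separates $H$-directed from $A$-directed growth in $Z(G)$, the $\hat A$-component of $F(h,1)$ is forced into a range determined by the $Z(G)$-coset of $(h,1)$, and this constraint, together with $F$ respecting every $U_i$, should pin down $\pi_{\hat H}(F(h,1))$ as a bijective function of $h$. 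Making this precise and verifying that the resulting $f_H$ yields a legitimate Duplicator response in every round is where the bulk of the technical argument will lie.
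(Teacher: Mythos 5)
Your plan follows the paper's proof in outline (the detectable component-wise filtration from Lemma~\ref{lem:iso_type_max_abelian_factor}, Lemma~\ref{ResepctSubgroupChains} to force Duplicator's bijections to respect the filtration cosets, then a transfer of the game to the non-abelian parts), and you correctly locate the difficulty: since $G=H\times A$ is not canonical, a coset-respecting bijection $F$ need not send $H\times\{1\}$ into $\hat H\times\{1\}$, so $\pi_{\hat H}\circ F|_{H\times\{1\}}$ is not obviously injective. But you then defer exactly this point (``should pin down \dots as a bijective function of $h$''; ``where the bulk of the technical argument will lie''), and that deferred step is the actual content of the lemma. The missing ingredient is the following observation, which the paper isolates as claim $\circledast$: because every step of the component-wise filtration extends either only in the $H$-direction or only in the $A$-direction, a nontrivial element of $Z(H)\times\{1\}$ and a nontrivial element of $\{1\}\times A$ never first appear at the same index $i$ of the chain $U_0\leq\dots\leq U_r$; moreover $G\equiv^\versone_k\hat G$ forces the step types ($H$-directed versus $A$-directed) to match between the two chains. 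Since $F(xU_i)=F(x)\hat U_i$ for all $i$ and the $U_i$ are central, $g_1g_2^{-1}$ and $F(g_1)F(g_2)^{-1}$ first enter the chains at the same index, so $g_1g_2^{-1}\in Z(H)\times\{1\}\setminus\{1\}$ implies $F(g_1)F(g_2)^{-1}\notin\{1\}\times\hat A$. This is what rules out $F(h_1,1)$ and $F(h_2,1)$ lying in the same coset of $\{1\}\times\hat A$ (their quotient would be central, hence the preimage difference would lie in $Z(H)\times\{1\}$), i.e., it is the injectivity you need. Without stating and proving this separation property, the filtration gives you nothing beyond respecting $Z(G)$-cosets, which is genuinely insufficient.

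A second, smaller gap sits in your final step: even once $f_H$ is a bijection onto a transversal of $\{1\}\times\hat A$, the two games drift apart, because the pebble in $\hat G$ sits on $(\hat h,\hat a)$ with $\hat a$ possibly nontrivial, while the pebble in $\hat H$ sits on $\hat h$. A Spoiler win on $(H,\hat H)$ (say $h_ih_j=h_m$ but $\hat h_i\hat h_j\neq\hat h_m$) therefore only gives a relation violated \emph{modulo} $\{1\}\times\hat A$, and one must convert this back into a win on $(G,\hat G)$. The paper does this by noting that the offending element $(x_i,a_i)(x_j,a_j)(x_m,a_m)^{-1}$ lands in $\{1\}\times\hat A$ while its preimage difference lands in $H\times\{1\}$, and then invoking $\circledast$ again together with Lemma~\ref{RespectPartialSubgroups} (this is where $k\geq 5$ is used). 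Your sketch asserts the inheritance of winning conditions from subgroup isomorphism, which does not address this mismatch.
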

\begin{proof}
	By Lemma~\ref{WLSplitMAximalAbelian} we obtain $A\cong \hat{A}$. Consider the component-wise filtrations from the proof of the previous lemma, $1=U_0\leq\dots\leq U_r=Z(G)$ and $1=\hat{U}_0\leq\dots\leq \hat{U}_r=Z(\hat{G})$, with respect to the decompositions $G:=H\times A$ and $\hat{G}=\hat{H}\times \hat{A}$. Then $U_i$ and $\hat{U}_i$ are detectable by $4$-WL$_\versone$ in $G$ and $\hat{G}$, respectively. Since we assume $G\equiv^\versone_{k}\hat{G}$ we can also assume that $U_i$ and $\hat{U}_i$ obtain the same stable colors for all $i$. Furthermore, $G\equiv^\versone_{k}\hat{G}$ implies $U_i\lneq (\{1\}\times A)U_{i+1}$ if and only if $\hat{U}_i\lneq (\{1\}\times \hat{A})\hat{U}_{i+1}$ for all $i$, as well as~$U_i\lneq (H\times \{1\})U_{i+1}$ if and only if~$\hat{U}_i\lneq (\hat{H}\times \{1\})\hat{U}_{i+1}$ for all $i$.
	
	We first show the following claim $\circledast$: For all $1\neq x\in Z(H)\times\{1\}$ and $1\neq y\in \{1\}\times A$ we have
	$\min\{i\mid x\in U_i\}\neq\min\{i\mid y\in U_i\}$. To see this, let $i$ be minimal with $x\in U_i$. By definition of component-wise filtrations and minimality of $i$, there are $h_i\in Z(H)$ with $U_i=\langle U_{i-1},h_1,\dots, h_t\rangle$. In particular, if $y\in U_i$ then $y\in U_{i-1}$ which shows the claim.
	In the same way elements of $Z(H)\times\{1\}$ can be distinguished from those in $\{1\}\times\hat{A}$, since we assume $G\equiv^\versone_{k}\hat{G}$.
	
	We make use of Lemma~\ref{ResepctSubgroupChains} regarding the subgroup chains that are defined by the chosen filtrations. Since $G\equiv^\versone_{k}\hat{G}$ Duplicator has a winning strategy in the ${k+1}$-pebble game on $(G,\hat{G})$ and then, via Lemma~\ref{ResepctSubgroupChains}, Duplicator has a winning strategy in the $k$-pebble game
	 where all bijections $f:G\to\hat{G}$ Duplicator chooses respect the subgroup chains and their respective cosets, i.e. $f(gU_i)=f(g)\hat{U}_i$ for all $i$. Then $\circledast$ implies that whenever $g_1g_2^{-1}\in Z(H)\times\{1\}$ we have $f(g_1)f(g_2)^{-1}\not\in \{1\}\times \hat{A}$.
	
	Next, we show that Duplicator must map $H\times\{1\}$ to a system of representatives modulo $\{1\}\times \hat{A}$ in each move. Otherwise there would be $(h_1,1),(h_2,1)\in G=H\times A$ and $(h,a_1),(h,a_2)\in\hat{G}=\hat{H}\times\hat{A}$ with $f(h_i,1)=(h,a_i)$. Then $(h,a_1)(h,a_2)^{-1}$ is central so the same must hold for $(h_1h_2^{-1},1)$ (since $f$ must in particular fulfill $fg(Z(G))=f(g)Z(G)$ for all $g\in G$) but then the latter is contained in $Z(H)\times\{1\}$ while $(h,a_1)(h,a_2)^{-1}\in\{1\}\times\hat{A}$, a contradiction. 
	
	In particular, this means that Spoiler can restrict the game to $H\times\{1\}$ and if it is the case that $H\not\equiv^\versone_{k-1} \hat{H}$, then Spoiler can ultimately reach a configuration $[((h_1,1),\dots,(h_{k-1},1),\perp),((x_1,a_1),\dots,(x_{k-1},a_{k-1}),\perp)]$ such that the induced configuration over $(G/(1\times A),\hat{G}/(1\times\hat{A}))$ fulfills the winning condition for Spoiler. The only possibility for the original configuration not to fulfill the winning condition for Spoiler is that there exist $i,j,m$ and either $h_i\neq h_j$, $x_i=x_j$ and $a_i\neq a_j$, but then 
	$(h_ih_j^{-1},1)$ is distinguished from $(x_i,a_i)(x_j,a_j)^{-1}=(1,a_ia_j^{-1})$ via $\circledast$,
	or $h_ih_j\neq h_m$, $x_ix_j=x_m$ and $a_ia_j\neq a_m$ in which case $(h_ih_jh_k^{-1},1)$ is distinguished from $(x_i,a_i)(x_j,a_j)(x_m,a_m)^{-1}=(1,a_ia_ja_m^{-1})$ via $\circledast$. Since $k\geq 5$ both cases can be exploited by Spoiler to win the $k$-pebble game on $(G,\hat{G})$ by Lemma~\ref{RespectPartialSubgroups}, which is a contradiction.
\end{proof}

\subsection{General Case}
Building on the previous paragraph, we reduce the general case to the semi-abelian case. Consider an indecomposable direct decomposition $G=G_1\times\dots\times G_d$, then we know that the collection of subgroups $\{G_iZ(G)\mid 1\leq i\leq d\}$ is independent of the chosen decomposition. We first show that $\bigcup_i G_iZ(G)$ can be detected by Weisfeiler-Leman refinement and then we exploit the fact that the non-commuting graph of $G$ induces components on $\bigcup_i G_iZ(G)$ which correspond to the groups $G_iZ(G)$.
\begin{dfn}
	Given a group $G$, we define the \textbf{non-commuting graph} $\Gamma_G$ with vertex set $G$, in which
	two elements $g,h\in G$ are joined by an edge if and only if $[g,h]\neq 1$.
\end{dfn}

\begin{lem}[\cite{Abdollahi_2006}, Prop. 2.1]\label{lem:noncom-connected}
	If $G$ is non-abelian then $\Gamma_G[G\setminus Z(G)]$ is connected.
\end{lem}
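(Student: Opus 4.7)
The plan is to show the stronger statement that any two non-central elements have distance at most $2$ in $\Gamma_G[G\setminus Z(G)]$. Fix $x,y\in G\setminus Z(G)$. If $[x,y]\neq 1$, then $xy$ is an edge and we are done, so assume henceforth $[x,y]=1$. Since $x\notin Z(G)$ the centralizer $C_G(x)$ is a proper subgroup, so there exists $a\in G\setminus C_G(x)$; similarly there exists $b\in G\setminus C_G(y)$. Any element not commuting with a group element is itself non-central, so $a,b\in G\setminus Z(G)$.

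I would then split into cases based on how $a$ and $b$ relate to the other vertex. If $[a,y]\neq 1$, then $a$ sits in $G\setminus Z(G)$ and is adjacent to both $x$ and $y$, giving the path $x\,\text{--}\,a\,\text{--}\,y$. Symmetrically, if $[b,x]\neq 1$, the path $x\,\text{--}\,b\,\text{--}\,y$ works. The only remaining case is $[a,y]=1$ and $[b,x]=1$, where neither $a$ nor $b$ alone does the job.

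For that final case, the key move is to combine $a$ and $b$ multiplicatively. Set $c:=ab$ and use the standard commutator identity $[uv,w]=u[v,w]u^{-1}[u,w]$ (which one verifies directly from the convention $[u,w]=uwu^{-1}w^{-1}$ fixed in the preliminaries). This gives
\[
[c,x]=a[b,x]a^{-1}[a,x]=[a,x]\neq 1,\qquad [c,y]=a[b,y]a^{-1}[a,y]=a[b,y]a^{-1}\neq 1,
\]
where we used $[b,x]=1$ and $[a,y]=1$, together with the fact that conjugation is a bijection, so $a[b,y]a^{-1}\neq 1$ because $[b,y]\neq 1$. In particular $c\notin Z(G)$ (else $[c,x]=1$), and the path $x\,\text{--}\,c\,\text{--}\,y$ lies entirely inside $G\setminus Z(G)$, completing the connectedness proof.

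There is no genuine obstacle in this argument; the only subtlety is the construction of the combined witness $ab$ in the last case, and the computation that it simultaneously fails to commute with both $x$ and $y$. The entire proof fits on a few lines once the commutator identity is invoked, which is why the cited reference treats it as a short proposition.
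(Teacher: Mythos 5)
Your proof is correct and is essentially the standard argument for this fact (the paper itself only cites \cite{Abdollahi_2006}, where Proposition 2.1 is proved by exactly this diameter-at-most-2 argument with the witness $ab$ in the final case). The commutator computation checks out with the paper's convention $[u,w]=uwu^{-1}w^{-1}$, and the inequalities $[c,x]\neq 1$, $[c,y]\neq 1$ also guarantee $c\notin\{x,y\}$, so the path is genuine.
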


We now approximate $\bigcup_i G_iZ(G)$ from below by constructing a canonical central decomposition of $G$ which is WL-detectable.
\begin{dfn}
	Consider a finite, non-abelian group $G$.
	Define $M_1\subseteq G$ to be the set of non-central elements~$g$ whose centralizers~$C_G(g)$ have maximal order among all non-central elements. Iteratively define $M_{i+1}$ by adding those elements~$g$ to $M_i$ that have maximal centralizer order~$|C_G(g)|$ among the remaining elements $G\setminus\langle M_i\rangle$. Set $M:=M_{\infty}$ to be the stable set resulting from this process. 
	Consider the subgraph of $\Gamma_G$ induced on $M$ and let $K_1\dots,K_m$ be its connected components.
	Set $N_i:=\langle K_i\rangle$. We call $N_1,\dots,N_m$ the \textbf{non-abelian components} 
	of $G$. 
\end{dfn}

\begin{lem}\label{PropertiesNonAbelianComponents}
	In the notation of the previous definition, the following hold:
	\begin{enumerate}
		\item $M$ is detectable in $G$ by $3$-WL$_{{\verstwo}}$
		\item $G=N_1\cdots N_m$ is a central decomposition of $G$.
		For all $i$, $Z(G)\leq N_i$ and $N_i$ is non-abelian. In particular $M$ generates $G$.
		\item If $G=G_1\times\dots\times G_d$ is an arbitrary direct decomposition, then for each $1\leq i\leq m$ there is exactly
		one $1\leq j\leq d$ with $N_i\subseteq G_jZ(G)$. Collect all such $i$ for one fixed $j$ in an index set~$I_j$.
		Then the product over all $N_i$ for $i\in I_j$ is equal to $G_jZ(G)$.
	\end{enumerate}
\end{lem}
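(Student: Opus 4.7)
My plan addresses the three parts in order, with the centralizer-comparison argument under the direct decomposition doing the heavy lifting in Part 3.

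For Part 1, the detectability of $M$ follows by induction on the iteration step. The centralizer order $|C_G(g)|$ is a $2$-WL$_\verstwo$ invariant and $Z(G)$ is itself $2$-WL$_\verstwo$-detectable by Lemma~\ref{LemCenterDetectable}, so the base set $M_1$—non-central elements realizing the maximum centralizer order—is a $2$-WL$_\verstwo$-detectable union of color classes. Inductively, $\langle M_i\rangle$ is $3$-WL$_\verstwo$-detectable by Part~3 of Lemma~\ref{lem:basicClosure}, hence so is its complement, and within that complement the elements of maximum centralizer order form the $3$-WL$_\verstwo$-detectable set $M_{i+1}$. The stable limit is $M$.

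For Part 2, I would first observe that $M$ is closed under multiplication by $Z(G)$: since $|C_G(gz)|=|C_G(g)|$ for any $z\in Z(G)$, and $Z(G)\leq \langle M_1\rangle$ (because $g\in M_1$ implies $gz\in M_1$, hence $z=g^{-1}(gz)\in\langle M_1\rangle$), the relation $g\in M_i\setminus M_{i-1}$ forces $gz\in M_i\setminus M_{i-1}$. Elements in distinct connected components of $\Gamma_G[M]$ commute by definition, yielding $[N_i,N_{i'}]=1$ for $i\neq i'$. The iteration terminates only when no non-central element remains outside $\langle M\rangle$, and since the non-central elements of a non-abelian group always generate the whole group, $\langle M\rangle=G$ and $G=N_1\cdots N_m$. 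To rule out abelian $N_i$, I argue by contradiction: an isolated $g\in K_i$ would centralize all of $M\setminus\{g\}$, hence $\langle M\setminus\{g\}\rangle$, and since $G=\langle M\setminus\{g\}\rangle\langle g\rangle$ is then a central product, $g$ would lie in $Z(G)$, contradicting $g\in M$. Finally, for $Z(G)\leq N_i$ I pick $g\in K_i$ with a neighbor $h\in K_i$; then $gz\in M$ satisfies $[gz,h]=[g,h]\neq 1$, so $gz\in K_i$, and $z=(gz)g^{-1}\in N_i$.

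For Part 3, the crucial claim is that every $g\in M$ lies in $G_jZ(G)$ for a unique $j$. Uniqueness for non-central $g$ is a short coordinate argument using $G_j\cap G_{j'}=\{1\}$. For existence, suppose $g=(g_1,\dots,g_d)$ had non-central coordinates in two distinct positions $j_1,j_2$. Let $g'$ be obtained from $g$ by replacing $g_{j_2}$ with~$1$, and let $g''$ be supported only at position $j_2$ by $g_{j_2}$, so $g=g'g''$. Using the product structure $C_G(g)=\prod_j C_{G_j}(g_j)$, one sees $|C_G(g')|>|C_G(g)|$ and $|C_G(g'')|>|C_G(g)|$ because a non-central coordinate strictly contracts the corresponding centralizer factor. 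If $g$ is added at stage $i$, then both $g'$ and $g''$—being non-central candidates of strictly higher centralizer order—must already lie in $\langle M_{i-1}\rangle$, but then $g=g'g''\in\langle M_{i-1}\rangle$, a contradiction. Since elements of $M$ in distinct $G_jZ(G)$'s commute, every component $K_i$ is confined to one $G_{j(i)}Z(G)$, defining $I_j$. The equality $\prod_{i\in I_j}N_i=G_jZ(G)$ for non-abelian $G_j$ follows since the greedy process eventually exhausts enough pure non-central elements of $G_jZ(G)$ to generate it modulo $Z(G)$, and $Z(G)\leq N_i$ from Part 2 supplies the central part.

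The main obstacle is precisely this centralizer-comparison argument in Part 3: carefully relating the greedy centralizer-maximizing selection rule to the product structure $C_G(g)=\prod_j C_{G_j}(g_j)$, so that mixed-support elements are provably preempted by their pure coordinates at every stage of the iteration. Once this combinatorial fact is established, the remaining assertions in all three parts flow from the closure and detectability tools of Sections~\ref{SecTechnicalBasics} and~\ref{SecWLvsGroupStructure} together with the connectivity property of $\Gamma_G$ from Lemma~\ref{lem:noncom-connected}.
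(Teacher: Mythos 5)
Your proposal follows essentially the same route as the paper's proof: Part 1 by induction using the detectability of centralizer orders and of generated subgroups (Lemma~\ref{lem:basicClosure}), Part 2 via the $Z(G)$-invariance of centralizers and the isolated-vertex argument, and Part 3 via the coordinate-splitting centralizer comparison showing every element of $M$ lies in a unique $G_jZ(G)$. The only point where you are vaguer than the paper is the final equality $\prod_{i\in I_j}N_i=G_jZ(G)$: your appeal to the greedy process "exhausting enough pure elements" is precisely the claim to be proved, whereas the paper closes it with a short contradiction (a non-central $x\in G_jZ(G)\setminus\prod_{i\in I_j}N_i$ would, since $Z(G)\leq N_i$ for all $i$, fail to lie in the product of all the $N_i$, contradicting $G=N_1\cdots N_m$), so that step should be argued rather than asserted.
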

\begin{proof}
	\begin{enumerate}
		\item $M_1$ is detectable by $2$-WL$_\verstwo$ since group elements are generally distinguishable by the orders of their centralizers (Lemma~\ref{lem:basicClosure}). Assume that $M_i$ is detectable by $3$-WL$_\verstwo$.
		Then $\langle M_i\rangle$ is detectable as well by Lemma~\ref{lem:basicClosure} and so is $G\setminus\langle M_i\rangle$.
		Thus, elements of $G\setminus\langle M_i\rangle$ are distinguishable from all other elements and can then be further distinguished by the orders of their centralizers. So $M_{i+1}$ is detectable and the claim follows inductively.
		\item By definition, the construction of $M$ does not terminate until $M_i$ contains a generating set of $G$, so
		$G=\langle M\rangle$. For $g\in G$ and $z\in Z(G)$ it holds that $C_G(gz)=C_G(g)$,
		thus $M_1Z(G)=M_1$ and then $M_iZ(G)=M_i$ via induction. 
		
		We claim that the connected components
		of $\Gamma_G$ induced on $M$ all contain more than one element. Otherwise say $K_i=\{x\}$
		and so $[x,M]=\{1\}$. Since $\langle M\rangle =G$, it follows that $x$ must be central. But by construction we never add central elements to $M$. 
		In conclusion, $\Gamma_G[M]$ is a disjoint
		union of non-trivial components. So if $x\in K_i$, there is some $y\in K_i$ with $xy\neq yx$ and hence, $N_i$ is non-abelian for all $i$. For all $z\in Z(G)$ we also have $(xz)y\neq y(xz)$ and conclude that~$xz \in K_i$. Overall we obtain 
		$K_iZ(G)=K_i$, implying that $Z(G)\leq N_i$. Finally we note that by definition of $\Gamma_G$, $N_i$ and $N_j$ centralize each other for $i\neq j$. 
		\item We first argue that all elements in $M$ belong to some $G_jZ(G)$. Assume otherwise
		that $x=(x_1,\dots,x_d)\in M$ with~$x_i\in G_i$ and more than one $x_i$ is non-central. Then
		$x=(x_1,1\dots,1)\cdot (1,x_2,1,\dots,1)\dots (1,\dots,1,x_d)$ is a product of elements, each
		with a strictly bigger centralizer than $x$, and so $x$ would have never been selected to be added to 			$M$. Thus, each element of $M$ belongs to exactly one $G_jZ(G)$ and if two elements from $M$
		do not commute they must belong to the same $G_jZ(G)$. 
		Finally assume that $\prod_{I_j} N_i=H_j \lneq G_jZ(G)$. Since all $N_i$ contain $Z(G)$, there must be some non-central element $x$ in $G_jZ(G)\setminus H_j$. But then $x$ is also not contained in $H_j\times (\times_{i\neq j} G_i)Z(G)$ contradicting the fact that $G=N_1\dots N_m$.\qedhere
	\end{enumerate}
\end{proof}

\begin{dfn}
	Let $G=N_1\cdots N_m$ be the decomposition into non-abelian components and let $G=G_1\times\dots\times G_d$ be an arbitrary direct decomposition. We say $x\in G$ is \textbf{full for $(G_{j_1},\dots,G_{j_r})$}, if $\{ 1\leq i\leq m\mid [x,N_i]\neq 1\}=I_{j_1}\cup\dots\cup I_{j_r}$. 
	For all $x\in G$ define $C_x:=\Pi_{[x,N_i]=\{1\}} N_i$ and $N_x:=\Pi_{[x,N_i]\neq \{1\}} N_i$.
\end{dfn}

\begin{obs}\label{obs:existence_full_elts}
	Given an arbitrary collection of indices $J\subseteq\{1,\dots,m\}$, the group elements $x\in G$ that have $C_x=\Pi_{i\in J}N_i$ are exactly those elements of the form $x=z\Pi_{i\in J}n_i$ with $z\in Z(G)$ and $n_i\in N_i\setminus Z(G)$.
	In particular, full elements exist for every collection of non-abelian direct factors and any direct decomposition and they are exactly given by products over non-central elements from
	the corresponding non-abelian components.
\end{obs}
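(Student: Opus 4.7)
The plan is to prove the characterization in both directions and then read off the existence of full elements. The key preliminary I would establish is that $Z(N_i)=Z(G)$ for every non-abelian component: the inclusion $Z(G)\leq Z(N_i)$ follows from $Z(G)\leq N_i$ (Lemma~\ref{PropertiesNonAbelianComponents}), while any $z\in Z(N_i)$ centralizes $N_i$ by definition and each $N_j$ with $j\neq i$ by the central-product structure, hence centralizes all of $G=N_1\cdots N_m$.

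For the forward direction I would take $x=z\prod_{i\in J}n_i$ with $z\in Z(G)$ and $n_i\in N_i\setminus Z(G)$, and check $[x,N_k]$ for each $k$ separately. If $k\notin J$, every factor of $x$ either lies in $Z(G)$ or in some $N_i$ with $i\neq k$, and so centralizes $N_k$; hence $[x,N_k]=1$. If $k\in J$, then $n_k\notin Z(N_k)$, so I can pick $m\in N_k$ with $[n_k,m]\neq 1$; since the remaining factors of $x$ commute with both $m$ and $n_k$, conjugation collapses to $xmx^{-1}=n_kmn_k^{-1}\neq m$, giving $[x,N_k]\neq 1$. This pins down exactly which components centralize $x$.

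The reverse direction is the main step. I would start from an arbitrary decomposition $x=n_1n_2\cdots n_m$ with $n_i\in N_i$, afforded by $G=N_1\cdots N_m$. For each $k$ with $[x,N_k]=1$, the factors $n_i$ for $i\neq k$ already centralize $N_k$, forcing $n_k\in Z(N_k)=Z(G)$. Collecting these central factors into a single $z\in Z(G)$ yields $x=z\prod_{i\in J}n_i$, where $J$ is precisely the index set of components not commuting with $x$. It then remains to upgrade each $n_i$, $i\in J$, to a genuinely non-central element: if some $n_{i_0}$ were central I would absorb it into $z$, apply the forward direction to the shortened expression, and derive $[x,N_{i_0}]=1$, contradicting $i_0\in J$.

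Finally, for the assertion about full elements, I would fix any direct decomposition $G=G_1\times\cdots\times G_d$ and a subfamily of non-abelian factors $(G_{j_1},\ldots,G_{j_r})$, set $J:=I_{j_1}\cup\cdots\cup I_{j_r}$, and pick $n_i\in N_i\setminus Z(G)$ for each $i\in J$; such $n_i$ exist because $N_i$ is non-abelian by Lemma~\ref{PropertiesNonAbelianComponents} while $Z(N_i)=Z(G)$. The forward direction then certifies $x:=\prod_{i\in J}n_i$ as full for the chosen factors. The only mildly delicate point in the whole argument is the non-uniqueness of central-product expressions in the reverse direction, which the preliminary identity $Z(N_i)=Z(G)$ reduces to a single absorbable central factor.
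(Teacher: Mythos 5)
Your argument is correct and complete; the paper states this as an Observation with no accompanying proof, so there is nothing to compare against, and the route you take -- first establishing $Z(N_i)=Z(G)$ from the central-product structure of Lemma~\ref{PropertiesNonAbelianComponents} and then checking $[x,N_k]$ component by component in both directions, with the absorption trick handling the non-uniqueness of the factorization -- is exactly the argument the authors presumably had in mind. One remark: what your forward direction actually establishes is $[x,N_k]\neq 1$ for $k\in J$ and $[x,N_k]=1$ for $k\notin J$, i.e.\ $N_x=\Pi_{i\in J}N_i$ (equivalently $C_x=\Pi_{i\notin J}N_i$), whereas the Observation as printed asserts $C_x=\Pi_{i\in J}N_i$ for these elements. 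The printed version is a typo: the ``in particular'' clause and the later uses of the Observation (in Lemma~\ref{DirectlyInducedFull} and in the corollary detecting $\bigcup_i G_iZ(G)$) all require the $N_x$ reading, so you have silently proved the intended statement -- just be aware that your conclusion and the literal wording of the first sentence do not match, and say so explicitly if you write this up.
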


\begin{lem}\label{DirectlyInducedFull} Let $G$ be non-abelian and let $G=G_1\times\dots\times G_d$ be an indecomposable direct decomposition. 
	For all $x\in G$ we have a central decomposition $G=C_xN_x$ with $Z(G)\leq C_x$ and~$Z(G)\leq N_x$. The decomposition is directly induced if and only if $x$ is full for a collection of direct factors of $G$.
\end{lem}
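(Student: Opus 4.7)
The central decomposition $G=C_xN_x$ is immediate from the properties collected in Lemma~\ref{PropertiesNonAbelianComponents}: the components $N_i$ pairwise centralize each other and together generate $G$, so $C_x$ and $N_x$ commute and their product is $N_1\cdots N_m=G$. Since each $N_i$ contains $Z(G)$, one has $Z(G)\leq C_x$ and (provided $x\notin Z(G)$, in which case $N_x$ is non-trivial) also $Z(G)\leq N_x$.

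For the forward direction, suppose $x$ is full for $(G_{j_1},\dots,G_{j_r})$, so that $\{i : [x,N_i]\neq 1\} = I_{j_1}\cup\dots\cup I_{j_r}$. Let $K_2:=G_{j_1}\times\dots\times G_{j_r}$ and let $K_1$ be the direct product of the remaining $G_j$'s, so $G=K_1\times K_2$. Using Lemma~\ref{PropertiesNonAbelianComponents}(3), namely $\prod_{i\in I_j}N_i=G_jZ(G)$, we compute
\[
K_2 Z(G)=\prod_{s=1}^{r}G_{j_s}Z(G)=\prod_{s=1}^{r}\prod_{i\in I_{j_s}}N_i=\prod_{[x,N_i]\neq 1}N_i=N_x,
\]
and the analogous identity $K_1Z(G)=C_x$ holds over the complementary index set. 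Hence the decomposition is directly induced.

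For the reverse direction, assume $G=C_xN_x$ is directly induced, say $G=K_1\times K_2$ with $C_x=K_1Z(G)$ and $N_x=K_2Z(G)$. Decomposing $K_1$ and $K_2$ into indecomposable direct factors yields an indecomposable decomposition of $G$; by Krull--Remak--Schmidt (Lemma~\ref{lem:uniquenes:of:direct:factors}) there is a partition $\{1,\dots,d\}=A\uplus B$ with $K_1Z(G)=\prod_{j\in A}G_jZ(G)$ and $K_2Z(G)=\prod_{j\in B}G_jZ(G)$. Applying Lemma~\ref{PropertiesNonAbelianComponents}(3) again gives
\[
C_x=\prod_{i\in\bigcup_{j\in A}I_j}N_i,\qquad N_x=\prod_{i\in\bigcup_{j\in B}I_j}N_i.
\]
It remains to identify $T:=\{i : [x,N_i]=1\}$ with $\bigcup_{j\in A}I_j$. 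The inclusion $\bigcup_{j\in A}I_j\subseteq T$ is clear since every $N_i$ appearing in $C_x$ centralizes $x$. For the converse, suppose $i\in T$ with $i\in I_{j_0}$ for some $j_0\in B$. Then $N_i\subseteq C_x$ by definition of $C_x$, while $N_i\subseteq G_{j_0}Z(G)\subseteq N_x$. A short direct-product argument shows $C_x\cap N_x=Z(G)$: any element in the intersection can be written both as $k_1z_1$ and as $k_2z_2$ with $k_i\in K_i$, $z_i\in Z(G)$; writing $z_2z_1^{-1}\in Z(G)=Z(K_1)\times Z(K_2)$ and comparing components forces $k_1,k_2\in Z(G)$. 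Hence $N_i\subseteq Z(G)$, contradicting the non-abelianness of $N_i$ (Lemma~\ref{PropertiesNonAbelianComponents}(2)). Therefore $T=\bigcup_{j\in A}I_j$, so $\{i : [x,N_i]\neq 1\}=\bigcup_{j\in B}I_j$, meaning $x$ is full for $(G_j)_{j\in B}$.

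The main obstacle is the reverse direction, specifically the step that rules out an $N_i$ sitting simultaneously in $C_x$ and in a direct factor assigned to $N_x$. The key leverage is the combination of Krull--Remak--Schmidt (which forces the $G_jZ(G)$ to be canonical as a family) with the fact that the non-abelian components are genuinely non-abelian, so they cannot collapse into $C_x\cap N_x=Z(G)$.
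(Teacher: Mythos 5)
Your proof is correct and follows essentially the same route as the paper: the forward direction by computing $N_x$ and $C_x$ via Lemma~\ref{PropertiesNonAbelianComponents}(3), and the reverse direction by refining the directly induced decomposition into indecomposables and invoking Krull--Remak--Schmidt (Lemma~\ref{lem:uniquenes:of:direct:factors}). Your explicit verification that $C_x\cap N_x=Z(G)$, which rules out a non-abelian $N_i$ lying on both sides, is a detail the paper leaves implicit, and your caveat about $x\in Z(G)$ correctly flags a degenerate case the statement glosses over.
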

\begin{proof}
	If $x$ is full for $(G_{j_1},\dots,G_{j_r})$ then by Lemma~\ref{PropertiesNonAbelianComponents}, 
	$N_x=(G_{j_1}\times\dots\times G_{j_r})Z(G)$ and $C_x=(\times_{i\notin\{j_1,\dots,j_r\}}G_i)Z(G)$, so the central decomposition $G=C_xN_x$ is directly induced.

	For the other direction assume the decomposition $G=C_xN_x$ is directly induced and consider subgroups $\tilde{C}_x\leq C_x$, $\tilde{N}_x\leq N_x$ with $G=\tilde{C}_x\times\tilde{N}_x$, $C_x=\tilde{C}_xZ(G)$
	and $N_x=\tilde{N}_xZ(G)$. Consider indecomposable direct decompositions $\tilde{C}_x=\times_{t\in I_C} \tilde{C}_t$ and
	$\tilde{N}_x=\times_{t\in I_N} \tilde{N}_t$, where $I_C$ and $I_N$ are suitable index sets. Then, for all $t\in I_C$, the group $\tilde{C}_t$ is an indecomposable direct factor of $G$, so $\tilde{C}_tZ(G)=G_{j_t}Z(G)$ for some $j_t$ by Lemma~\ref{lem:uniquenes:of:direct:factors}. In particular, $[x,N_i]=1$ for all $i\in I_{j_t}$. Similarly, for $\tilde{N}_t$ and $t\in I_N$,
	$[x,N_i]\neq 1$ for all $i\in I_{j_t}$. Since each $G_jZ(G)$ is either contained in $C_x$ or $N_x$, the claim follows.
\end{proof}

\begin{lem}\label{DetectDirectlyIndcued}
	Suppose $G=G_1\times G_2$. For $k\geq 4$ assume that $k$-WL$_{{\verstwo}}$ detects
	$G_1Z(G)$ and detects $G_2Z(G)$ and does not distinguish $G$ from some other group $H$. Then for~$i\in\{1,2\}$ there
	are subgroups $H_i\leq H$ with $\chi^{\verstwo,k}_G(G_iZ(G))=\chi^{\verstwo,k}_H(H_iZ(H))$ and 
	$H_iZ(H)\equiv^{{\verstwo}}_k G_iZ(G)$ such that
	$H=H_1\times H_2$.
\end{lem}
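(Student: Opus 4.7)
The plan is to proceed in three steps. Step 1 is to transfer the subgroup structure: since $G\equiv^{\verstwo}_k H$ with $k\geq 4$, Theorem~\ref{MainThm1}(2) furnishes a color-preserving bijection $\Psi\colon G\to H$ which, together with the $k$-WL$_{\verstwo}$-detectability of $G_iZ(G)$, yields subgroups $U_i:=\Psi(G_iZ(G))\leq H$ satisfying $\chi^{\verstwo,k}_G(G_iZ(G))=\chi^{\verstwo,k}_H(U_i)$ and $U_i\equiv^{\verstwo}_k G_iZ(G)$. In particular $U_iZ(H)=U_i$, so the second half of the conclusion (the coloring match) is immediate, and what remains is purely the direct-product decomposition of $H$.

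Step 2 is to derive the central-product structure on $H$. The center is detectable (Lemma~\ref{LemCenterDetectable}), and Lemma~\ref{lem:basicClosure} ensures closure of detectable sets under commutators, intersections, and generated subgroups. Hence the three identities $[G_1Z(G),G_2Z(G)]=\{1\}$, $G_1Z(G)\cap G_2Z(G)=Z(G)$, and $\langle G_1Z(G),G_2Z(G)\rangle=G$ all transfer through $\Psi$ to yield $[U_1,U_2]=\{1\}$, $U_1\cap U_2=Z(H)$, and $U_1U_2=H$. So $H=U_1U_2$ is a central decomposition with overlap exactly $Z(H)$.

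Step 3 is the crux: upgrading this central product to a direct one. Finding $H_i\leq U_i$ with $H_iZ(H)=U_i$ and $H=H_1\times H_2$ is equivalent to producing a decomposition $Z(H)=T_1\times T_2$ of central subgroups such that $T_{3-i}$ splits off from $U_i$ via a complement $H_i$ containing $T_i$; in $G$ the witnesses are $T_i=Z(G_i)$ and $H_i=G_i$. My plan is to reproduce these in $H$ by applying the semi-abelian analysis (Lemma~\ref{lem:semi-abelian-case}) to each $U_i\equiv^{\verstwo}_k G_iZ(G)$, obtaining decompositions $U_i=\tilde U_i\times B_i$ with $B_i\leq Z(U_i)=Z(H)$ a maximal abelian direct factor matching the maximal abelian direct factor of $G_iZ(G)$. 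Since $Z(H)=Z(\tilde U_1)\times B_1=Z(\tilde U_2)\times B_2$, the two presentations mutually constrain each other; combining them with the component-wise filtration of Lemma~\ref{lem:iso_type_max_abelian_factor} and the detectability of splittings (Corollary~\ref{DetectSplittingElements}), one carves out $T_i$ inside $B_{3-i}$ (matching the roles of $Z(\tilde G_i)$ and the abelian direct factor of $G_i$ in $G_iZ(G)$) and assembles $H_i:=\tilde U_iT_i$. The main obstacle is precisely here: the decomposition $G=G_1\times G_2$ is highly non-canonical, so $Z(G_i)\leq Z(G)$ is not WL-detectable in $G$ and cannot simply be pulled back under $\Psi$. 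The technical weight of the proof lies in constructing the correct central splittings $T_i$ in $H$ indirectly through the abelian-factor analysis, and in verifying the direct-product identities $H_1\cap H_2=\{1\}$ and $H_1H_2=H$ from this construction.
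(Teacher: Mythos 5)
Your Steps 1 and 2 match the paper's setup: the subgroups $\tilde H_i=U_i$ with $Z(H)\leq U_i$, matching colors, and the central-product identities all transfer exactly as you say. But Step 3, which you yourself flag as the crux, is only a plan, not an argument, and the specific route you sketch does not close the gap. Invoking Lemma~\ref{lem:semi-abelian-case} is not available here: that lemma requires $k\geq 5$ and Version~$\versone$ indistinguishability (and $\equiv^{\verstwo}_4$ only gives $\equiv^{\versone}_4$), whereas the present lemma assumes only $k\geq 4$ for Version~$\verstwo$; moreover, in the paper's architecture the semi-abelian lemma is applied \emph{after} this one, in the final theorem, not inside it. What you would actually need from the WL side is Lemma~\ref{WLSplitMAximalAbelian} (the isomorphism type of the maximal abelian direct factor is identified), and your proposal to ``carve out $T_i$ inside $B_{3-i}$'' is never made precise enough to verify $H_1\cap H_2=\{1\}$ and $H_1H_2=H$.

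The missing idea is the following. Write $\tilde H_i=R_i\times B_i$ with $B_i$ a maximal abelian direct factor, so the $R_i$ have no abelian direct factors. The paper first observes that in $G$, because $\tilde G_i=G_i\times Z(G_{3-i})$ and $Z(G)=Z(G_1)\times Z(G_2)$, Lemma~\ref{lem:splitting_direct_componentwise} forces every element that splits from $Z(G)$ to split from $\tilde G_1$ or from $\tilde G_2$; this property is detectable by Corollary~\ref{DetectSplittingElements} and hence transfers to $H$. It then shows that a nontrivial element of prime order in $R_1\cap R_2$ would produce a central $p$-element $w$ of $H$ that splits from $Z(H)$ but (again by Lemma~\ref{lem:splitting_direct_componentwise}, since the $R_i$ have no abelian direct factors) from neither $\tilde H_1$ nor $\tilde H_2$ --- contradiction. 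Hence $R_1\cap R_2=\{1\}$. After choosing the $R_i$ inside a fixed complement $R$ of a maximal abelian direct factor $B$ of $H$ so that $R_1R_2\cap B=\{1\}$, the order count $|R_1|\,|R_2|\,|B|=|H|$ (from Lemma~\ref{WLSplitMAximalAbelian} applied to $G\equiv^{\verstwo}_k H$ and $\tilde G_i\equiv^{\verstwo}_k\tilde H_i$) yields $H=R_1\times R_2\times B$, and splitting $B=B_1\times B_2$ appropriately gives $H_i=R_i\times B_i$ with $R_iZ(H)=\tilde H_i$. Without this splitting-based intersection argument and the counting step, your construction of the $H_i$ remains unverified.
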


\begin{proof}Set $\tilde{G}_i:=G_iZ(G)$. As a consequence of Lemma~\ref{lem:basicClosure}, there exist subgroups of $H$, $\tilde{H}_i$ say, that correspond to $\tilde{G}_i$ with respect to stable color classes of $k$-WL$_\verstwo$. It is also implied that $Z(H)\leq \tilde{H}_i$.
	Consider the decompositions $Z(G)=Z(G_1)\times Z(G_2)$ and $\tilde{G}_i=G_i\times Z(G_{i+1\bmod 2})$ and observe the following: If $x$ splits from $Z(G)$ then, using Lemma~\ref{lem:splitting_direct_componentwise}, we see that $x$ also splits from $\tilde{G}_1$ or
	$\tilde{G}_2$. The observation is used to prove Claim 1 below.
	Write $\tilde{H}_i:=R_i\times B_i$ where $B_i$ is a maximal abelian direct factor of $\tilde{H}_i$.
	\paragraph{Claim 1:} For all possible choices of $R_i$ and $B_i$ it holds that $R_1\cap R_2= \{1\}$.
	
	By assumption, $\tilde{H}_1\cap\tilde{H}_2=Z(H)$ so $R_1\cap R_2\leq Z(H)$. For the sake of contradiction assume that there exists $z\in R_1\cap R_2$ such that $|z|=p$ for some prime $p$. Then there also exists a central $p$-element $w$ that splits form $Z(H)$ such that $z\in\langle w\rangle$ (this is always true for central elements of prime order, as we can take $w$ to be a root of $z$ of highest $p$-power order in the abelian group $Z(H)$). Write $w=(r_i,b_i)$ with respect to the chosen decompositions of $\tilde{H}_i$. For some $m\in\mathbb{N}$ we have that $w^m=z\in R_1\cap R_2$, so $1\neq w^m=(r_1^m,1)=(r_2^m,1)$, in particular
	$|b_i|<|r_i|$ for $i=1,2$, since $w$ has $p$-power order. Then $w$ does not split from $\tilde{H}_i$ or otherwise, by Lemma~\ref{lem:splitting_direct_componentwise}, $r_i$ would split from $R_i$ but $R_1$ and $R_2$ do not admit abelian direct factors. 
	
	Hence, $w$ splits from $Z(H)$ but not from $\tilde{H}_1$ or $\tilde{H}_2$ and such elements do not exist with respect to $G$, $\tilde{G}_1$ and $\tilde{G}_2$ as pointed out above.
	Claim 1 follows, since $k$-WL$_\verstwo$ detects the set of elements splitting from $Z(H)$, $\tilde{H}_1$ or $\tilde{H}_2$, respectively.  \hfill$\tiny\blacksquare$.
	
	Next we consider maximal abelian direct factors $A$ and $B$ of $G$ and $H$, respectively. Write $H:=R\times B$. By Lemma~\ref{WLSplitMAximalAbelian}, $A\cong B$.
	\paragraph{Claim 2:} $R_1$ and $R_2$ can be chosen such that $R_1R_2\cap B=\{1\}$.
	
	Let $\tilde{H}_1=\langle (r_1,b_1),\dots,(r_t,b_t)\rangle\leq R\times B$ then, since $B\leq\tilde{H}_1$,
	\[
		\tilde{H}_1=\langle (r_1,1)(1,b_1),\dots,(r_t,1),(1,b_t)\rangle=\langle (r_1,1)\dots (r_t,1)\rangle\times B.
	\] An analogous statement holds for $\tilde{H}_2$, so $R_1R_2$ can be chosen as a subgroup of $R$.\hfill$\tiny\blacksquare$
	
	To finally prove the Lemma, let $R_1$ and $R_2$ be as in Claim 2. By Claim 1, $R_1\cap R_2=\{1\}$, so $R_1R_2B=R_1\times R_2\times B\leq H$. By Lemma~\ref{WLSplitMAximalAbelian}, $G\equiv^{\verstwo}_k H$ implies
 that $|R_1||R_2||B|=|H|$, so $H=R_1\times R_2\times B$ and this can be written as $(R_1\times B_1)\times (R_2\times B_2)$, where $B_i\leq H_i$ are chosen such that $B_1\times B_2=B$ and $B_i$ is isomorphic to a maximal abelian direct factor of $G_i$.
 Furthermore, $R_iZ(H)=\tilde{H}_i$ by construction.
\end{proof}

\begin{lem}
	Let $G=N_1\cdots N_m$ be the decomposition into non-abelian components and 
	$G=G_1\times\dots\times G_d$ a decomposition into indecomposable direct factors. 
	For $k\geq 5$, $k$-WL$_{{\verstwo}}$ detects the set of elements
	that are full for only one $G_i$ as well as the pairs of elements that are full
	for the same collection of direct factors.
\end{lem}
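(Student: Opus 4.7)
The plan is to reduce both claims to two detectability facts, exploiting Lemma~\ref{DirectlyInducedFull} which identifies fullness of $x$ with the central decomposition $G=C_xN_x$ being directly induced. The two facts are (i) WL-detectability of the ``signature'' $S_x:=\{i:[x,N_i]\neq 1\}$ and (ii) WL-detectability of the set of full elements.

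For (i), each non-abelian component $N_i$ is $3$-WL$_\verstwo$-detectable by Lemma~\ref{PropertiesNonAbelianComponents}, so each centralizer $C_G(N_i)$ is $k$-WL$_\verstwo$-detectable by Lemma~\ref{lem:basicClosure}(2). Hence two elements with distinct signatures necessarily receive distinct WL-colors, since they disagree on membership in some $C_G(N_i)$, and the $2$-tuple relation $C_x=C_y$ (equivalently $N_x=N_y$) is thus detectable in $G\times G$.

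For (ii), pebbling $x$ turns both $C_x$ and $N_x$ into detectable subgroups, as they are generated by the relevant non-abelian components together with $Z(G)$. If $x$ is full, Lemma~\ref{DirectlyInducedFull} provides a direct product decomposition $G=\tilde{C}_x\times\tilde{N}_x$ with $\tilde{C}_xZ(G)=C_x$ and $\tilde{N}_xZ(G)=N_x$. Applying Lemma~\ref{DetectDirectlyIndcued} with $G_1Z(G):=C_x$ and $G_2Z(G):=N_x$ then transports this direct product structure to any WL-indistinguishable situation: in the $(k+1)$-pebble game on $(G,H)$ starting from the configuration $[(x,\perp,\ldots),(y,\perp,\ldots)]$, if Duplicator survives then $H$ must admit an analogous decomposition making $y$ full. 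Hence fullness is $k$-WL$_\verstwo$-detectable. The bound $k\geq 5$ arises because fixing $x$ with one pebble leaves $k$ pebbles for the subsequent direct-decomposition argument, and Lemma~\ref{DetectDirectlyIndcued} requires that at least five pebbles be available in the corresponding pebble game.

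Combining these yields Claim~2, since ``full for the same collection'' is exactly ``both full with matching signature''. For Claim~1, $x$ is full for only one $G_i$ iff $x$ is full and non-central, and no non-central full $y$ satisfies $N_y\lneq N_x$; since $N_y$ becomes detectable once $y$ is pebbled, this condition can be expressed as a group expression in the sense of Lemma~\ref{LemGroupExpression} and is therefore detectable. The main obstacle will be the rigorous invocation of Lemma~\ref{DetectDirectlyIndcued} after committing a pebble to $x$: ensuring that Duplicator's bijections respect the chain $Z(G)\leq C_x,N_x\leq G$ requires Lemma~\ref{ResepctSubgroupChains}, and handling the interaction with maximal abelian direct factors will likely draw on the semi-abelian reduction Lemma~\ref{lem:semi-abelian-case}.
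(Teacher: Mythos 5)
Your proposal follows essentially the same route as the paper: pebble $x$ so that $C_x$ and $N_x$ become detectable relative to that pebble, invoke Lemma~\ref{DirectlyInducedFull} to translate fullness into the central decomposition $G=C_xN_x$ being directly induced, transport this via Lemma~\ref{DetectDirectlyIndcued} (which needs $k\geq 4$, hence $k\geq 5$ after spending a pebble on $x$), and then characterize ``full for a single factor'' by minimality of $N_x$ and ``full for the same collection'' by $C_x=C_y$. This is exactly the paper's argument, including the pebble count.

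One intermediate claim in your step (i) is false as stated and you should repair it: the individual components $N_i$ are \emph{not} $3$-WL$_\verstwo$-detectable subset selectors --- Lemma~\ref{PropertiesNonAbelianComponents} only asserts detectability of the union $M$. If an automorphism of $G$ permutes the components (e.g.\ $G=Q_8\times Q_8$), then elements with distinct signatures $S_x$ receive identical colors, so neither $C_G(N_i)$ nor the signature itself is detectable, contrary to what you write. What \emph{is} true, and all you need, is the automorphism-invariant binary relation: once a pebble pair sits on $(x,y)$, Duplicator must map $C_x$ onto $C_y$ and $N_x$ onto $N_y$ (this is how the paper phrases it), from which detectability of the relation $C_x=C_y$ on pairs, and of the minimality condition on $N_x$, follows. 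With that local fix the proof goes through; the appeals to Lemma~\ref{ResepctSubgroupChains} and Lemma~\ref{lem:semi-abelian-case} you anticipate at the end are not actually needed here.
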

\begin{proof}
	Let $x,y\in G$ and assume $x$ is full for $G_{j_1},\dots,G_{j_r}$.
	Using Lemma~\ref{AlgoVSGame}, we consider the $(k+1)$-pebble game with initial configuration
	$[(x,\perp^k),(y,\perp^k)]$. As long as there is a pebble pair on $(x,y)$, by Lemma~\ref{PropertiesNonAbelianComponents}, Duplicator has to map $C_x$ to $C_y$ and $N_x$ to $N_y$. If $x$ is not distinguished from $y$ then by Lemma~\ref{DetectDirectlyIndcued}, the central decomposition $G=C_yN_y$ has to be directly induced, since the same holds for $G=C_xN_x$ and $k\geq 5$. By Lemma~\ref{DirectlyInducedFull} the element $y$ is full as well.
	
	So let $F\subseteq G$ be the set of elements that are full for some collection of direct factors. We just showed that $F$ is detectable by $5$-WL$_\verstwo$. Note that for $x,y\in F$ it is easily detectable via WL-refinement that $x$ and $y$ are full for the same collection of direct factors since this is equivalent to $C_x=C_y$. It remains to show that elements that are full for only one direct factor can be distinguished from the rest of $F$. This follows from the fact that
	$x\in F$ is full for a single direct factor if and only if $C_x$ is minimal with respect to inclusion among $C_f$, $f\in F$ and this can be detected with $k\geq 3$.
\end{proof}

\begin{cor}
	If $G=G_1\times\dots\times G_d$ is a decomposition into
	indecomposable direct factors then $\bigcup_i G_iZ(G)$ is
	detected in $G$ by $5$-WL$_{{\verstwo}}$.
\end{cor}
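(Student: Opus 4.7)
The plan is to leverage the preceding lemma, which for $k\geq 5$ already detects the set $F_1\subseteq G$ of elements that are full for exactly one indecomposable direct factor, together with the relation $\sim$ on $F_1\times F_1$ identifying pairs that are full for the same direct factor. I will establish the characterization
\[
  x\in \bigcup_i G_iZ(G)\ \Longleftrightarrow\ \forall y_1,y_2\in F_1\colon\ ([x,y_1]\neq 1\wedge [x,y_2]\neq 1)\Rightarrow y_1\sim y_2.
\]

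For the forward direction, if $x\in G_iZ(G)$ and $y\in F_1$ with $[x,y]\neq 1$, then $y\notin G_jZ(G)$ for any $j\neq i$, because $G_jZ(G)$ and $G_iZ(G)$ centralize each other in the direct product; hence $y\in G_iZ(G)$, and since any element full for an abelian direct factor would lie in $Z(G)$ (and commute with $x$), $y$ must be full for the non-abelian factor $G_i$. Two such $y$'s are therefore $\sim$-equivalent. For the converse direction, I would first prove that $\langle F_j\rangle=G_jZ(G)$ for each non-abelian $G_j$: for any non-central $n\in N_k$ with $k\in I_j$, picking non-central $n_{k'}\in N_{k'}$ for $k'\in I_j\setminus\{k\}$ and taking the ratio of the two full elements $n\prod_{k'}n_{k'}$ and $n'\prod_{k'}n_{k'}$ with $n'\in N_k$ non-central yields $nn'^{-1}$, and by varying $n,n'$ one recovers all of $N_k$ since distinct $N_{k'}$ commute. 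Under the cluster hypothesis on $x$, either $x$ commutes with all of $F_1$ and hence with $\prod_{j\text{ non-ab}}G_jZ(G)$, which together with the fact that abelian direct factors sit inside $Z(G)$ forces $x\in Z(G)\subseteq G_iZ(G)$; or there exists $y\in F_1$ full for some single $G_i$ with $[x,y]\neq 1$, and the hypothesis then gives $x\in \bigcap_{j\neq i}C_G(\langle F_j\rangle) = \bigcap_{j\neq i}C_G(G_jZ(G))=G_iZ(G)$.

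For detectability, define
\[
  B:=\{(x,y_1,y_2)\in G^{(3)}\mid y_1,y_2\in F_1,\ [x,y_1]\neq 1,\ [x,y_2]\neq 1,\ y_1\not\sim y_2\}.
\]
Each defining property is visible in the $3$-tuple coloring induced by $5$-WL$_\verstwo$: membership in $F_1$ is a $1$-tuple color property, commutation is encoded in the initial $2$-tuple coloring via the multiplication relation, and $\sim$ is encoded in the $2$-tuple coloring by the previous lemma. So $B$ is a union of color classes of $3$-tuples. The characterization above rewrites as $x\in\bigcup_i G_iZ(G)$ iff no pair $(y_1,y_2)$ makes $(x,y_1,y_2)$ lie in $B$. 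Applying Lemma~\ref{DetectTupleCoords} to $5$-tuples of the form $(x,y_1,y_2,1,1)$ (using that the identity has a unique $1$-tuple color, so any matching $5$-tuple is similarly padded and reduces to a comparison of $3$-tuple colors) then separates $\bigcup_i G_iZ(G)$ from its complement via $5$-WL$_\verstwo$.

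The main obstacle I anticipate is the generation statement $\langle F_j\rangle=G_jZ(G)$ combined with the correct handling of abelian direct factors: the definition of ``full'' for an abelian $G_j$ (where $I_j=\emptyset$) collapses onto centrality, so whether central elements appear in $F_1$ depends on how many indecomposable abelian direct factors $G$ has, and the $\sim$-cluster argument must continue to correctly place $x$ inside $\bigcup_i G_iZ(G)$ in all such corner cases; checking that central $x$ is always captured by the first branch of the converse direction is the most delicate bookkeeping step.
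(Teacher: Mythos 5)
Your proof is correct in outline but takes a genuinely different, and considerably more roundabout, route than the paper. The paper's proof is a two-line consequence of the machinery already in place: for each element $g$ that is full for a single factor, the subgroup $N_g=\prod_{[g,N_i]\neq\{1\}}N_i$ \emph{is} $G_iZ(G)$ for some $i$ (Lemma~\ref{DirectlyInducedFull}), every $i$ is realized by some full element (Observation~\ref{obs:existence_full_elts}), and each $N_g$ is a detectable subgroup built from the detectable non-abelian components via Lemma~\ref{lem:basicClosure}; the union over the detectable set of such $g$ is then detectable. You instead construct an explicit first-order-style membership test for $\bigcup_i G_iZ(G)$ in terms of commutation with elements of $F_1$ and the equivalence $\sim$, and encode it as a union of $3$-tuple color classes fed into Lemma~\ref{DetectTupleCoords}. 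Your characterization is valid (I checked both directions, including the corner case you flag: elements full for an abelian factor are central, hence never trigger the condition $[x,y]\neq 1$, so they are irrelevant to the test, and central $x$ always lands in the first branch of your converse), and the encoding step is sound. What your approach buys is an explicit logical description of the set; what it costs is that you re-derive by hand information that Lemma~\ref{PropertiesNonAbelianComponents} and Lemma~\ref{DirectlyInducedFull} already package as detectable subgroups.

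One step is under-justified: the generation claim $\langle F_j\rangle=G_jZ(G)$. Your ratio trick only shows that $\langle F_j\rangle$ contains all quotients $nn'^{-1}$ of non-central $n,n'\in N_k$, and ``by varying $n,n'$ one recovers all of $N_k$'' does not follow immediately, since such ratios a priori generate only a subgroup of $N_k$. The gap is closable: every non-central $u\in N_k$ fails to commute with some non-central $v\in N_k$ (because $G=N_1\cdots N_m$ is a central product and the other components centralize $u$), the product $uv$ is then again non-central (as $[uv,v]=[u,v]\neq 1$), so $u=(uv)v^{-1}$ is itself such a ratio; together with $Z(G)\leq\langle F_j\rangle$ (obtained from ratios $n(nz^{-1})^{-1}=z$) and the fact that $N_k$ is generated by its non-central elements, this gives $N_k\leq\langle F_j\rangle$ for every $k\in I_j$. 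With that repair, and noting that for the centralizer computation in your Case~2 you only actually need $C_G(F_j)=C_G(G_jZ(G))$, your argument goes through.
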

\begin{proof}
	For $k\geq 5$, by the previous result $k$-WL distinguishes elements that are full for one fixed direct factor from other elements. Call the set of full elements $\mathcal{F}$.
	Then for each $g\in\mathcal{F}$ it holds that $N_g$ is of the form $G_iZ(G)$ for
	some $i$ and by Observation~\ref{obs:existence_full_elts} each $i$ occurs through some
	full element of $G$.
	Thus, via Lemma~\ref{lem:basicClosure}, the union $\bigcup_{g\in\mathcal{F}}N_g=\bigcup_i G_iZ(G)$ is $5$-WL$_\verstwo$-detectable.
\end{proof}

\begin{thm}
	Let $G=G_1\times\dots\times G_d$ be a decomposition into
	indecomposable direct factors and $k\geq 5$. If $G\equiv^{{\verstwo}}_k H$ then there are
	indecomposable direct factors $H_i\leq H$ such that $H=H_1\times\dots\times H_d$ and $G_i\equiv^{\verstwo}_{k-1} H_i$ for all $i$. Moreover
	$G$ and $H$ have isomorphic maximal abelian direct factors and
	$G_iZ(G)\equiv^{{\verstwo}}_k H_iZ(H)$. 
\end{thm}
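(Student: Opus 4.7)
My plan is to combine the three main ingredients developed in this section. The ``moreover'' clause on isomorphic maximal abelian direct factors is immediate from Lemma~\ref{WLSplitMAximalAbelian}. For the decomposition of $H$, I will use: (a)~Lemma~\ref{DetectDirectlyIndcued}, which converts detectability of two direct-factor pieces of $G$ into a direct decomposition of $H$ and yields $G_iZ(G)\equiv^{\verstwo}_k H_iZ(H)$; (b)~the lemma preceding the Corollary, which shows that ``full for the same collection of direct factors'' is a $5$-WL$_{\verstwo}$-detectable equivalence relation whose classes on full-for-a-single-factor elements correspond to the non-abelian indecomposable pieces $G_iZ(G)$; and (c)~Lemma~\ref{lem:semi-abelian-case} (together with Lemma~\ref{CompareVersions} for the version bookkeeping) to extract $G_i\equiv^{\verstwo}_{k-1} H_i$ from $G_iZ(G)\equiv^{\verstwo}_k H_iZ(H)$.

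The execution proceeds as follows. I would first write $G=G^\circ\times A$ and $H=H^\circ\times A'$ with $A,A'$ the maximal abelian direct factors, so that $A\cong A'$ by Lemma~\ref{WLSplitMAximalAbelian} and its canonical cyclic $p$-group decomposition matches the abelian indecomposable factors on both sides. I would then group the non-abelian indecomposable factors of $G$ into blocks of mutually WL-indistinguishable factors. For each block, the union of the corresponding $G_iZ(G)$'s is a union of WL-color classes of full elements and is therefore $5$-WL$_{\verstwo}$-detectable; its direct complement is the centralizer in $G$ of this union, which is detectable by Lemma~\ref{lem:basicClosure}. Iteratively applying Lemma~\ref{DetectDirectlyIndcued} to split off one block at a time yields a decomposition of $H^\circ$ into factors matching these blocks, with matching stable colors. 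Within each block the factors are pairwise isomorphic, so Krull--Remak--Schmidt (Lemma~\ref{lem:uniquenes:of:direct:factors}) produces indecomposable direct factors $H_i\leq H$ matching the $G_i$'s up to permutation. Finally, for each pair $G_iZ(G)\equiv^{\verstwo}_k H_iZ(H)$ produced by Lemma~\ref{DetectDirectlyIndcued}, I would apply Lemma~\ref{lem:semi-abelian-case}, noting that $G_i$ is the non-abelian complement of the abelian group $\prod_{j\neq i}Z(G_j)$ inside $G_iZ(G)$ (since $G_i$ is non-abelian indecomposable and hence admits no abelian direct factor), to obtain $G_i\equiv^{\verstwo}_{k-1} H_i$.

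The main obstacle will be that individual $G_iZ(G)$ need not be WL-detectable as subsets of $G$ whenever several $G_i$'s are pairwise isomorphic: WL cannot single out one member of a family of isomorphic direct factors. The remedy is to work at the level of WL-equivalence blocks rather than individual factors, since the union over such a block remains detectable even when its constituents are not, and Lemma~\ref{DetectDirectlyIndcued} needs only this coarser information to split $H$ accordingly; the finer separation inside each block is then handled classically by Lemma~\ref{lem:uniquenes:of:direct:factors}. A secondary subtlety concerns version bookkeeping: Lemma~\ref{lem:semi-abelian-case} is formulated for Version~$\versone$, while the theorem demands Version~$\verstwo$, so Lemma~\ref{CompareVersions} has to be invoked carefully at both ends of the argument so that the WL-dimension budget lands precisely at Version~$\verstwo$ and dimension $k-1$.
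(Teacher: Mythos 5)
Your overall architecture matches the paper's (detect $\bigcup_i G_iZ(G)$ via full elements, split $H$ with Lemma~\ref{DetectDirectlyIndcued}, strip the center with Lemma~\ref{lem:semi-abelian-case}, handle the abelian part with Lemma~\ref{WLSplitMAximalAbelian}), but there is a genuine gap exactly at the point you identify as the main obstacle: separating indistinguishable factors \emph{within} a block. Your remedy --- split off blocks of mutually indistinguishable factors and then invoke Krull--Remak--Schmidt inside each block --- does not deliver the conclusion. First, the assertion that the factors within a block are ``pairwise isomorphic'' is unjustified: mutual $k$-WL-indistinguishability is a priori weaker than isomorphism (if it were not, the paper's underlying open problem would be settled). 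Second, and more importantly, Lemma~\ref{lem:uniquenes:of:direct:factors} only gives uniqueness of the decomposition of the block factor $H_B$ of $H$; it says nothing about how many non-abelian indecomposable factors $H_B$ has, nor does it produce the required pairing with $G_iZ(G)\equiv^{\verstwo}_k H_iZ(H)$ for each individual $i\in B$, which is what you need before Lemma~\ref{lem:semi-abelian-case} can even be applied factor by factor. The paper individuates the factors differently: it uses that the non-commuting graph of a non-abelian group minus its center is connected (Lemma~\ref{lem:noncom-connected}), so the non-singleton connected components of $\Gamma_G$ restricted to the detectable set $\bigcup_i G_iZ(G)$ are precisely the individual sets $G_iZ(G)\setminus Z(G)$, and indistinguishability of $G$ and $H$ forces a color-preserving bijection between these components and those of $\Gamma_H$ restricted to $\bigcup_j H_jZ(H)$. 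This pairs up individual factors even when several are isomorphic or indistinguishable, which is the step your proposal is missing; you would need to add this (or an equivalent use of the detectable relation $C_x=C_y$ on full-for-a-single-factor elements, pushed through a Duplicator strategy) to close the argument.

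One smaller point: your worry about version bookkeeping is legitimate --- Lemma~\ref{lem:semi-abelian-case} is stated for Version \versone, and the implications in Lemma~\ref{CompareVersions} run in the direction $\equiv^{\versone}_{k}\Rightarrow\equiv^{\verstwo}_{k-1}$, so converting $G_iZ(G)\equiv^{\verstwo}_k H_iZ(H)$ into $G_i\equiv^{\verstwo}_{k-1}H_i$ requires either rerunning the semi-abelian argument for Version \verstwo\ or accepting a loss of one dimension. The paper's own proof glosses over this conversion, so flagging it without resolving it is acceptable here, but in a complete write-up you would have to commit to one of the two fixes.
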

\begin{proof}
	Since $\mathcal{F}_G:=\bigcup_i  G_iZ(G)$ is $5$-WL$_\verstwo$-detectable, the group $H$ must be decomposable into indecomposable direct factors $H=\times_j H_j$ such that $\mathcal{F}_H=\bigcup_j  H_jZ(H)\subseteq H$ is indistinguishable from $\mathcal{F}_G$. Consider the non-commuting graphs of $G$ and $H$ induced on these sets and recall that non-commuting graphs of non-abelian groups are connected (Lemma~\ref{lem:noncom-connected}). Since different direct factors in a fixed decomposition centralize each other, we obtain that for each non-singleton connected component $K$ of $\Gamma_G[\mathcal{F}_G]$ there exists a unique indecomposable direct factor $G_i$ such that $K=G_iZ(G)\setminus Z(G)$ and thus $\langle K\rangle=G_iZ(G)$. Again by Lemma~\ref{lem:noncom-connected}, all non-abelian direct factors appear in this way.
	
	The same holds for $H$ and so if $G$ is not distinguishable from $H$, there must be a bijection between the components of $\Gamma_G[\mathcal{F}_G]$ and $\Gamma_H[\mathcal{F}_H]$, such that the subgroups generated by corresponding components are indistinguishable via $5$-WL$_\verstwo$. This defines a correspondence $G_iZ(G)\equiv^{\verstwo}_k H_iZ(H)$ after reordering the factors of $H$ in an appropriate way. 
	From Lemma~\ref{lem:semi-abelian-case} it follows that $G_i\equiv^{\verstwo}_{k-1} H_i$.
	By Lemma~\ref{lem:iso_type_max_abelian_factor}, $G$ and $H$ must have isomorphic maximal abelian direct factors, so for abelian factors we even have $G_i\cong H_i$.
\end{proof}

\section{Conclusion}
We studied the Weisfeiler-Leman dimension of numerous isomorphism invariants of groups, showing that a low dimensional WL algorithm in fact captures a plethora of isomorphism invariants, characteristic subgroups, and group properties classic to algorithmic group theory. 
Particularly tricky was the treatment of direct indecomposable factors, for which we had to circumvent the fact that the they do not correspond to canonical substructures of the groups. Our techniques lead us to a canonical maximal central decomposition.

The observation that many efficiently computable isomorphism invariants are captured by a low dimensional WL algorithm raises the question whether there are actually invariants that are not captured at all.
Here we should emphasize that it is an open problem whether some fixed dimension of WL represents a complete invariant. The question is equivalent to the well-known open question whether the Weisfeiler-Leman dimension of groups is bounded in general (stated explicitly in~\cite{WLonGroups}).

For this open question, our results show that it suffices to consider directly indecomposable groups. We wonder whether there are other, similar reductions to confine the search for groups of high WL-dimension.

\bibliographystyle{plain}
\bibliography{refs}

\begin{thebibliography}{10}

\bibitem{Abdollahi_2006}
Alireza Abdollahi, Saieed Akbari, and Hamid~R. Maimani.
\newblock Non-commuting graph of a group.
\newblock {\em J. Algebra}, 298(2):468--492, apr 2006.

\bibitem{DBLP:journals/jcss/ArvindFKV20}
Vikraman Arvind, Frank Fuhlbr{\"{u}}ck, Johannes K{\"{o}}bler, and Oleg
  Verbitsky.
\newblock On {W}eisfeiler-{L}eman invariance: Subgraph counts and related graph
  properties.
\newblock {\em J. Comput. Syst. Sci.}, 113:42--59, 2020.

\bibitem{DBLP:journals/jsc/BescheE99}
Hans~Ulrich Besche and Bettina Eick.
\newblock Construction of finite groups.
\newblock {\em J. Symb. Comput.}, 27(4):387--404, 1999.

\bibitem{WLonGroups}
Jendrik Brachter and Pascal Schweitzer.
\newblock On the {Weisfeiler-Leman} dimension of finite groups.
\newblock In {\em Proceedings of the 35th Annual ACM/IEEE Symposium on Logic in
  Computer Science}, LICS '20, page 287–300, New York, NY, USA, 2020.
  Association for Computing Machinery.

\bibitem{DBLP:journals/corr/abs-1905-02518}
Peter~A. Brooksbank, Joshua~A. Grochow, Yinan Li, Youming Qiao, and James~B.
  Wilson.
\newblock Incorporating {Weisfeiler-Leman} into algorithms for group
  isomorphism.
\newblock {\em CoRR}, abs/1905.02518, 2019.

\bibitem{doi:10.1080/14786445408647421}
Arthur Cayley.
\newblock On the theory of groups, as depending on the symbolic
  equation~$\theta^n =1$.
\newblock {\em The London, Edinburgh, and Dublin Philosophical Magazine and
  Journal of Science}, 7(42):40--47, 1854.

\bibitem{CHANG20221}
Mun~See Chang and Christopher Jefferson.
\newblock Disjoint direct product decompositions of permutation groups.
\newblock {\em J. Symb. Comput.}, 108:1--16, 2022.

\bibitem{MR1358262}
Francesca Dalla~Volta and Andrea Lucchini.
\newblock Generation of almost simple groups.
\newblock {\em J. Algebra}, 178(1):194--223, 1995.

\bibitem{DBLP:conf/csr/DasS19}
Bireswar Das and Shivdutt Sharma.
\newblock Nearly linear time isomorphism algorithms for some nonabelian group
  classes.
\newblock In Ren{\'{e}} van Bevern and Gregory Kucherov, editors, {\em Computer
  Science - Theory and Applications - 14th International Computer Science
  Symposium in Russia, {CSR} 2019, Novosibirsk, Russia, July 1-5, 2019,
  Proceedings}, volume 11532 of {\em Lecture Notes in Computer Science}, pages
  80--92. Springer, 2019.

\bibitem{DietrichWilson}
Heiko Dietrich and James~B. Wilson.
\newblock Polynomial-time isomorphism testing of groups of most finite orders.
\newblock {\em CoRR}, abs/1806.08872, 2018.
\newblock arXiv.

\bibitem{MR1409812}
John~D. Dixon and Brian Mortimer.
\newblock {\em Permutation groups}, volume 163 of {\em Graduate Texts in
  Mathematics}.
\newblock Springer-Verlag, New York, 1996.

\bibitem{MR3197178}
Bettina Eick and Max Horn.
\newblock The construction of finite solvable groups revisited.
\newblock {\em J. Algebra}, 408:166--182, 2014.

\bibitem{Eick2017}
Bettina Eick, Max Horn, and Alexander Hulpke.
\newblock Constructing groups of `small' order: Recent results and open
  problems.
\newblock In Gebhard B{\"o}ckle, Wolfram Decker, and Gunter Malle, editors,
  {\em Algorithmic and Experimental Methods in Algebra, Geometry, and Number
  Theory}, pages 199--211. Springer International Publishing, Cham, 2017.

\bibitem{MR1674742}
Sergei Evdokimov and Ilia Ponomarenko.
\newblock On highly closed cellular algebras and highly closed isomorphisms.
\newblock {\em Electron. J. Combin.}, 6:Research Paper 18, 31, 1999.

\bibitem{GAP4}
The GAP~Group.
\newblock {\em {GAP -- Groups, Algorithms, and Programming, Version 4.11.1}},
  2021.

\bibitem{DBLP:conf/innovations/GrochowQ21}
Joshua~A. Grochow and Youming Qiao.
\newblock On the complexity of isomorphism problems for tensors, groups, and
  polynomials {I:} tensor isomorphism-completeness.
\newblock In James~R. Lee, editor, {\em 12th Innovations in Theoretical
  Computer Science Conference, {ITCS} 2021, January 6-8, 2021, Virtual
  Conference}, volume 185 of {\em LIPIcs}, pages 31:1--31:19. Schloss Dagstuhl
  - Leibniz-Zentrum f{\"{u}}r Informatik, 2021.

\bibitem{MR2228653}
Robert Guralnick, Boris Kunyavski\u{\i}, Eugene Plotkin, and Aner Shalev.
\newblock Thompson-like characterizations of the solvable radical.
\newblock {\em J. Algebra}, 300(1):363--375, 2006.

\bibitem{hall1999theory}
Marshall Hall.
\newblock {\em The Theory of Groups}.
\newblock AMS Chelsea Publishing Series. AMS Chelsea Pub., 1999.

\bibitem{DBLP:journals/iandc/Immerman86}
Neil Immerman.
\newblock Relational queries computable in polynomial time.
\newblock {\em Inf. Control.}, 68(1-3):86--104, 1986.

\bibitem{DBLP:journals/jcss/Kavitha07}
Telikepalli Kavitha.
\newblock Linear time algorithms for abelian group isomorphism and related
  problems.
\newblock {\em J. Comput. Syst. Sci.}, 73(6):986--996, 2007.

\bibitem{DBLP:conf/icalp/KayalN09}
Neeraj Kayal and Timur Nezhmetdinov.
\newblock Factoring groups efficiently.
\newblock In Susanne Albers, Alberto Marchetti{-}Spaccamela, Yossi Matias,
  Sotiris~E. Nikoletseas, and Wolfgang Thomas, editors, {\em Automata,
  Languages and Programming, 36th International Colloquium, {ICALP} 2009,
  Rhodes, Greece, July 5-12, 2009, Proceedings, Part {I}}, volume 5555 of {\em
  Lecture Notes in Computer Science}, pages 585--596. Springer, 2009.

\bibitem{KieferThesis}
Sandra Kiefer.
\newblock {\em Power and limits of the Weisfeiler-Leman algorithm}.
\newblock PhD thesis, RWTH Aachen University, 2020.

\bibitem{DBLP:journals/corr/Luks15}
Eugene~M. Luks.
\newblock Group isomorphism with fixed subnormal chains.
\newblock {\em CoRR}, abs/1511.00151, 2015.

\bibitem{MIRSKY1966520}
Leonid Mirsky and Hazel Perfect.
\newblock Systems of representatives.
\newblock {\em Journal of Mathematical Analysis and Applications},
  15(3):520--568, 1966.

\bibitem{DBLP:journals/jsc/OBrien90}
Eamonn~A. O'Brien.
\newblock The p-group generation algorithm.
\newblock {\em J. Symb. Comput.}, 9(5/6):677--698, 1990.

\bibitem{DBLP:journals/corr/abs-1304-3935}
David~J. Rosenbaum.
\newblock Bidirectional collision detection and faster deterministic
  isomorphism testing.
\newblock {\em CoRR}, abs/1304.3935, 2013.
\newblock arXiv.

\bibitem{DBLP:conf/soda/Rosenbaum13}
David~J. Rosenbaum.
\newblock Breaking the n\({}^{\mbox{log n}}\) barrier for solvable-group
  isomorphism.
\newblock In Sanjeev Khanna, editor, {\em Proceedings of the Twenty-Fourth
  Annual {ACM-SIAM} Symposium on Discrete Algorithms, {SODA} 2013, New Orleans,
  Louisiana, USA, January 6-8, 2013}, pages 1054--1073. {SIAM}, 2013.

\bibitem{SmithThesis}
Michael~J. Smith.
\newblock {\em Computing automorphisms of finite soluble groups}.
\newblock PhD thesis, Australian National University, 1995.

\bibitem{DBLP:conf/stoc/Vardi82}
Moshe~Y. Vardi.
\newblock The complexity of relational query languages (extended abstract).
\newblock In Harry~R. Lewis, Barbara~B. Simons, Walter~A. Burkhard, and
  Lawrence~H. Landweber, editors, {\em Proceedings of the 14th Annual {ACM}
  Symposium on Theory of Computing, May 5-7, 1982, San Francisco, California,
  {USA}}, pages 137--146. {ACM}, 1982.

\bibitem{SimpleBySpectrum}
A.~V. Vasil’ev, M.~A. Grechkoseeva, and V.~D. Mazurov.
\newblock Characterization of the finite simple groups by spectrum and order.
\newblock {\em Algebra and Logic}, 48:385--409, 2009.

\bibitem{WilsonDirect}
James~B. Wilson.
\newblock Finding direct product decompositions in polynomial time.
\newblock {\em CoRR}, abs/1005.0548, 2010.
\newblock arXiv.

\bibitem{zassenhaus1999theory}
Hans Zassenhaus.
\newblock {\em The Theory of Groups}.
\newblock Dover books on mathematics. Dover Publications, 1999.

\end{thebibliography}

\end{document}